\documentclass[hidelinks,onefignum,onetabnum]{siamart220329}


\usepackage{caption, subcaption}
\usepackage{rotating}
\usepackage{comment}
\usepackage{tikz}
\usepackage{xcolor}
\usepackage{multirow}

\renewcommand{\S}{\mathbb{S}}
\newcommand{\R}{\mathbb{R}}
\newcommand{\inprod}[2]{\left\langle #1, #2 \right\rangle}
\DeclareMathOperator{\argmin}{argmin}
\DeclareMathOperator{\diag}{diag}
\newcommand{\norm}[1]{\left\lVert #1 \right\rVert}

\newcommand{\blue}[1]{\textcolor{blue}{#1}}



\usepackage{lipsum}
\usepackage{amsfonts}
\usepackage{graphicx}
\usepackage{epstopdf}
\usepackage{algorithmic}
\ifpdf
  \DeclareGraphicsExtensions{.eps,.pdf,.png,.jpg}
\else
  \DeclareGraphicsExtensions{.eps}
\fi


\newsiamremark{remark}{Remark}
\newsiamremark{hypothesis}{Hypothesis}
\crefname{hypothesis}{Hypothesis}{Hypotheses}
\newsiamthm{claim}{Claim}

\headers{Nesterov's Accelerated Jacobi-Type Methods}{L. Liang, Q. Pang, K.-C. Toh and H. Yang}

\title{Nesterov's Accelerated Jacobi-Type Methods for Large-scale Symmetric Positive Semidefinite Linear Systems}

\author{Ling Liang\thanks{Department of Mathematics, The University of Tennessee, Knoxville, TN 37916, U.S.A. 
  (\email{liang.ling@u.nus.edu}).}
\and Qiyuan Pang\thanks{Department of Mathematics, Purdue University, IN 47907, U.S.A. 
  (\email{qpang413@gmail.com}).}
\and Kim-Chuan Toh\thanks{Department of Mathematics and Institute of Operations Research and Analytics, National University of Singapore, Singapore 119076
	(\email{mattohkc@nus.edu.sg}).}
\and Haizhao Yang\thanks{Department of Mathematics and Department of Computer Science, University of Maryland, College Park, MD 20742, U.S.A.
	(\email{hzyang@umd.edu}).}}

\usepackage{amsopn}


\ifpdf
\hypersetup{
  pdftitle={Nesterov's Accelerated Jacobi-Type Methods},
  pdfauthor={L. Liang, Q. Pang, K.-C. Toh and H. Yang}
}
\fi




\begin{document}

\maketitle

\begin{abstract}
Solving symmetric positive semidefinite linear systems is an essential task in many scientific computing problems. While Jacobi-type methods, including the classical Jacobi method and the weighted Jacobi method, exhibit simplicity in their forms and friendliness to parallelization, they are not attractive either because of the potential convergence failure or their slow convergence rate. This paper aims to showcase the possibility of improving classical Jacobi-type methods by employing Nesterov's acceleration technique that results in an accelerated Jacobi-type method with improved convergence properties. Simultaneously, it preserves the appealing features for parallel implementation. In particular, we show that the proposed method has an $O\left(t^{-2}\right)$ convergence rate in terms of objective function values of the associated convex quadratic optimization problem, where $t\geq 1$ denotes the iteration counter. To further improve the practical performance of the proposed method, we also develop and analyze a restarted variant of the method, which is shown to have an $O\left((\log_2(t))^2t^{-2}\right)$ convergence rate when the coefficient matrix is positive definite. Furthermore, we conduct appropriate numerical experiments to evaluate the efficiency of the proposed method. Our numerical results demonstrate that the proposed method outperforms the classical Jacobi-type methods and the conjugate gradient method, and shows a comparable performance as the preconditioned conjugate gradient method with a diagonal preconditioner. Finally, we develop a parallel implementation and conduct speed-up tests on some large-scale systems. Our results indicate that the proposed framework is highly scalable.
\end{abstract}

\begin{keywords}
Symmetric positive semidefinite linear systems, Jacobi-type iterative methods, Nesterov's acceleration, Parallelization
\end{keywords}

\begin{MSCcodes}
65F10, 65F08, 90C20
\end{MSCcodes}

\section{Introduction}

In this paper, we consider the following symmetric positive semidefinite (SPSD) system of linear equations:
\begin{equation}
	\label{eq-linear-sys}
	Qx = b, 
\end{equation}
where $Q\in \S^n$ is a real symmetric positive semidefinite matrix, and $b\in \R^n$ is the right-hand-side vector that is assumed to be in the range space of $Q$, i.e. linear system \eqref{eq-linear-sys} is assumed to be consistent. Notice that solving the linear system \eqref{eq-linear-sys} is equivalent to solving the following unconstrained convex quadratic programming (QP) problem:
	\begin{equation}
		\label{eq-qp}
		\min_{x\in \R^n}\; f(x):=\frac{1}{2}\inprod{x}{Qx} - \inprod{b}{x}.
\end{equation}

Solving SPSD linear systems is ubiquitous for obvious reasons in various fields of science and engineering. Many physical and engineering problems are naturally described by mathematical models involving SPSD matrices. For example, the Laplacian matrix of an 
undirected graph is SPSD, and it provides a highly effective mathematical framework for analyzing the structure and behavior of networks, graphs, and systems in a wide range of disciplines \cite{chung1997spectral, ng2001spectral, shi2000normalized, pang2024distributed, pang2023spectral}. Another representative problems is to solve and analyze an elliptic partial differential equation (PDE). The stiffness matrices associated with those PDEs from finite element analysis \cite{johnson2012numerical}, solid and structural mechanics \cite{zienkiewicz2005finite}, electrostatics \cite{sadiku2001electromagnetics, tu2022linear}, and heat and mass transfer problems \cite{incropera1996fundamentals}, are typically {SPSD}. Solving linear systems arising from these systems efficiently and reliably is crucial for exploring the behavior of these PDEs and making reliable predictions. Furthermore, {SPSD} matrices frequently arise in optimization problems and machine learning tasks, both of which are foundational tools for addressing a wide range of real-world applications \cite{boyd2004convex,hinze2008optimization,jensen2024nonsmooth}. In particular, algorithmic frameworks for solving constrained and unconstrained optimization problems \cite{boyd2004convex,hinze2008optimization} often involve solving {SPSD} linear systems at their core. Efficient solution methods for such systems are essential for successful applications. 

For small to medium-sized problems, one can generally apply a direct method \cite{golub2013matrix}, including Cholesky factorization and LU decomposition methods, to obtain solutions with high precision. Moreover, it is generally robust even if the  
matrix is ill-conditioned or nearly singular. However, as the computational complexity and memory consumption of a direct method are typically $O(n^3)$ and $O(n^2)$, respectively\footnote{The complexity could be smaller when the matrix $Q$ is sparse but fill-ins would be introduced in general.}, it can become prohibitively expensive
for solving large-scale linear systems. On the other hand, iterative methods \cite{saad2003iterative,greenbaum1997iterative} require much smaller per-iteration computational complexity and are much more memory efficient, which make them particularly suitable for solving large-scale and/or sparse systems. Some notable iterative methods include Krylov subspace methods, classical Jacobi method, Gauss-Seidel method, (symmetric) successive over-relaxation method, Kaczmarz method, and their variants \cite{golub2013matrix,jacobi1846leichtes,young2014iterative,greenbaum1997iterative,karczmarz1937angenaherte,strohmer2009randomized}, to mention just a few. Iterative methods can also be terminated to achieve different levels of accuracy, which is an advantage when highly accurate solutions are not necessary. However, the convergence of iterative methods is sensitive to the spectral property (and hence the conditioning) of the coefficient matrix, and they generally require finding effective preconditioners to speed up the convergence, especially when the matrix is ill-conditioned. Furthermore, they may require too many iterations or even fail to reach a solution with a desired precision under the influence of rounding errors. But despite the shortcomings, iterative methods remain a powerful tool, and sometimes the only tool, for solving large-scale linear systems. 

For resource-intensive tasks, it is desirable to develop parallel/distributed algorithms enabling efficient processing of large data sets, improving numerical performance, and providing fault tolerance and {high applicability} in various application domains \cite{bertsekas2015parallel}. There exist highly efficient parallel implementations of direct methods (see, e.g., \cite{schenk2001pardiso,li2005overview}) that allow for significant speedup on modern parallel computing architectures. By solving large linear systems on clusters or multicore processors, one can enhance the applicability of direct methods. However, iterative methods are more attractive than direct methods in this scenario since they are less demanding in terms of memory consumption and require significantly less computational resources, especially when highly accurate solutions are not necessary. Some iterative methods, such as the Gauss-Seidel method and successive over-relaxation method, inherently follow a sequential process, which may limit their advantages in parallel or distributed computing platforms. On the other hand, certain iterative algorithms, such as the CG method, Jacobi method, and  weighted/damped Jacobi method, are more attractive since they are directly parallelizable \cite{saad2003iterative}. The parallel/distributed implementation of iterative methods for solving linear systems has been active for decades and plays a special role in today's big data era. Examples include the parallel implementation KSPCG in PETSC \cite{petsc-user-ref, petsc-efficient} for Preconditioned CG (PCG) and Parallel Genetic Algorithms (PGAs) \cite{tanese1987parallel, fogarty1991implementing, cantu1998survey, cantu2000efficient, harada2020parallel} which are parallel implementations of the Jacobi-type methods, to mention just a few. 

In this paper, we focus on Jacobi-type frameworks in order to take advantage of their simple forms and the ease of parallelization. It is well-known that the classical Jacobi method does not converge for general {SPSD} systems. It is only guaranteed to be {convergent} under strong conditions, such as strict diagonal dominance of $Q$ \cite{golub2013matrix, saad2003iterative}. A modification of the classical Jacobi method by considering a linear combination of the Jacobi update and the current iterate leads to the weighted Jacobi method. The latter method admits convergence for any symmetric positive definite system, but the weight needs to be chosen carefully \cite{saad2003iterative}. As we will also see in Section \ref{section-nesterov}, both Jacobi and weighted Jacobi methods can be viewed as special cases of the variable-metric gradient method for solving the associated convex QP problem, which requires suitable conditions for the selection of ''step sizes'' for convergence. However, both methods typically show slow convergence rates in practice (when they are convergent). 

We propose to accelerate the Jacobi-type methods by leveraging Nesterov's acceleration technique \cite{nesterov2013gradient, beck2009fast, monteiro2016adaptive, su2014differential} together with a carefully designed proximal term. The resulting algorithm has a similar updating rule as Jacobi-type methods, which allows efficient and simple parallel implementation, and it is provably convergent under mild conditions. In particular, the proposed method is shown to be {convergent} with an $O\left(\frac{1}{t^2}\right)$ convergence rate in terms of objective function values of the associated convex QP problem, where $t\geq 1$ denotes the iteration counter. Moreover, existing literature also suggests that a restarted scheme could improve the practical performance of the proposed method significantly. Hence, we also analyze the convergence properties of a restarted version of the algorithm and establish an $O\left(\frac{(\log_2 t)^2}{t^2}\right)$ convergence rate. Finally, our numerical experiments confirm the efficiency of the proposed algorithm and the scalability of its parallel implementation. Specifically, when applied to strictly diagonal dominant systems, our accelerated algorithm empirically outperforms both the classical Jacobi method and 
weighted Jacobi method with  optimal weight. Furthermore, in numerical tests involving Laplacian systems and general {symmetric positive definite} systems, our method demonstrates better empirical performance compared to the classical CG method, and shows comparable results as the preconditioned CG method using a simple diagonal preconditioner. Additionally, the speed-up tests for the parallel implementation of our method exhibit promising scalability. These combined theoretical and numerical results provide compelling evidence for the potential applicability of our algorithm in a wide range of real-world applications.

The rest of the paper is organized as follows. In Section \ref{section-nesterov}, we first review the classical Jacobi method and  weighted Jacobi method, which are special cases of the gradient method. Then, we attempt to accelerate and analyze the Jacobi-type methods by the Nesterov's acceleration technique in the same section. We also consider a restarted variant of the proposed method and analyze its convergence properties in the same section. In Section \ref{section-proximal-term}, we show how to select the proximal term in the proposed framework in order to preserve the simple updating rule as in the classical Jacobi methods. To evaluate the practical performance of the proposed methods, we conduct a set of numerical experiments in Section \ref{section-experiments}. Finally, we conclude our paper in Section \ref{section-conclusions}.

\textbf{Notation.}
The identity matrix in $\R^{n\times n}$ is denoted as $I_n$. For any matrix $A\in \R^{n\times n}$, suppose that its eigenvalues are real. We use $\lambda_{\textrm{max}}(A)$ and $\lambda_{\textrm{min}}(A)$ to denote its maximal and minimal eigenvalues, respectively. The spectral radius of the matrix $A$, i.e., the maximum of the absolute values of its eigenvalues, is denoted by $\rho(A)$. Suppose that $A\in \S_+^n$ (the set of symmetric positive semidefinite matrices), we denote $\norm{x}_A:=\sqrt{\inprod{x}{Ax}}$, for any $x\in \R^n$.

\section{The Nesterov's accelerated method} \label{section-nesterov}

In this section, we first revisit the Jacobi-type algorithmic frameworks and show that they are special cases of the general variable-metric gradient methods for solving the associated unconstrained convex QP 
problem. Hence, the convergence results from the optimization perspectives can be directly applied. Then, we consider the Nesterov's accelerated gradient method with provable convergence. We also present the convergence results of a restarted version of the former algorithm. We should mention that the analysis for the Nesterov's accelerated gradient method and its restarted variants is well-developed and mature in the literature; see e.g., \cite{bonnans1995family, beck2009fast, nesterov2013gradient, ito2017unified, li2019block, o2015adaptive, yang2023inexact, lee2022escaping, liang2022qppal} and references therein. However, existing analysis is mainly for a general convex optimization problem, whereas the focus of the present paper is an unconstrained convex QP problem. To make the paper self-contained, we also provide a concise analysis here.

To begin, let $Q$ be decomposed as $Q = D + L + L^T$ where $D \in \S^n$ and $L\in \R^{n\times n}$ capture the diagonal components and strictly lower triangular part of $Q$, respectively. Then, the classical Jacobi method performs the following update:
\begin{equation}
	\label{eq-jacobi}
	x^{t+1} = D^{-1}(b - (L+L^T)x^t) = x^t + D^{-1}(b - Qx^t),\quad \forall \; t\geq 0,
\end{equation}
where $t$ is the iteration counter. The standard condition based on the fixed-point theory for the Jacobi method to be convergent is $\rho(D^{-1}(L+L^T)) < 1$ where $\rho(\cdot)$ denotes the spectral radius \cite{golub2013matrix}. As a consequence, the Jacobi method is not always convergent for general SPSD systems given an
arbitrary initial point, as mentioned before. In order to derive convergent Jacobi-type methods, one may consider a linear combination between the current point $x^t$ and the point given by the Jacobi method:
\begin{equation}
	\label{eq-weighted-jacobi}
	x^{t+1} = \omega D^{-1}(b - (L+L^T)x^t) + (1-\omega) x^t = x^t + \omega D^{-1}(b-Qx^t), \quad \forall \; t\geq 0,
\end{equation}
where $\omega \in \left(0, \frac{2}{\lambda_{\textrm{max}}(D^{-1}Q)}\right) $. The above scheme is usually called the weighted Jacobi method, and it is convergent for any given initial point \cite{saad2003iterative} when solving {symmetric positive definite} systems. We can derive the optimal weight, denoted as $\omega_{\textrm{opt}}$ by minimizing the spectral radius $\rho(I-\omega D^{-1}Q)$ and it is given by 
\[
\omega_{\textrm{opt}} = \frac{2}{\lambda_{\textrm{min}}(D^{-1}Q) + \lambda_{\textrm{max}}(D^{-1}Q)}\in \left(0, \frac{2}{\lambda_{\textrm{max}}(D^{-1}Q)}\right).
\]
However, when $n$ is large, estimating $\lambda_{\textrm{max}}(D^{-1}Q)$ could be nontrivial and costly which limits the applicability of the algorithm in practice \cite{kimmel2019extensions}. Traditionally the weight is estimated  through trial and error \cite{saad2003iterative} and more recently through auto-tuning techniques \cite{imakura2012auto}.

{With respect to the QP problem \eqref{eq-qp},}
both updating rules in \eqref{eq-jacobi} and \eqref{eq-weighted-jacobi} can be reformulated as 
\begin{equation}
	\label{eq-gd}
	x^{t+1} = \argmin\left\{f(x^t)+ \inprod{\nabla f(x^t)}{x - x^t} + \frac{1}{2}\norm{x - x^{\blue{t}}}_{S}^2\;:\;x\in\R^n\right\},\quad \forall \; t\geq 0,
\end{equation}
where $S:=D$ for \eqref{eq-jacobi} and $S:=\frac{1}{\omega}D$ for \eqref{eq-weighted-jacobi}, provided that $D\in \S_{++}^n$ and $\omega>0$. Here, the term $\frac{1}{2}\norm{x - x^t}_S^2$ is usually called the proximal term. Note that the traditional gradient method corresponds to the setting $S = \frac{1}{\eta} I_n$, where $\eta > 0$ is the step size. Therefore, the above updating rule is a special case of the gradient method by changing the standard Euclidean metric to the metric induced by $S$. Hence, these types of methods are also known as {preconditioned} gradient methods. However, the gradient method specified by \eqref{eq-gd} is not always {convergent}. In other words, one needs to choose the matrix $S$ appropriately for convergence. For a given $S \in \S_{++}^n$, observe that the updating rule in \eqref{eq-gd} has the following form:
\[
x^{t+1} = (I_n - S^{-1}Q)x^t + S^{-1}b,\quad \forall\; t\geq 0.
\]
Clearly, a standard sufficient condition for the above scheme to be convergent is $\rho(I_n - S^{-1}Q) < 1$ \cite{saad2003iterative}. Since $S^{-1}Q$ is similar to the SPSD matrix $S^{-1/2}QS^{-1/2}$, we see that the latter condition is equivalent to 
\[
\lambda_{\textrm{max}}(S^{-1}Q) =\lambda_{\textrm{max}}(S^{-1/2}QS^{-1/2}) < 2,\quad \lambda_{\textrm{min}}(S^{-1}Q) =\lambda_{\textrm{min}}(S^{-1/2}QS^{-1/2}) > 0. 
\]
When $Q\in \S_{++}^n$, these conditions cover the sufficient conditions for the Jacobi and weighted Jacobi methods, respectively. Moreover, for the special case when $Q\in \S_{++}^n$ and $S = \frac{1}{\eta}I_n$, the above conditions reduce to $0 < \eta < \frac{2}{\lambda_{\textrm{max}}(Q)}$. If we set $\eta = \frac{2}{\lambda_{\textrm{max}}(Q) + \lambda_{\textrm{min}}(Q)}$, then it can be verified that 
\begin{align*}
	\norm{x^t - x^*} \leq &\; \left(\frac{\lambda_{\textrm{max}}(Q) - \lambda_{\textrm{min}}(Q)}{\lambda_{\textrm{max}}(Q)+\lambda_{\textrm{min}}(Q)}\right)^t\norm{x^0 - x^*}, \\
	f(x^t) - f(x^*) \leq &\; \frac{\lambda_{\textrm{max}}(Q)}{2}\left(\frac{\lambda_{\textrm{max}}(Q) - \lambda_{\textrm{min}}(Q)}{\lambda_{\textrm{max}}(Q)+\lambda_{\textrm{min}}(Q)}\right)^{2t}\norm{x^0 - x^*}^2,
\end{align*}
where $x^*$ is the unique solution of \eqref{eq-linear-sys}. The above convergence results coincide with those classical results in the field of optimization; see, e.g., \cite[Theorem 2.1.15]{nesterov2018lectures}. Unfortunately, when the condition number of $Q$, namely $\frac{\lambda_{\textrm{max}}(Q)}{\lambda_{\textrm{min}}(Q)}$, is large, the convergence rate could be rather slow. Moreover, when $Q$ is only SPSD, the above results are no longer applicable. In this case, we can still rely on the convergence results for convex optimization. For instance, suppose that $Q\in \S_{+}^n$ and $S = \frac{1}{\eta}I_n$, and $\eta = \frac{1}{\lambda_{\textrm{max}}(Q)}$, then 
\[
f(x^t) - f(x^*) \leq \frac{2\lambda_{\textrm{max}}(Q)\norm{x^0 - x^*}^2}{t+4},\quad \forall\; t\geq 0,
\]
which establishes an $O\left(\frac{1}{t}\right)$ convergence rate in terms of the objective value gap; see \cite[Corollary 2.1.2]{nesterov2018lectures} for more details. Note that the step size $\eta$ depends on $\lambda_{\textrm{max}}(Q)$ which requires additional computational cost to estimate.

The above discussion argues that the gradient method (which covers the Jacobi and weighted Jacobi methods) is generally a slow method and could be impractical. In order to speed up the gradient method, it is commonly recommended to apply the Nesterov's acceleration technique \cite{nesterov2013gradient, beck2009fast}. The template for the Nesterov's accelerated method for solving the minimization problem \eqref{eq-qp} is given in Algorithm \ref{alg-apg}. 

\begin{algorithm}[htb!]
	\begin{algorithmic}[1]
		\STATE \textbf{Input:} Initial points $x^{0} = y^1 \in \R^n$, $\alpha_0 = 0,\;\alpha_1 = 1$ and $S\in \S_+^n$.
		\FOR{$t\geq 1$}
		\STATE $x^{t} = \argmin\left\{\frac{1}{2}\inprod{x}{Qx} - \inprod{b}{x} + \frac{1}{2}\norm{x - y^t}_S^2\;:\; x\in \R^n\right\}$.
		\STATE $\alpha_{t+1} = \frac{1+\sqrt{1+4\alpha_t^2}}{2}$.
		\STATE $y^{t+1} = \left(1+\frac{\alpha_t - 1}{\alpha_{t+1}}\right)x^{t} - \frac{\alpha_t - 1}{\alpha_{t+1}}x^{t-1}$.
		\ENDFOR 
		\STATE \textbf{Output:} $x^{t}$
	\end{algorithmic}
	\caption{The Nesterov's accelerated method.}
	\label{alg-apg}
\end{algorithm}

The Nesterov's accelerated framework is recognized to be one of the most influential and efficient first-order methods in the literature. By the description of Algorithm \ref{alg-apg}, we see that the algorithm is easily applicable and simple to implement. Moreover, there are several major differences between Algorithm \ref{alg-apg} and the gradient method \eqref{eq-gd}. First, instead of using a linear approximation, i.e., $f(x^t) + \inprod{\nabla f(x^t)}{x - x^t}$, of the objective function $f$ at the current iterate $x^t$, we keep using the function $f$ that is in fact an exact second-order approximation. Second, the matrix $S$ is no longer required to be {positive definite}. Instead, only $S\in \S_+^n$ is required. Finally, the proximal term is changed to $\frac{1}{2}\norm{x - y^t}_S^2$ where $y^t$ depends on $x^{t-1}$ and $x^{t-2}$ which essentially adds a momentum term to the update of the gradient method \eqref{eq-gd} to achieve acceleration. The above features are some of the main reasons to achieve accelerated convergence rate, as we will see shortly.  

\subsection{Convergence properties}
We next analyze the convergence of the Nesterov's accelerated method, described in Algorithm \ref{alg-apg}. For simplicity, we denote 
\[
q(x,y):= f(x) + \frac{1}{2}\norm{x - y}^2_S,\quad \forall\; x,y\in \R^n.
\]
Clearly, since we require $S\in \S_+^n$, it holds that $f(x) \leq q(x,y)$ for any $x,\,y\in\R^n$. Moreover, since $b$ belongs to the range space of $Q$, $f^*:=\min_{x\in\R^n} f(x)$ is finite and attainable at a certain optimal solution $x^*$.

By the fact that $f(x)\leq q(x,y)$ for any $x,y\in\R^n$, we derive the following lemma that is useful to evaluate the function value of $f$. Note that the lemma is a simplified version of \cite[Lemma 2.3]{beck2009fast} which can be proven in a straightforward manner.
\begin{lemma} \label{lemma-diff-f}
	For any $x\in \R^n$, it holds that 
	\[
	f(x) - f(x^{t}) \geq \frac{1}{2}\inprod{x^{t} - y^t}{S(x^{t} - y^t)} + \inprod{y^t - x}{S(x^{t} - y^t)},\quad t\geq 1.
	\]
\end{lemma}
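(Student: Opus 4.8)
The plan is to combine the first-order optimality condition for the subproblem that defines $x^t$ with the exact second-order (hence convex) structure of the quadratic $f$. Since $S\in\S_+^n$, the subproblem objective $q(\cdot,y^t)$ in Algorithm~\ref{alg-apg} is a convex quadratic whose Hessian is $Q+S\in\S_+^n$; using that $b$ lies in the range of $Q$ one verifies that $b-Sy^t$ lies in the range of $Q+S$, so the minimum is attained and $x^t$ satisfies the stationarity condition
\[
\nabla f(x^t) + S(x^t - y^t) = 0, \qquad \text{equivalently} \qquad \nabla f(x^t) = -\,S(x^t - y^t).
\]

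Next I would expand $f$ around $x^t$. Because $f$ is quadratic with Hessian $Q\succeq 0$, for every $x\in\R^n$,
\[
f(x) = f(x^t) + \inprod{\nabla f(x^t)}{x - x^t} + \frac{1}{2}\inprod{x - x^t}{Q(x - x^t)} \;\geq\; f(x^t) + \inprod{\nabla f(x^t)}{x - x^t},
\]
and substituting the optimality condition gives
\[
f(x) - f(x^t) \;\geq\; \inprod{-\,S(x^t - y^t)}{x - x^t} \;=\; \inprod{S(x^t - y^t)}{x^t - x}.
\]

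Finally I would write $x^t - x = (x^t - y^t) + (y^t - x)$ and expand the $S$-inner product, which yields
\[
f(x) - f(x^t) \;\geq\; \inprod{x^t - y^t}{S(x^t - y^t)} + \inprod{y^t - x}{S(x^t - y^t)},
\]
and then replace the first term by half of itself, using $\inprod{x^t - y^t}{S(x^t - y^t)}\geq 0$ since $S\succeq 0$; this is precisely the claimed inequality (and in fact the derivation gives the slightly stronger bound with a full, rather than half, first term). There is no genuinely hard step here: the computation is routine algebra with the $S$-seminorm. The only point deserving a line of care is the one noted above, namely justifying that the subproblem indeed attains its minimum so that the stationarity condition is legitimate when $Q+S$ is merely positive semidefinite rather than positive definite — this follows from $\mathrm{range}(Q)\subseteq\mathrm{range}(Q+S)$ together with $Sy^t\in\mathrm{range}(S)\subseteq\mathrm{range}(Q+S)$.
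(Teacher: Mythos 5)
Your proof is correct and follows essentially the same route as the paper: the stationarity condition $Qx^t - b + S(x^t - y^t) = 0$ combined with convexity of the quadratic $f$ (dropping the $\frac{1}{2}\inprod{x-x^t}{Q(x-x^t)}\geq 0$ term), followed by the decomposition $x^t - x = (x^t - y^t) + (y^t - x)$. The only differences are minor: the paper first passes through $f(x^t)\leq q(x^t,y^t)$ (mirroring the FISTA template), which is what produces the factor $\frac{1}{2}$ on the first term, whereas you keep the full term and then deliberately halve it; and your remark justifying attainment of the subproblem minimum when $Q+S$ is only positive semidefinite (the vector that must lie in the range is $b + Sy^t$, not $b - Sy^t$, but your range argument handles both summands anyway) addresses a point the paper leaves implicit.
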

\begin{proof}
	First, we note that by the optimality condition for $x^t$ in Line 3 of Algorithm
	\ref{alg-apg}, we have that $Qx^t-b+S(x^t-y^t)=0.$
	Then, from the fact that $f(x^{t}) \leq q(x^{t}, y^t)$, we see that 
	\begin{align*}
		f(x) - f(x^{t}) \geq 
        &\; \frac{1}{2}\inprod{x}{Qx} - \inprod{b}{x} - \frac{1}{2}\inprod{x^{t}}{Qx^{t}} + \inprod{b}{x^{t}} - \frac{1}{2}\inprod{x^{t} - y^t}{S(x^{t} - y^t)} \\
		\geq &\; \inprod{Qx^{t}-b}{x - x^{t}} - \frac{1}{2}\inprod{x^{t} - y^t}{S(x^{t} - y^t)} \\
		= &\; -\inprod{S(x^{t} - y^t)}{x - x^{t}} - \frac{1}{2}\inprod{x^{t} - y^t}{S(x^{t} - y^t)} \\
		= &\; \inprod{S(x^{t} - y^t)}{x^{t}-x} - \frac{1}{2}\inprod{x^{t} - y^t}{S(x^{t} - y^t)} \\
		= &\; \frac{1}{2}\inprod{x^{t} - y^t}{S(x^{t} - y^t)} + \inprod{y^t - x}{S(x^{t} - y^t)}.
	\end{align*}
	This completes the proof.
\end{proof}

Then by using Lemma \ref{lemma-diff-f}, we can establish the following recursive descent properties, which then imply the convergence of Algorithm \ref{alg-apg}. 
\begin{lemma}\label{lemma-recursive}
	Let $w^{t}:= \alpha_{t} x^{t}  - (\alpha_{t} - 1)x^{t-1} - x^*$. Then it holds that
	\[
	\alpha_{t}^2(f(x^{t})-f^*) + \frac{1}{2}\inprod{w^{t}}{Sw^{t}} \leq  \alpha_{t-1}^2(f(x^{t-1})-f^*) + \frac{1}{2}\inprod{w^{t-1}}{Sw^{t-1}}, \quad \forall t\geq 1.
	\]
\end{lemma}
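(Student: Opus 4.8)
The plan is to apply Lemma \ref{lemma-diff-f} twice — once with $x = x^{t-1}$ and once with $x = x^*$ — take a convex combination of the two resulting inequalities with weights $1 - 1/\alpha_t$ and $1/\alpha_t$ respectively, and then massage the resulting expression into the claimed telescoping form. First I would write down the two instances of Lemma \ref{lemma-diff-f}:
\begin{align*}
f(x^{t-1}) - f(x^t) &\geq \tfrac{1}{2}\inprod{x^t - y^t}{S(x^t - y^t)} + \inprod{y^t - x^{t-1}}{S(x^t - y^t)},\\
f(x^*) - f(x^t) &\geq \tfrac{1}{2}\inprod{x^t - y^t}{S(x^t - y^t)} + \inprod{y^t - x^*}{S(x^t - y^t)}.
\end{align*}
Multiply the first by $\alpha_t - 1 \geq 0$, add the second, and multiply through by $\alpha_t$. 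Using the identity $\alpha_{t-1}^2 = \alpha_t^2 - \alpha_t$ (which follows from Line 4 of Algorithm \ref{alg-apg}, since $\alpha_t$ solves $\alpha_t^2 - \alpha_t - \alpha_{t-1}^2 = 0$), the left-hand side becomes exactly $\alpha_{t-1}^2(f(x^{t-1}) - f^*) - \alpha_t^2(f(x^t) - f^*)$, which is the difference of the two quantities appearing in the statement (modulo the proximal terms). So it remains to show that the right-hand side, after multiplication by $\alpha_t$, equals $\tfrac12\inprod{w^t}{Sw^t} - \tfrac12\inprod{w^{t-1}}{Sw^{t-1}}$.

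For that final algebraic identity I would introduce the shorthand $a := x^t - y^t$ and $z := \alpha_t y^t - (\alpha_t - 1)x^{t-1} - x^*$, and first check, using the momentum update in Line 5, that $\alpha_t y^t = \alpha_t x^{t-1} + (\alpha_{t-1} - 1)(x^{t-1} - x^{t-2})$, hence $z = \alpha_{t-1}x^{t-1} - (\alpha_{t-1}-1)x^{t-2} - x^* = w^{t-1}$; I would also verify directly that $w^t = z + \alpha_t a$. The right-hand side collected above is $\tfrac{\alpha_t}{2}\inprod{a}{Sa} + \alpha_t\inprod{a}{Sz} \cdot$ — more precisely $\alpha_t\left(\tfrac12\inprod{a}{Sa} + \inprod{a}{S\,(\text{weighted combination of } y^t - x^{t-1} \text{ and } y^t - x^*)}\right)$, and the weighted combination is precisely $\tfrac{1}{\alpha_t}z$, so the whole right-hand side is $\tfrac{\alpha_t}{2}\inprod{a}{Sa} + \inprod{a}{Sz} = \tfrac{1}{2}\left(\inprod{\alpha_t a + z}{S(\alpha_t a + z)} - \inprod{z}{Sz}\right) = \tfrac12\inprod{w^t}{Sw^t} - \tfrac12\inprod{w^{t-1}}{Sw^{t-1}}$, where the middle step is just completing the square and the last uses $z = w^{t-1}$ and $w^t = \alpha_t a + z$. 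Combining this with the left-hand side computation gives the claimed inequality.

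The main obstacle I anticipate is purely bookkeeping: correctly tracking the momentum coefficient $\frac{\alpha_t - 1}{\alpha_{t+1}}$ versus $\frac{\alpha_{t-1} - 1}{\alpha_t}$ across the index shift (the statement involves step $t$, so the relevant update is $y^t$ in terms of $x^{t-1}$, $x^{t-2}$, governed by $\alpha_{t-1}$ and $\alpha_t$), and verifying the two vector identities $z = w^{t-1}$ and $w^t = z + \alpha_t(x^t - y^t)$ without sign errors. Once those identities are in hand, the completing-the-square step and the use of $\alpha_{t-1}^2 = \alpha_t^2 - \alpha_t$ are routine. No positive-definiteness of $S$ is needed anywhere — only $S \in \S_+^n$, which is what makes Lemma \ref{lemma-diff-f} (via $f \leq q$) and the quadratic-form completing-the-square manipulation valid.
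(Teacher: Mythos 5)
Your proposal is correct and follows essentially the same route as the paper's proof: apply Lemma \ref{lemma-diff-f} at $x^{t-1}$ and $x^*$, combine with weights $\alpha_t-1$ and $1$, multiply by $\alpha_t$, use $\alpha_{t-1}^2=\alpha_t(\alpha_t-1)$, identify $\alpha_t y^t-(\alpha_t-1)x^{t-1}-x^*=w^{t-1}$, and complete the square. The only blemish is a dropped factor of $\alpha_t$ in your final displayed identity (after the multiplication by $\alpha_t$ the right-hand side is $\frac{\alpha_t^2}{2}\inprod{a}{Sa}+\alpha_t\inprod{a}{Sz}$, which is what the completed square equals), but this is exactly the bookkeeping you anticipated and does not affect the argument.
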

\begin{proof}
	Applying Lemma \ref{lemma-diff-f} for $x = x^{t-1}$ and $x = x^*$, respectively, we get 
	\begin{align*}
		f(x^{t-1}) - f(x^{t})  \geq &\; \frac{1}{2}\inprod{x^{t} - y^t}{S(x^{t} - y^t)} + \inprod{y^t - x^{t-1}}{S(x^{t} - y^t)}, \\
		f(x^{*}) - f(x^{t})  \geq &\; \frac{1}{2}\inprod{x^{t} - y^t}{S(x^{t} - y^t)} + \inprod{y^t - x^*}{S(x^{t} - y^t)}.
	\end{align*}
	Multiplying $\alpha_{t} - 1$ (notice that $\alpha_t \geq 1$ for $t\geq 1$) on both sides of the first inequality and adding the result to the second one yields that
	\begin{align*}
		&\; (\alpha_{t}-1)(f(x^{t-1}) - f^*) - \alpha_{t} (f(x^{t})-f^*) \\
		\geq &\;\frac{\alpha_{t}}{2}\inprod{x^{t} - y^t}{S(x^{t} - y^t)} +  (\alpha_{t}-1)\inprod{y^t - x^{t-1}}{S(x^{t} - y^t)} + \inprod{y^t - x^*}{S(x^{t} - y^t)} \\
		= &\; \frac{\alpha_{t}}{2}\inprod{x^{t} - y^t}{S(x^{t} - y^t)} + \inprod{\alpha_{t}y^t - (\alpha_{t}-1)x^{t-1} - x^*}{S(x^{t} - y^t)}
	\end{align*}
	where we have used the facts that $f(x) - f^*\geq 0$ for $x\in \R^n$, for $t\geq 1$. Then, by multiplying $\alpha_{t}$ 
	on both side of the last inequality {and using the fact that $\alpha_{t-1}^2 = \alpha_{t}(\alpha_{t} - 1)$}, we obtain that 
	\begin{align*}
		&\; \alpha_{t-1}^2 (f(x^{t-1}) - f^*) - \alpha_{t}^2(f(x^{t}) - f^*) \\
		\geq &\; \frac{\alpha_{t}^2}{2}\inprod{x^{t} - y^t}{S(x^t - y^t)} + \alpha_{t}\inprod{\alpha_{t}y^t - (\alpha_{t}-1)x^{t-1} - x^*}{S(x^{t} - y^t)} \\
		= &\; \frac{1}{2}\inprod{\alpha_{t}x^{t} - \alpha_{t}y^t}{S(\alpha_{t}x^{t} - \alpha_{t}y^t)}  + \inprod{\alpha_{t}y^t - (\alpha_{t}-1)x^{t-1} - x^*}{S(\alpha_{t}x^{t} - \alpha_{t}y^t)} \\
		= &\; \frac{1}{2}\inprod{\alpha_{t}x^{t} - (\alpha_{t}-1)x^{t-1} - x^* {- w^{t-1}}}{S(\alpha_{t}x^{t} - (\alpha_{t}-1)x^{t-1} - x^* {- w^{t-1}})} \\
		&\; {+} \inprod{{w^{t-1}}}{S({\alpha_{t}x^t} - (\alpha_{t}-1)x^{t-1} - x^* {- w^{t-1}})} \\
		= &\; \frac{1}{2}\inprod{w^{t}}{Sw^{t}} - \frac{1}{2}\inprod{w^{t-1}}{Sw^{t-1}}
	\end{align*}
	where in the {second} equality we used the fact that $\alpha_ty^t - (\alpha_t-1)x^{t-1} - x^* = w^{t-1}$, for $t\geq 1$, which can be deduced directly from Line 4 and Line 5 in Algorithm \ref{alg-apg}, {and in the third equality we substituted the definition of $w^t:=\alpha_{t} x^{t}  - (\alpha_{t} - 1)x^{t-1} - x^*$, for $t\geq 1$, as given in the statement of the lemma}.
\end{proof}

Now, we shall present the main convergence properties of the Algorithm \ref{alg-apg}.  
\begin{theorem} \label{thm-convergence}
	Let $\{x^t\}$ be the sequence generated by Algorithm \ref{alg-apg}, then it holds that
	\[
	0\leq f(x^t) - f^* \leq \frac{2\norm{x^0 - x^*}_S^2}{(t+1)^2},\quad t\geq 1.
	\]
\end{theorem}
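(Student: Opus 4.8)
The plan is to telescope the recursion in Lemma~\ref{lemma-recursive}, discard a nonnegative term, and then convert the resulting inequality into a rate via an elementary lower bound on the sequence $\{\alpha_t\}$.

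First, I would sum the inequality of Lemma~\ref{lemma-recursive} over the indices $s = 1, 2, \dots, t$; the intermediate terms cancel and what remains is
\[
\alpha_t^2\bigl(f(x^t) - f^*\bigr) + \frac{1}{2}\inprod{w^t}{S w^t} \;\leq\; \alpha_0^2\bigl(f(x^0) - f^*\bigr) + \frac{1}{2}\inprod{w^0}{S w^0}.
\]
The right-hand side must be evaluated carefully. Since $\alpha_0 = 0$ the first term vanishes. For the second term, the quantity $w^0$ should not be read off the formula $w^t = \alpha_t x^t - (\alpha_t-1)x^{t-1} - x^*$ at $t = 0$ (which would involve the undefined iterate $x^{-1}$); instead I would use the identity $\alpha_s y^s - (\alpha_s - 1) x^{s-1} - x^* = w^{s-1}$ already verified in the proof of Lemma~\ref{lemma-recursive}, which at $s = 1$ together with $\alpha_1 = 1$ and $y^1 = x^0$ gives $w^0 = y^1 - x^* = x^0 - x^*$. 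Hence the right-hand side equals $\frac12\norm{x^0 - x^*}_S^2$. Because $S \in \S_+^n$, the term $\frac12\inprod{w^t}{S w^t}$ is nonnegative and can be dropped, leaving $\alpha_t^2\bigl(f(x^t) - f^*\bigr) \leq \frac12\norm{x^0 - x^*}_S^2$.

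Second, I would prove by induction that $\alpha_t \geq \frac{t+1}{2}$ for all $t \geq 1$: the base case $t = 1$ holds with equality, and if $\alpha_t \geq \frac{t+1}{2}$ then the update rule in Line~4 of Algorithm~\ref{alg-apg} gives $\alpha_{t+1} = \frac{1 + \sqrt{1 + 4\alpha_t^2}}{2} \geq \frac{1 + \sqrt{1 + (t+1)^2}}{2} \geq \frac{1 + (t+1)}{2} = \frac{t+2}{2}$. Combining this with the previous display, and noting that $f(x^t) - f^* \geq 0$ since $f^* = \min_{x \in \R^n} f(x)$ is attained, we conclude
\[
0 \;\leq\; f(x^t) - f^* \;\leq\; \frac{\norm{x^0 - x^*}_S^2}{2\alpha_t^2} \;\leq\; \frac{2\norm{x^0 - x^*}_S^2}{(t+1)^2}.
\]

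I do not expect any step to pose a serious obstacle: given Lemmas~\ref{lemma-diff-f} and~\ref{lemma-recursive}, the argument is a standard FISTA-style telescoping. The only point requiring genuine care is the treatment of the boundary term $w^0$ in the telescoped sum, as explained above; the telescoping itself, the use of positive semidefiniteness of $S$ to drop a term, and the elementary induction $\alpha_t \geq (t+1)/2$ are all routine.
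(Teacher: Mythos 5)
Your proposal is correct and follows essentially the same route as the paper's proof: telescoping Lemma~\ref{lemma-recursive}, using $\alpha_0=0$ and $w^0=x^0-x^*$, dropping the nonnegative term $\frac12\inprod{w^t}{Sw^t}$, and invoking the inductive bound $\alpha_t\geq\frac{t+1}{2}$. Your explicit treatment of the boundary term $w^0$ via the identity $\alpha_1 y^1-(\alpha_1-1)x^0-x^*=w^0$ simply spells out a detail the paper states without comment.
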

\begin{proof}
	By the recursive relation stated in Lemma \ref{lemma-recursive}, we see that 
	\[
	\alpha_{t}^2(f(x^{t})-f^*) + \frac{1}{2}\inprod{w^{t}}{Sw^{t}} \leq \alpha_{0}^2(f(x^{0})-f^*) + \frac{1}{2}\inprod{w^{0}}{Sw^{0}} = \frac{1}{2}\inprod{w^{0}}{Sw^{0}},\quad t\geq 1.
	\]
    
    We next claim that $\alpha_t \geq (t+1)/2$ for all $t\geq 1$. We prove this by mathematical induction. For $t = 1$, we see that $\alpha_1 = (1+\sqrt{1+4\cdot 0})/2 = 1 \geq (1+1)/2$. Hence, the statement is true for $t = 1$. Suppose that the statement is true for $t$, i.e., $\alpha_t\geq (t+1)/2$. Then, it holds that 
    \[
        \alpha_{t+1} = \frac{1+\sqrt{1+4\alpha_t^2}}{2} \geq \frac{1+\sqrt{1+(t+1)^2}}{2} >  \frac{1+\sqrt{(t+1)^2}}{2} = \frac{t+2}{2}.
    \]
    Therefore, the statement is true for $t+1$, which implies that the statement is true for all $t\geq 1$. From this, we see that 
    \[
        0\leq f(x^t) - f^* \leq \frac{\inprod{w^{0}}{Sw^{0}}}{2\alpha^2_t} \leq \frac{2\norm{x^0 - x^*}_S^2}{(t+1)^2},\quad t\geq 1.
    \]   
    Hence, the proof is completed.
\end{proof}

Theorem \ref{thm-convergence} gives an improved $O\left(\frac{1}{t^2}\right)$ convergence rate that is obviously better than the traditional gradient method, whose worst-case convergence rate is $O\left(\frac{1}{t}\right)$. However, careful readers may observe that the generated objective values $\{f(x^t)\}$ are not necessarily monotonically decreasing. This is reasonable since Theorem \ref{thm-convergence} only provides a sequence of upper bounds for the objective values. {Since the objective function is only assume to be convex, the convergence of the objective values does not imply the convergence of the sequence $\{x^t\}$ to an optimal solution. A certain modification to the Nesterov's accelerated method can be applied to guarantee the convergence of $\{x^t\}$; see e.g., \cite{chambolle2015convergence}. Furthermore, it is worth noticing that, when $Q\in \mathbb{S}_{++}^n$, linear convergence can be obtained for a modified Nesterov's accelerated method \cite[Example 3.7]{valkonen2020inertial}.}

\subsection{An adaptive restarting strategy}\label{section-restart}

While Algorithm \ref{alg-apg} admits appealing convergence properties, there exist various techniques that can further improve the practical performance of the algorithm empirically; see e.g., \cite{ito2017unified,o2015adaptive} and the references therein. In this subsection, we introduce an adaptive restarting scheme and analyze its convergence. 

The template of our restarted algorithm is described in Algorithm \ref{alg-apg-re}. Here, we briefly explain the ideas of the proposed restarting scheme. Observe that the updating rule of $y^{t+1}$ reads $y^{t+1} = x^{t} + \frac{\alpha_t - 1}{\alpha_{t+1}}(x^t - x^{t-1})$, which amounts to adding the momentum $\frac{\alpha_t - 1}{\alpha_{t+1}}(x^t - x^{t-1})$ to $x^t$. The significance of this momentum $\frac{\alpha_t - 1}{\alpha_{t+1}} \in [0,1]$ approaches 1 quickly due to the fact that $\alpha_t \geq \frac{t+1}{2}$, for $t\geq 0$. This can cause the generated iterates $x^t$ and $y^t$ to oscillate (i.e., overshoot) near the optimal solution $x^*$, which further slows down the convergence speed of the algorithm, based on our empirical observation. To address this concern, an adaptive restarting strategy that intermittently resets the momentum's significance back to zero, utilizing the point generated from the preceding iteration as the new starting point (by setting $\alpha_{t+1} = 1$, $\alpha_t = 0$, $y^{t+1} = x^{t-1}$, and $x^t = x^{t-1}$). There remains one critical issue to be addressed: 
\begin{center}
    \textit{What is the right time to perform restarting?} 
\end{center}

There is no affirmative answer to this question, and the approach we employed is somewhat heuristic in nature. On the one hand, when the monitored condition {in Line 4 of Algorithm \ref{alg-apg-re}, i.e.,} $\inprod{Qy^t - b}{x^t - x^{t-1}} \geq  0$ is satisfied, we see that the angle between $\nabla f (y^t)$ and the direction of motion $x^t - x^{t-1}$ is acute. Intuitively, $x^t$ seems to make the objective function increase. So this condition can be a heuristic signal for us to trigger a restart. On the other hand, we do not want to perform the restarting too frequently in order to allow the algorithm to make significant progress. Moreover, oscillation may only be moderate, and a restart could destroy the fast convergence speed. Hence, similar to \cite{ito2017unified}, we set a prohibition period and allow the algorithm to restart only when the number of iterations between the $(\ell-1)$-th restart and the $\ell$-th restart is larger than $K_\ell$, where $K_\ell$ is updated as $K_{\ell+1} = 2K_\ell$ for $\ell\geq 0$ and $K_0\geq 2$ is specified by the user. 

\begin{algorithm}[htb!]
	\begin{algorithmic}[1]
		\STATE \textbf{Input:} Initial points $x^{0} = y^1 \in \R^n$, $\alpha_0 = 0,\;\alpha_1 = 1$, $S\in \S_+^n$, $\ell = 0$, $K_{\ell}\geq 2$ and $K_{\textrm{re}} = 0$.
		\FOR{$t\geq 1$}
		\STATE $x^{t} = \argmin\left\{\frac{1}{2}\inprod{x}{Qx} - \inprod{b}{x} + \frac{1}{2}\norm{x - y^t}_S^2\right\}$.
		\IF{$t > K_{\textrm{re}} + K_{\ell}$ and $\inprod{Qy^t - b}{x^t - x^{t-1}} \geq  0$}
		\STATE $K_{\textrm{re}} = t$.
		\STATE $K_{\ell+1} = 2K_{\ell}$.
		\STATE $\ell = \ell + 1$.
		\STATE $\alpha_{t+1} = 1$, $\alpha_t = 0$.
		\STATE $y^{t+1} = x^{t-1}$, $x^t = x^{t-1}$.
		\ELSE
		\STATE $\alpha_{t+1} = \frac{1+\sqrt{1+4\alpha_t^2}}{2}$.
		\STATE $y^{t+1} = \left(1+\frac{\alpha_t - 1}{\alpha_{t+1}}\right)x^{t} - \frac{\alpha_t - 1}{\alpha_{t+1}}x^{t-1}$.
		\ENDIF 
		\ENDFOR 
		\STATE \textbf{Output:} $x^{t}$
	\end{algorithmic}
	\caption{The accelerated gradient method with restarting strategy.}
	\label{alg-apg-re}
\end{algorithm}

Now, we can proceed to analyze the convergence of the restarted algorithm. We first introduce some notation. Let $\widetilde{K}_\ell$ denotes the number of iterations taken between the $\ell$-th and the $(\ell + 1)$-th restart, excluding the iteration at which the restart takes place. With this notation, the generated sequence $\{x^t\}$ can be denoted as $\{x^{\ell, k_\ell}\}$ where 
{$t = \sum_{j=0}^{\ell-1}\widetilde{K}_j + k_\ell$} and $0\leq k_\ell \leq \widetilde{K}_\ell$. Then, by the description of Algorithm \ref{alg-apg-re}, we can see that $f(x^{\ell, 0}) = f(x^{\ell-1, \widetilde{K}_{\ell-1}})$ for all $\ell\geq 1$. See Figure \ref{fig-restart} for an illustration of this notation used in our restarting strategy.

\tikzset{every picture/.style={line width=0.75pt}} 
\begin{figure}[htb!]
	\centering    
	\resizebox{0.95\textwidth}{!}{
		\begin{tikzpicture}[x=0.75pt,y=0.75pt,yscale=-1,xscale=1]
			
			\draw    (100,107) -- (661,106.5) ;
			\draw [shift={(663,106.5)}, rotate = 179.95] [color={rgb, 255:red, 0; green, 0; blue, 0 }  ][line width=0.75]    (21.86,-6.58) .. controls (13.9,-2.79) and (6.61,-0.6) .. (0,0) .. controls (6.61,0.6) and (13.9,2.79) .. (21.86,6.58)   ;
			\draw  [dash pattern={on 4.5pt off 4.5pt}]  (98,46.5) -- (99,129.5) ;
			\draw  [dash pattern={on 4.5pt off 4.5pt}]  (167,47.5) -- (168,126) ;
			\draw    (103,97.02) -- (140,97.48) ;
			\draw [shift={(142,97.5)}, rotate = 180.7] [color={rgb, 255:red, 0; green, 0; blue, 0 }  ][line width=0.75]    (10.93,-3.29) .. controls (6.95,-1.4) and (3.31,-0.3) .. (0,0) .. controls (3.31,0.3) and (6.95,1.4) .. (10.93,3.29)   ;
			\draw [shift={(101,97)}, rotate = 0.7] [color={rgb, 255:red, 0; green, 0; blue, 0 }  ][line width=0.75]    (10.93,-3.29) .. controls (6.95,-1.4) and (3.31,-0.3) .. (0,0) .. controls (3.31,0.3) and (6.95,1.4) .. (10.93,3.29)   ;
			\draw    (101,62.04) -- (168,63.46) ;
			\draw [shift={(170,63.5)}, rotate = 181.21] [color={rgb, 255:red, 0; green, 0; blue, 0 }  ][line width=0.75]    (10.93,-3.29) .. controls (6.95,-1.4) and (3.31,-0.3) .. (0,0) .. controls (3.31,0.3) and (6.95,1.4) .. (10.93,3.29)   ;
			\draw [shift={(99,62)}, rotate = 1.21] [color={rgb, 255:red, 0; green, 0; blue, 0 }  ][line width=0.75]    (10.93,-3.29) .. controls (6.95,-1.4) and (3.31,-0.3) .. (0,0) .. controls (3.31,0.3) and (6.95,1.4) .. (10.93,3.29)   ;
			\draw    (131,107) .. controls (153.66,142.95) and (127.8,144.46) .. (130.84,181.77) ;
			\draw [shift={(131,183.5)}, rotate = 264.14] [color={rgb, 255:red, 0; green, 0; blue, 0 }  ][line width=0.75]    (10.93,-3.29) .. controls (6.95,-1.4) and (3.31,-0.3) .. (0,0) .. controls (3.31,0.3) and (6.95,1.4) .. (10.93,3.29)   ;
			\draw  [dash pattern={on 4.5pt off 4.5pt}]  (249,46.5) -- (250,125) ;
			\draw  [dash pattern={on 4.5pt off 4.5pt}]  (516,47.5) -- (517,126) ;
			\draw  [dash pattern={on 4.5pt off 4.5pt}]  (183,78.5) -- (234,78.5) ;
			\draw  [dash pattern={on 4.5pt off 4.5pt}]  (547,78.5) -- (598,78.5) ;
			\draw    (256,96.99) -- (443,96.51) ;
			\draw [shift={(445,96.5)}, rotate = 179.85] [color={rgb, 255:red, 0; green, 0; blue, 0 }  ][line width=0.75]    (10.93,-3.29) .. controls (6.95,-1.4) and (3.31,-0.3) .. (0,0) .. controls (3.31,0.3) and (6.95,1.4) .. (10.93,3.29)   ;
			\draw [shift={(254,97)}, rotate = 359.85] [color={rgb, 255:red, 0; green, 0; blue, 0 }  ][line width=0.75]    (10.93,-3.29) .. controls (6.95,-1.4) and (3.31,-0.3) .. (0,0) .. controls (3.31,0.3) and (6.95,1.4) .. (10.93,3.29)   ;
			\draw    (253,63) -- (517,63.5) ;
			\draw [shift={(519,63.5)}, rotate = 180.11] [color={rgb, 255:red, 0; green, 0; blue, 0 }  ][line width=0.75]    (10.93,-3.29) .. controls (6.95,-1.4) and (3.31,-0.3) .. (0,0) .. controls (3.31,0.3) and (6.95,1.4) .. (10.93,3.29)   ;
			\draw [shift={(251,63)}, rotate = 0.11] [color={rgb, 255:red, 0; green, 0; blue, 0 }  ][line width=0.75]    (10.93,-3.29) .. controls (6.95,-1.4) and (3.31,-0.3) .. (0,0) .. controls (3.31,0.3) and (6.95,1.4) .. (10.93,3.29)   ;
			\draw    (381.5,106.75) .. controls (404.16,142.7) and (378.3,144.21) .. (381.34,181.52) ;
			\draw [shift={(381.5,183.25)}, rotate = 264.14] [color={rgb, 255:red, 0; green, 0; blue, 0 }  ][line width=0.75]    (10.93,-3.29) .. controls (6.95,-1.4) and (3.31,-0.3) .. (0,0) .. controls (3.31,0.3) and (6.95,1.4) .. (10.93,3.29)   ;
			
			\draw (107,148) node [anchor=north west][inner sep=0.75pt]   [align=left] {$ $};
			\draw (110,77) node [anchor=north west][inner sep=0.75pt]   [align=left] {$\displaystyle K_{0}$};
			\draw (95,133) node [anchor=north west][inner sep=0.75pt]   [align=left] {0};
			\draw (645,133) node [anchor=north west][inner sep=0.75pt]   [align=left] {t};
			\draw (110,35) node [anchor=north west][inner sep=0.75pt]   [align=left] {$\displaystyle \tilde{K}_{0} +1$};
			\draw (140,131) node [anchor=north west][inner sep=0.75pt]   [align=left] {restart 1};
			\draw (61,189) node [anchor=north west][inner sep=0.75pt]   [align=left] {$\displaystyle x^{0,0} ,\ \dotsc ,\ x^{0,\tilde{K}_{0}} ,x^{1,0}$};
			\draw (221,130) node [anchor=north west][inner sep=0.75pt]   [align=left] {restart $\displaystyle \ell $};
			\draw (476,130) node [anchor=north west][inner sep=0.75pt]   [align=left] {restart $\displaystyle \ell +1$};
			\draw (330,75) node [anchor=north west][inner sep=0.75pt]   [align=left] {$\displaystyle K_{\ell }$};
			\draw (329,32) node [anchor=north west][inner sep=0.75pt]   [align=left] {$\displaystyle \tilde{K}_{\ell } +1$};
			\draw (305,189) node [anchor=north west][inner sep=0.75pt]   [align=left] {$\displaystyle x^{\ell ,0} ,\ \dotsc ,\ x^{\ell ,\tilde{K}_{\ell }} ,x^{\ell +1,0}$};
			
		\end{tikzpicture}
	} 
	\caption{Illustration of the restarting scheme and the corresponding notation.}
	\label{fig-restart}
\end{figure}

Using Lemma \ref{lemma-diff-f}, we obtain the following key lemma, which states that if no restart is performed, then the generated objective values are always less than or equal to the objective value of the initial point. 
\begin{lemma} \label{lemma-re-descend}
	Let $\{x^t\}$ be the sequence generated by Algorithm \ref{alg-apg}. Then, it holds that 
	\begin{equation}
		\label{eq-lemma-re-descend-1}
		f(x^{{t}}) \leq f(x^0),\quad \forall \; {t}\geq 0. 
	\end{equation}
	Moreover, it holds that 
	\[
	f(x^{\ell, k_\ell}) \leq f(x^{\ell, 0}),\quad \forall\; \ell \geq 0,\; 0\leq k_\ell \leq \widetilde{K}_\ell,
	\]
	and 
	\[
	f(x^{\ell+1, 0}) \leq f(x^{\ell, 0}),\quad \forall\; \ell\geq 0.      
	\]
\end{lemma}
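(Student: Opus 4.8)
The plan is to prove the three claims in Lemma \ref{lemma-re-descend} in order, all as consequences of Lemma \ref{lemma-diff-f} applied with the right choice of $x$. The key observation is that Lemma \ref{lemma-diff-f} holds at \emph{every} iteration $t\geq 1$ of Algorithm \ref{alg-apg-re}, regardless of whether a restart is performed, because its proof only uses the optimality condition $Qx^t - b + S(x^t - y^t) = 0$ for the subproblem in Line 3, which is common to both branches of the algorithm.

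First I would prove \eqref{eq-lemma-re-descend-1} by strong induction on $k$. The base case $k = 0$ is trivial. For the inductive step, note that at iteration $t$ (with $x^t$ the newly computed point) one always has, whatever branch is taken, that $y^t$ is an affine combination $y^t = x^{t-1} + \beta_t (x^{t-1} - x^{t-2})$ with $\beta_t \geq 0$ — in the non-restart branch $\beta_t = \frac{\alpha_{t-1}-1}{\alpha_t}\in[0,1]$, and in the restart branch $y^t = x^{t-2}$, i.e. $\beta_t = 0$ after the reset (or one simply treats the restart iteration separately, where $y^t = x^{t-1}$ and $x^t = x^{t-1}$ force $f(x^t) = f(x^{t-1})$). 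The more transparent route, however, is to apply Lemma \ref{lemma-diff-f} directly with $x = y^t$: this gives
\[
f(y^t) - f(x^t) \geq \frac{1}{2}\inprod{x^t - y^t}{S(x^t - y^t)} \geq 0,
\]
so $f(x^t) \leq f(y^t)$. Then I would bound $f(y^t)$ by convexity of $f$: since $y^t$ lies on the line through $x^{t-1}$ and $x^{t-2}$ but on the $x^{t-1}$ side with a nonnegative extrapolation parameter, one needs a little care — convexity alone gives $f(y^t)\le \max\{f(x^{t-1}), f(x^{t-2})\}$ only if $y^t$ is a convex combination, which it need not be. So instead I would carry the induction hypothesis $f(x^j)\le f(x^0)$ for all $j<k$ together with a monotonicity-type statement, or better, I would invoke Lemma \ref{lemma-recursive}'s proof idea restricted to a restart-free block: within a block the standard potential $\alpha_t^2(f(x^t)-f^*) + \frac12\inprod{w^t}{Sw^t}$ is nonincreasing, but to get $f(x^t)\le f(x^{\ell,0})$ one applies Lemma \ref{lemma-diff-f} with $x = x^{\ell,0}$ and sums the telescoping inequalities weighted by the $\alpha_t$'s, exactly as in Beck–Teboulle's monotone FISTA analysis; the nonnegativity of all the quadratic $S$-terms then yields $f(x^t)\le f(x^{\ell,0})$.

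Concretely, for the second claim I would fix a block $\ell$ and, for $1\le k_\ell \le \widetilde K_\ell$, apply Lemma \ref{lemma-diff-f} (within this block, where the indices and $\alpha$'s behave as in Algorithm \ref{alg-apg}) twice: once with $x = x^{\ell,k_\ell - 1}$ and once with $x = x^{\ell,0}$, then combine them with the multiplier $\alpha_{k_\ell}-1$ on the first and $1$ on the second, mimicking the proof of Lemma \ref{lemma-recursive} but with $x^*$ replaced by $x^{\ell,0}$. Defining $\widehat w^{k_\ell} := \alpha_{k_\ell} x^{\ell,k_\ell} - (\alpha_{k_\ell}-1)x^{\ell,k_\ell-1} - x^{\ell,0}$ and using $\alpha_{k_\ell-1}^2 = \alpha_{k_\ell}(\alpha_{k_\ell}-1)$ gives the recursion
\[
\alpha_{k_\ell}^2\bigl(f(x^{\ell,k_\ell}) - f(x^{\ell,0})\bigr) + \tfrac12\inprod{\widehat w^{k_\ell}}{S\widehat w^{k_\ell}} \le \alpha_{k_\ell-1}^2\bigl(f(x^{\ell,k_\ell-1}) - f(x^{\ell,0})\bigr) + \tfrac12\inprod{\widehat w^{k_\ell-1}}{S\widehat w^{k_\ell-1}},
\]
which telescopes down to $k_\ell = 1$, where $\alpha_0 = 0$ and $\widehat w^0 = x^{\ell,0} - x^{\ell,0} = 0$ after the restart reset. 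Hence $\alpha_{k_\ell}^2(f(x^{\ell,k_\ell}) - f(x^{\ell,0})) \le 0$, and since $\alpha_{k_\ell}\ge 1$ this gives $f(x^{\ell,k_\ell}) \le f(x^{\ell,0})$, proving the second claim. The first claim \eqref{eq-lemma-re-descend-1} then follows because every $x^k$ equals some $x^{\ell,k_\ell}$, and the third claim $f(x^{\ell+1,0}) \le f(x^{\ell,0})$ follows from the identity $f(x^{\ell+1,0}) = f(x^{\ell,\widetilde K_\ell})$ noted in the text together with the second claim applied at $k_\ell = \widetilde K_\ell$.

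The main obstacle is bookkeeping rather than a genuine mathematical difficulty: one must verify that the proof of Lemma \ref{lemma-diff-f} and the algebraic identities used in Lemma \ref{lemma-recursive} (in particular $\alpha_{k-1}^2 = \alpha_k(\alpha_k - 1)$ and $\alpha_k y^k - (\alpha_k-1)x^{k-1} - \text{(reference point)} = \widehat w^{k-1}$) remain valid when the counters are re-indexed within a block and the reference point $x^*$ is swapped for $x^{\ell,0}$ — this hinges on the fact that the restart reset sets $\alpha \leftarrow 0, 1$ and $y^{t+1} \leftarrow x^{t-1}$, precisely reproducing the initial conditions of Algorithm \ref{alg-apg}, so that within each block the dynamics are an exact copy of the non-restarted scheme. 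Care is also needed at the restart iteration itself, where $x^t$ is overwritten by $x^{t-1}$; there $f$ is unchanged, so it causes no trouble, but it must be explicitly acknowledged so that the block decomposition $t = \sum_{j<\ell}\widetilde K_j + k_\ell$ lines up with the claimed inequalities.
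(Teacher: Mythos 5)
Your proposal is correct, but it reaches the within-block descent by a different route than the paper. The paper's own proof stays at the level of consecutive iterates: it applies Lemma \ref{lemma-diff-f} with $x = x^{j}$ to get $f(x^{j+1}) - f(x^j) \leq \frac{1}{2}\bigl(\frac{\alpha_j-1}{\alpha_{j+1}}\bigr)^2\norm{x^j - x^{j-1}}_S^2 - \frac{1}{2}\norm{x^{j+1} - x^j}_S^2$, sums these unweighted over a restart-free stretch, and uses the elementary bound $\bigl(\frac{\alpha_j-1}{\alpha_{j+1}}\bigr)^2 \leq 1$ so that the telescoped quadratic terms are nonpositive, yielding $f(x^t) \leq f(x^1) \leq f(x^0)$ within each block (in fact with an explicit leftover term $-\frac{1}{2}\norm{x^t - x^{t-1}}_S^2$ quantifying the descent). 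You instead re-run the weighted Lyapunov argument of Lemma \ref{lemma-recursive} with the reference point $x^*$ replaced by the block's starting point $x^{\ell,0}$, and conclude from $\widehat w^0 = 0$ and $\alpha_{k_\ell} \geq 1$ that $f(x^{\ell,k_\ell}) \leq f(x^{\ell,0})$ --- in effect the $O(1/k^2)$ bound of Theorem \ref{thm-convergence} with zero initial radius. This is legitimate: the one point you flag as bookkeeping is genuinely the crux, namely that nothing in the proof of Lemma \ref{lemma-recursive} requires the reference point to be a minimizer, and this holds because $\alpha_{k-1}^2 = \alpha_k(\alpha_k - 1)$ is an exact identity under the stated $\alpha$-update (the nonnegativity of $f(x) - f^*$ invoked in the paper's Lemma \ref{lemma-recursive} is not load-bearing), so no sign issue arises even though $f(x^{\ell,k}) - f(x^{\ell,0})$ may be negative; your handling of the restart reset (local $\alpha$'s and $y$ reproducing the initial conditions of Algorithm \ref{alg-apg}, and the overwrite $x^t = x^{t-1}$ leaving $f$ unchanged) is also right. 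Comparing the two: the paper's summation argument is more elementary and exhibits explicit per-block descent quantities, while your potential-function route is more unified --- it reuses the machinery already set up for Theorem \ref{thm-convergence} and makes transparent that the same recursion with reference $x^{\ell,0}$ simultaneously gives both the monotonicity $f(x^{\ell,k_\ell}) \leq f(x^{\ell,0})$ and the rate used later in Theorem \ref{thm-convergence-re}. (The exploratory first paragraph about bounding $f(y^t)$ by convexity is indeed a dead end, as you note, but it plays no role in your final argument.)
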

\begin{proof}
	First, we assume that the restart does not occur until the $(t+1)$-th iteration. From Lemma \ref{lemma-diff-f} and using the fact that
    $$\inprod{\mathbf{b}-\mathbf{a}}{S(\mathbf{b}-\mathbf{a})} + 2\inprod{\mathbf{a}-\mathbf{c}}{S(\mathbf{b}-\mathbf{a})} = \inprod{\mathbf{b}-\mathbf{c}}{S(\mathbf{b}-\mathbf{c})} - \inprod{\mathbf{a}-\mathbf{c}}{S(\mathbf{a}-\mathbf{c})}$$
    for all $\mathbf{a}, \mathbf{b}, \mathbf{c}\in\mathbb{R}^n$, it is easy to check that 
	\[
	f(x^j) - f(x) \leq \frac{1}{2}\inprod{x - y^j}{S(x - y^j)} - \frac{1}{2}\inprod{x - x^j}{S(x - x^j)},\quad \forall \;0\leq j\leq t,
	\]
    {by letting $\mathbf{a}=y^t$, $\mathbf{b} = x^j$, and $\mathbf{c} = x$.} In particular, for $j = 1$ {and $x = x^0$}, we have 
	\begin{align*}
		f(x^1) - f(x^0) \leq &\; \frac{1}{2}\inprod{x^0 - y^1}{S(x^0 - y^1)} - \frac{1}{2}\inprod{x^0 - x^1}{S(x^0 - x^1)} \\
		= &\;  - \frac{1}{2}\inprod{x^0 - x^1}{S(x^0 - x^1)} \leq 0.
	\end{align*}
	For all $1\leq j\leq t-1$ {and $x = x^j$}, we also have 
	\begin{align*}
		&\; f(x^{j+1}) - f(x^j)\\ 
		\leq &\; \frac{1}{2}\inprod{x^j - y^{j+1}}{S(x^j - y^{j+1})} - \frac{1}{2}\inprod{x^j - x^{j+1}}{S(x^j - x^{j+1})} \\
		= &\; \frac{1}{2} \left(\frac{\alpha_{j}-1}{\alpha_{j+1}}\right)^2 \inprod{x^j - x^{j-1}}{S(x^{j} - x^{j-1})} - \frac{1}{2}\inprod{x^{j+1} - x^{j}}{S(x^{j+1} - x^{j})}.
	\end{align*}
	Summing {the above} over $j = 1, 2, \dots, t-1$, we obtain that
	\begin{align*}
		&\; f(x^t) - f(x^1)\\
		\leq &\; \frac{1}{2}\sum_{j = 2}^{t-1}\left(\left(\frac{\alpha_j-1}{\alpha_{j+1}}\right)^2-1\right)\inprod{x^j - x^{j-1}}{S(x^j - x^{j-1})} \\
		&\; + \frac{1}{2}\left(\frac{\alpha_1-1}{\alpha_2}\right)^2\inprod{x^1 - x^0}{S(x^1 - x^0)} - \frac{1}{2}\inprod{x^t - x^{t-1}}{S(x^t - x^{t-1})}.
	\end{align*}
	Recall that $\alpha_1 = 1$, {we see that the second term on the right-hand-side vanishes. Moreover, one can observe from the relation $\alpha_{t+1} = (1+\sqrt{1+4\alpha_t^2})/2$} that 
	\[
	\left(\frac{\alpha_j-1}{\alpha_{j+1}}\right)^2-1 = \frac{\alpha_j^2 - 2\alpha_j + 1 - \alpha_{j+1}^2}{\alpha_{j+1}^2} = \frac{2\sqrt{1+4\alpha_t^2}-8\alpha_t + 2}{4\alpha_{j+1}^2} < 0,
	\]
	where the last inequality is due to the fact that $\alpha_t \geq 1$ for $t\geq 1$. Then, we see that 
	\[
	f(x^t) - f(x^1) \leq -\frac{1}{2}\inprod{x^t - x^{t-1}}{S(x^t - x^{t-1})}.
	\]
    {This shows that the objective values are always not larger than the objective value at the initial piont if the restart is not performed.}
	
	Similarly, since the restart does not occur from $x^{\ell, 0}$ to $x^{\ell, \widetilde{K}_\ell}$, we can show that 
	\[
	f(x^{\ell, k_\ell}) \leq f(x^{\ell, 0}),\quad \forall\; 0\leq k_\ell \leq \widetilde{K}_\ell. 
	\]
    In particular, when $k_\ell = \widetilde{K}_\ell$, it holds that 
    \[
        f(x^{\ell+1, 0}) = f(x^{\ell, \widetilde{K}_{\ell}}) \leq f(x^{\ell, 0}),\quad \forall \ell\,\geq 0.
    \]
	Thus, the proof is completed.
\end{proof}

The above lemma indicates that the sequence $\{f(x^{\ell,0})\}$ is nonincreasing. Using this fact and assuming that $Q\in\S_{++}^n$, we are able to establish the convergence of Algorithm \ref{alg-apg-re}. In this case, one can see that the function $f$ is coercive and the level set 
\[
\mathcal{L}(x^0):=\{x\in \R^n\;:\; f(x)\leq f(x^0)\}
\]
is bounded. Consequently, there exist a constant $R > 0$ such that 
\[
R \geq \sup_{x\in \mathcal{L}(x^0)} \norm{x - x^*}_S.
\]
\begin{theorem}
	\label{thm-convergence-re}
	Let the sequence $\{x^t:=x^{\ell,k_\ell}\}$ be generated by Algorithm \ref{alg-apg-re}. Suppose that $Q\in\mathbb{S}_{++}^n$.
	Then, it holds that 
	\[
	f(x^t) - f(x^*)\leq \frac{2R^2(\log_2t)^2}{t^2},\quad \forall\; t\geq 2.
	\]
\end{theorem}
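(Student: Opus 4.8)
The plan is to reduce the analysis to Theorem~\ref{thm-convergence} applied cycle by cycle, together with the monotonicity provided by Lemma~\ref{lemma-re-descend}. The key observation is that, between two consecutive restarts, Algorithm~\ref{alg-apg-re} is \emph{exactly} a fresh run of Algorithm~\ref{alg-apg}: the reset $\alpha_{t+1}=1$, $\alpha_t=0$, $y^{t+1}=x^{t-1}$, $x^t=x^{t-1}$ in Lines~8--9 is precisely the initialization of Algorithm~\ref{alg-apg} at the point $x^{\ell,0}$. Hence, using the notation $x^{\ell,k_\ell}$ of Figure~\ref{fig-restart}, Theorem~\ref{thm-convergence} gives
\[
f(x^{\ell,k}) - f^* \leq \frac{2\norm{x^{\ell,0}-x^*}_S^2}{(k+1)^2},\qquad 1\leq k\leq\widetilde{K}_\ell,\quad \ell\geq 0 .
\]
Note that the monitored angle condition plays no role in the worst-case bound; only the prohibition periods matter.

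First I would make the right-hand side uniform in $\ell$. By Lemma~\ref{lemma-re-descend} the sequence $\{f(x^{\ell,0})\}$ is nonincreasing, so $f(x^{\ell,0})\leq f(x^0)$ for all $\ell$. Since $Q\in\S_{++}^n$, the function $f$ is $\lambda_{\textrm{min}}(Q)$-strongly convex, and combining $\frac{\lambda_{\textrm{min}}(Q)}{2}\norm{x^{\ell,0}-x^*}^2 \leq f(x^{\ell,0})-f^* \leq f(x^0)-f^*$ with $\norm{x^{\ell,0}-x^*}_S^2\leq\lambda_{\textrm{max}}(S)\norm{x^{\ell,0}-x^*}^2$ yields $\norm{x^{\ell,0}-x^*}_S\leq R$ for every $\ell\geq 0$, where $R$ is the constant in the statement (any uniform upper bound on $\norm{x^{\ell,0}-x^*}_S$ works, e.g.\ $R^2=\frac{\lambda_{\textrm{max}}(S)}{\lambda_{\textrm{min}}(Q)}\norm{x^0-x^*}_Q^2$). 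Consequently $f(x^{\ell,k})-f^*\leq\frac{2R^2}{(k+1)^2}$ for $1\leq k\leq\widetilde{K}_\ell$, and, in particular, for $\ell\geq 1$, $f(x^{\ell,0})-f^*=f(x^{\ell-1,\widetilde{K}_{\ell-1}})-f^*\leq\frac{2R^2}{(\widetilde{K}_{\ell-1}+1)^2}$.

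Next I would bound the number of restarts before iteration $t$. The prohibition rule (a restart is allowed only when the within-cycle counter exceeds $K_\ell$, with $K_{\ell+1}=2K_\ell$ and $K_0\geq 2$) forces $\widetilde{K}_\ell\geq 2^\ell K_0\geq 2^{\ell+1}$. Hence, if $x^t=x^{\ell,k_\ell}$ with $t=\sum_{j=0}^{\ell-1}\widetilde{K}_j+k_\ell$, then $t\geq\sum_{j=0}^{\ell-1}2^{j+1}=2^{\ell+1}-2$, which gives $\ell=O(\log_2 t)$, and in fact $\ell+1\leq\log_2 t$ for all $t\geq 2$ once the small cases $\ell\in\{0,1\}$ are checked by hand.

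Finally I would assemble the estimate. Among the $\ell+1$ nonnegative integers $k_\ell,\widetilde{K}_0,\dots,\widetilde{K}_{\ell-1}$, whose sum is $t$, the largest one $M$ satisfies $M\geq\frac{t}{\ell+1}$. If $M=k_\ell$, then $f(x^t)-f^*\leq\frac{2R^2}{(k_\ell+1)^2}\leq\frac{2R^2(\ell+1)^2}{t^2}$. If $M=\widetilde{K}_j$ for some $j\leq\ell-1$, then using the nonincreasingness of $\{f(x^{\cdot,0})\}$ and $f(x^{\ell,k_\ell})\leq f(x^{\ell,0})$ (Lemma~\ref{lemma-re-descend}) together with $f(x^{j+1,0})=f(x^{j,\widetilde{K}_j})$, we get $f(x^t)-f^*\leq f(x^{j+1,0})-f^*=f(x^{j,\widetilde{K}_j})-f^*\leq\frac{2R^2}{(\widetilde{K}_j+1)^2}\leq\frac{2R^2(\ell+1)^2}{t^2}$. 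In either case $f(x^t)-f^*\leq\frac{2R^2(\ell+1)^2}{t^2}\leq\frac{2R^2(\log_2 t)^2}{t^2}$ by the bound on $\ell$. I expect the only real difficulty to be the bookkeeping in this last step: handling $k_\ell=0$ (immediately after a restart, where the per-cycle bound of Theorem~\ref{thm-convergence} is unavailable at $k=0$ and one must fall back to the last iterate of the previous cycle), pinning down the precise constant $R$, and verifying $(\ell+1)^2\leq(\log_2 t)^2$ uniformly for $t\geq 2$ when only one or two restarts have occurred (here one uses the finer estimate $f(x^t)-f^*\leq\frac{2R^2}{(M+1)^2}$ to absorb the lower-order gap between $\ell+1$ and $\log_2 t$).
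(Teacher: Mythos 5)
Your proposal is correct and follows essentially the same route as the paper's proof: apply Theorem~\ref{thm-convergence} cycle by cycle, use Lemma~\ref{lemma-re-descend} to get a uniform bound $\norm{x^{\ell,0}-x^*}_S\leq R$ (the paper obtains $R$ from boundedness of the level set $\mathcal{L}(x^0)$ under $Q\in\S_{++}^n$, while you make it explicit via strong convexity --- a cosmetic difference), bound the number of restarts by $\ell+1\lesssim\log_2 t$ using $\widetilde{K}_j\geq K_0\cdot 2^j$, and conclude by a pigeonhole argument on the largest of the $\ell+1$ blocks $\widetilde{K}_0,\dots,\widetilde{K}_{\ell-1},k_\ell$. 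The small-$t$/logarithm bookkeeping you flag is handled no more carefully in the paper itself, so there is no substantive gap.
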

\begin{proof}
	From Theorem \ref{thm-convergence} and Lemma \ref{lemma-re-descend}, we have 
	\[
	f(x^{\ell, k_{\ell}}) - f(x^*) \leq \frac{2\norm{x^{\ell, 0} - x^*}_S^2}{(k_{\ell}+1)^2} \leq \frac{2R^2}{(k_{\ell}+1)^2}, \quad \forall \; \ell\geq 0,\; 0\leq k_\ell\leq \widetilde{K}_\ell,
	\]
	and for all $1 \leq j \leq \ell$,
	\begin{align*}
		f(x^{\ell, k_\ell}) - f(x^*) \leq &\;  f(x^{\ell, 0}) - f(x^*) 
		\leq f(x^{j, 0}) - f(x^*) \\
		= &\;  f(x^{j-1, \widetilde{K}_{j-1}}) - f(x^*) 
		\leq  \frac{2\norm{x^{j-1,0} - x^*}_S^2}{(\widetilde{K}_{j-1}+1)^2},
	\end{align*}
	which implies that $f(x^{\ell, k_\ell}) - f(x^*) \leq \frac{2R^2}{\widetilde{K}_{j-1}^2}$, for $1\leq j\leq \ell$. 
	
	Now we consider the iterate $x^{\ell, k_{\ell}}$, which corresponds to the point $x^t$ with $t = \sum_{j = 0}^{\ell-1} \widetilde{K}_j + k_{\ell} \geq 2$ and $0\leq k_{\ell} \leq \widetilde{K}_{\ell}$. Since $\widetilde{K}_\ell \geq K_\ell = K_0\cdot 2^{\ell}$ for all $\ell\geq 0$, we see that 
	\[ 
	f(x^t)-f(x^*) = f(x^{\ell, k_{\ell}}) - f(x^*) 
	\leq \frac{2R^2}{\max\left\{\max\left\{(\widetilde{K}_{j-1}+1)
		^2\right\}_{0\leq j\leq \ell-1}, (k_{\ell}+1)^2\right\}}.
	\]
	Moreover, since $t = \sum_{j=0}^{\ell-1}\widetilde{K}_j + k_\ell \geq \sum_{j=0}^{\ell-1}\widetilde{K}_j \geq \sum_{j=0}^{\ell-1}K_j = \frac{K_0(2^\ell-1)}{2-1}\geq 2^{\ell+1}-2$, we can check that the number of restarts $\ell$ satisfies that 
	\[
	\ell\leq \log_2(t+2) - 1. 
	\]
	As a consequence, we have that 
	\[
	\max\left\{\max\left\{(\widetilde{K}_{j-1}+1)
	^2\right\}_{0\leq j\leq \ell-1}, (k_{\ell}+1)^2\right\} \geq \frac{t}{\ell+1} \geq \frac{t}{\log_2(t+2)},
	\]
	which implies that 
	\[
	f(x^t)-f(x^*) \leq \frac{2R^2(\log_2t)^2}{t^2},\quad \forall\; t\geq 2.
	\]
	This completes the proof.
\end{proof}

We emphasize that the positive definiteness of $Q$ is necessary in our analysis since it ensures the existence of the positive constant $R$ which serves as an upper bound for the diameter of the level set $\mathcal{L}(x^0):=\{x\in \R^n\;:\; f(x)\leq f(x^0)\}$. In practice, we observe that Algorithm \ref{alg-apg-re} still works efficiently for linear systems with $Q\in \S_+^n$. {We should emphasize that the condition $\inprod{Qy^t - b}{x^t - x^{t-1}} \geq 0$ is not used in the convergence analysis. Instead, it is monitored in the algorithm as a heuristic to decide when to restart the extrapolation, which we found to improve practical efficiency. In contrast, the key to our convergence proof is the ability to avoid frequent restarts, which is ensured by enforcing the condition $t > K_{\rm} + K_\ell$.} Moreover, the convergence rate given by Theorem \ref{thm-convergence-re} introduces a $\log_2^2(t)$ term which is worse than the one given by Theorem \ref{thm-convergence}. Although Algorithm \ref{alg-apg-re} has a poorer 
worst-case convergence rate than that of Algorithm \ref{alg-apg}, it has better practical numerical performance empirically. It remains an open problem whether the restarting scheme admits better theoretical convergence properties. {Moreover, for a given problem, determining the optimal timing for restarting remains an open question that warrants further investigation. More advanced analytical tools and fully exploring the problem structure are required to bridge these gaps, which we leave as a direction for future research}. 

\section{A new proximal term for parallelization} \label{section-proximal-term}

In this section, we show how to design a new proximal term such that the resulting algorithm has a Jacobi-type updating rule. Since our main focus of this paper is to solve large-scale linear systems, it is desirable to simulate an environment of parallel/distributed platform. To this end, we shall partition the coefficient matrix $Q$ as follows:
\[
Q = \begin{pmatrix}
	Q^{1,1} & Q^{1,2} & \dots  & Q^{1,s} \\
	Q^{2,1} & Q^{2,2} & \dots  & Q^{2,s} \\
	\vdots  & \vdots  & \ddots & \vdots  \\
	Q^{s,1} & Q^{s,2} & \dots  & Q^{s,s}
\end{pmatrix} = L + D + L^T,
\]
with $Q^{i,j} = (Q^{j,i})^T\in \R^{n_i\times n_j}$, for $1\leq i,j\leq s$, $\sum_{i = 1}^sn_i = n$, and 
\[
L := \begin{pmatrix}
	0 & 0 & \dots & 0 \\
	Q^{2,1} & 0 & \dots & 0 \\
	\vdots & \vdots & \ddots & \vdots \\
	Q^{s,1} & Q^{2,s} & \dots & 0
\end{pmatrix},\quad D:= \diag\left(Q^{1,1},\dots,Q^{s,s}\right).
\]
According to the above partition, we also decompose any vectors $x \in \R^n$ as follows:
\[
x = \begin{pmatrix}
	x^{1} \\ \vdots \\ x^{s}
\end{pmatrix} \in \R^n,\quad x^{i} \in \R^{n_i},\; i = 1,\dots, s.
\]

Now, we consider choosing a matrix $J$ as a block-diagonal matrix, i.e., $J = \diag(J^{1,1}, \dots, J^{s,s})$ with $J^{i,i}$ being a symmetric positive definite matrix. Let $S:= J - Q$. Then, to get $x^t$ in Line 3 of Algorithm~\ref{alg-apg-re}, it suffices to solve the following linear system
\[
x^t = y^t + J^{-1}(b - Qy^t).
\]
which is indeed a Jacobi-type updating rule and can readily be implemented in a parallel manner. Assuming that $S$ is positive semidefinite, the template of the parallel version using the above matrix $S$ with an adaptive restarting strategy can be given in Algorithm \ref{alg-jacobi}. 

\begin{algorithm}[htb!]
	\begin{algorithmic}[1]
		\STATE \textbf{Input:} Initial points $x^{0} = y^1 \in \R^n$, $\alpha_0 = 0,\;\alpha_1 = 1$, {the matrix $J:=\diag(J_{1}, \dots, J_{s})\in \S_{++}^n$ defined by eq.(\ref{eq-J-diag}) or (\ref{eq-J-blkdiag})}, $\textrm{IsRestart}\in \{\textrm{True},\textrm{False}\}$, $\ell = 0$, $K_{\ell}\geq 2$ and $K_{\textrm{re}} = 0$.
		\FOR{$t\geq 1$}
		\FOR{$i = 1,2,\dots,s$ \textbf{parallel}}
		\STATE $(x^t)^i = (y^t)^i + (J_{i})^{-1}\left(b^i - \sum_{j = 1}^sQ^{i,j}(y^t)^j\right)$.
		\ENDFOR
		\IF{$t > K_{\textrm{re}} + K_{\ell}$ and $\inprod{Qy^t - b}{x^t - x^{t-1}} \geq  0$ and IsRestart}
		\STATE $K_{\textrm{re}} = t$.
		\STATE $K_{\ell+1} = 2K_{\ell}$.
		\STATE $\ell = \ell + 1$.
		\STATE $\alpha_{t+1} = 1$, $\alpha_t = 0$.
		\STATE $y^{t+1} = x^{t-1}$, $x^t = x^{t-1}$.
		\ELSE
		\STATE $\alpha_{t+1} = \frac{1+\sqrt{1+4\alpha_t^2}}{2}$.
		\STATE $y^{t+1} = \left(1+\frac{\alpha_t - 1}{\alpha_{t+1}}\right)x^{t} - \frac{\alpha_t - 1}{\alpha_{t+1}}x^{t-1}$.
		\ENDIF 
		\ENDFOR 
		\STATE \textbf{Output:} $x^t$
	\end{algorithmic}
	\caption{A parallel Nesterov's accelerated Jacobi-type method with an adaptive restarting strategy.}
	\label{alg-jacobi}
\end{algorithm}

To apply the convergence result given by Theorem \ref{thm-convergence}, we require that $S\in \S_+^n$. And to apply the convergence result given by Theorem \ref{thm-convergence-re}, we require that $Q\in \S_{++}^n$ and $S\in \S_+^n$. We note that the 
semi-definiteness of the matrix $S$ can be easily satisfied. For example, if we want $J$ to be a diagonal matrix, we may set its diagonal entries as 
\begin{equation}
	\label{eq-J-diag}
	J_{kk}:= Q_{kk} + \sum_{j\neq k}\left\lvert Q_{kj}\right\rvert ,\quad k = 1,\dots,n.
\end{equation}
On the other hand, we can define
\begin{equation}
	\label{eq-J-blkdiag}
	J^{i,i} = Q^{i,i} + \sum_{j \neq i}\norm{Q^{i,j}}_2I_{n_i},\quad i = 1,\dots, s,
\end{equation}
to get a block diagonal $J$. For both cases, one can check that if $Q$ contains no zero row/column, then $S\in \mathbb{S}_+^n$ and $J\in\mathbb{S}^{n}_{++}$, respectively.

\begin{remark}\label{remark-preconditioner}
    While our framework does not explicitly incorporate the same preconditioning techniques used in the conjugate gradient method, we argue that the matrix $J$ can be interpreted as a preconditioner. In this work, we primarily focus on block-diagonal matrices for $J$, as they enable efficient parallel implementation. However, more advanced strategies can be developed, provided that $J$ is invertible, and the matrix $S=J-Q$ remains positive semidefinite. The design of $J$ is highly problem-specific and depends on the underlying structure and requirements of the application. As a consequence, exploring potential problem structures within a specific application domain can be a more effective approach.
\end{remark}

\section{Numerical experiments} \label{section-experiments}

In this section, we conduct numerical experiments to verify the efficiency and scalability of the proposed algorithms. The outline and main goals of our numerical experiments are summarized as follows:
\begin{enumerate}
	\item We first construct a set of simple strictly diagonal dominant systems for which the classical Jacobi and weighted Jacobi methods are both guaranteed to be convergent. The purpose of this set of numerical tests is to show that our proposed algorithms outperform the classical Jacobi-type methods. 
	\item Next, in order to evaluate the performance on SPSD systems, we consider the Laplacian systems with underlying graph obtain from real applications. We compare the proposed algorithm against the CG method and the preconditioned CG method with a simple diagonal preconditioner. Since the (preconditioned) CG method is recognized as one of the most efficient iterative methods, our goal is to show that the proposed method has competitive practical performance. 
	\item We repeat the above tests on some {symmetric positive definite} systems with real data sets in order to evaluate the performance of the proposed algorithm on {symmetric positive definite} systems.
	\item Finally, we implement our algorithm on a parallel/distributed platform to evaluate the scalability of the algorithm that is beneficial from parallel and distributed computing. 
\end{enumerate}

In our experiment, we only test Algorithm \ref{alg-jacobi} with $J$ chosen as in \eqref{eq-J-diag}, since, based on our numerical experience, Algorithm \ref{alg-jacobi} with $J$ chosen as in \eqref{eq-J-blkdiag} shares a similar performance as the former one. We terminate all the tested methods based on the value of the relative {objective gap, i.e., $\frac{|f(x^t) - f^*|}{1 + |f^*|}$}, where $x^t$ is the generated iterate at $t$-th iteration of an algorithm. In particular, we set the termination tolerance $\texttt{tol}$ to {$10^{-10}$}. We also set the maximum number of iterations allowed, $\texttt{maxiter}$, to be $5000$.

The update rules for each algorithm are summarized in Table \ref{tab-complexity}. For the accelerated Jacobi method, we choose $J$ to be a diagonal matrix. As demonstrated, the differences in per-iteration computational costs are negligible, making a comparison based solely on the number of iterations sufficient. Additionally, the practical performance heavily depends on the implementation of matrix-vector multiplications. Leveraging state-of-the-art implementations can further optimize performance, but this aspect lies beyond the scope of this paper. In view of the above, we have excluded a comparison of wall-clock time. 

\begin{table}[h]
        \centering
        \begin{tabular}{c|c|c}\hline 
            Method & Update ($t\geq 0$) & Complexity\\ \hline 
            Jacobi & $x^{t+1} = x^t + D^{-1}(b - Qx^t)$ & $O(n^2)$ \\ \hline 
            Weighted Jacobi & $x^{t+1} = x^t + \omega D^{-1}(b - Qx^t)$ & $O(n^2)$ \\ \hline 
            \multirow{2}{*}{Accelerated Jacobi} & $x^{t+1} = y^t + J^{-1}(b - Qx^t)$ & \multirow{2}{*}{$O(n^2)$} \\
            & $y^{t+1} = \left(1 + \frac{\alpha_t-1}{\alpha_{t+1}}\right)x^t - \frac{\alpha_t-1}{\alpha_{t+1}}x^{t-1}$ &  \\ \hline 
            \multirow{3}{*}{Conjugate Gradient} &$ x^{t+1} = x^t + \frac{\langle r^t, r^t\rangle}{\langle p^t, Qp^t\rangle}p^t$ & \multirow{3}{*}{$O(n^2)$} \\  
            & $r^{t+1} = r^t - \frac{\langle r^t, r^t\rangle}{\langle p^t, Qp^t\rangle} Qp^t$ & \\ 
            & $p^{t+1} = r^{t+1} + \frac{\langle r^{t+1}, r^{t+1}\rangle}{\langle r^t, r^t\rangle}p^t$ & \\ \hline
        \end{tabular}
        \caption{Testing methods and their per-iteration complexities.}
        \label{tab-complexity}
    \end{table}

All the numerical experiments in Section \ref{subsection-sdd}, Section \ref{subsection-laplacian}, and Section \ref{subsection-spd} are conducted on a laptop with 2.2 GHz Quad-Core Intel Core i7 processor with 16 GB 1600 MHz DDR3 Memory, and the speed-up tests conducted in Section \ref{subsection-speedup} are performed on the Zaratan cluster operated by the University of Maryland. The cluster has 360 compute nodes, each with dual 2.45 GHz AMD 7763 64-core CPUs and HDR-100 (100 Gbit) Infiniband interconnects between the nodes, with storage and service nodes connected with full HDR (200 Gbit).

\subsection{Strictly diagonal dominant systems} \label{subsection-sdd}
Similar to \cite{zhang2009cuda}, we consider the following strictly diagonal dominant symmetric system:
\[
Q:= \begin{pmatrix}
	n & -1 & -1 & \cdots & -1 \\ 
	-1 & n & -1 & \cdots & -1 \\
	\vdots & \vdots & \vdots & \ddots & \vdots \\
	-1 & -1 & -1 & \cdots & n
\end{pmatrix}\in \S^n,\quad b = \begin{pmatrix}
	1 \\ 1 \\ \vdots \\ 1
\end{pmatrix}\in \R^n,
\]
where $n$ is the dimension of $Q$. It is known that both the classical Jacobi method and weighted Jacobi method converge when solving this linear system. For the weighted Jacobi method, we compute the optimal weight $\omega_{\textrm{opt}} = \frac{2}{\lambda_{\textrm{min}}(D^{-1}Q) + \lambda_{\textrm{max}}(D^{-1}Q)}$ in order to obtain the optimal performance. We test six systems with matrix orders $n\in \{1000, 2000, 3000, 4000, 5000, 6000\}$. The computational results are presented in Figure \ref{fig-sdd}.

\begin{figure}[htb!]
	\centering
	\begin{subfigure}[b]{0.3\textwidth}
		\centering
		\includegraphics[width=\textwidth]{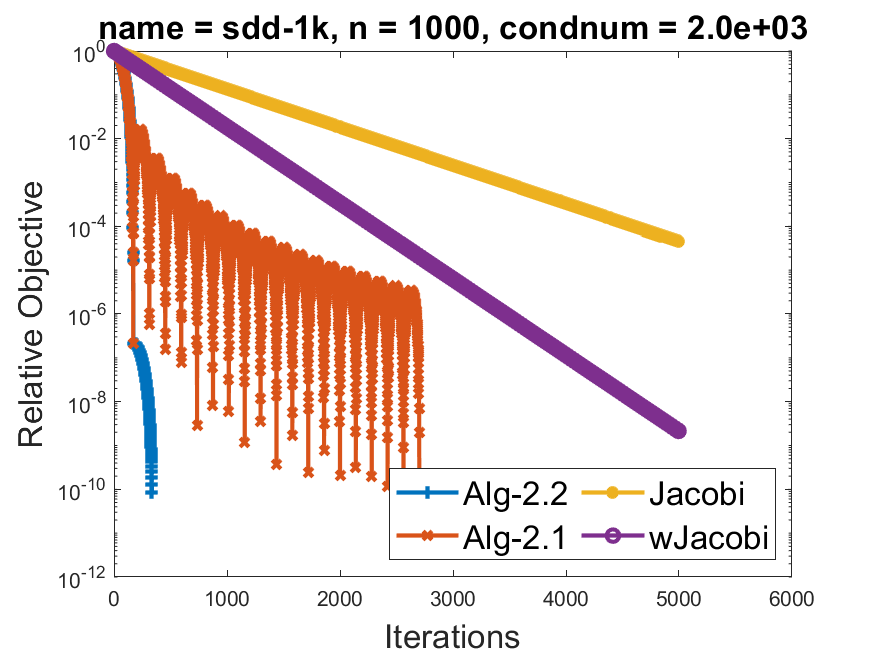}
		\subcaption{$n = 1000$}
	\end{subfigure}
	\hfill
	\begin{subfigure}[b]{0.3\textwidth}
		\centering
		\includegraphics[width=\textwidth]{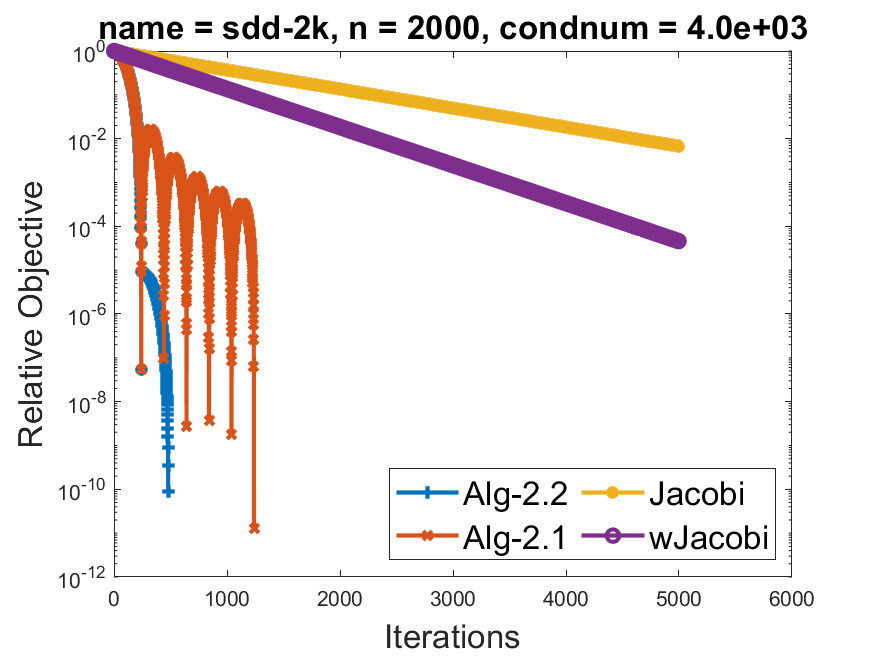}
		\subcaption{$n = 2000$}
	\end{subfigure}
	\hfill
	\begin{subfigure}[b]{0.3\textwidth}
		\centering
		\includegraphics[width=\textwidth]{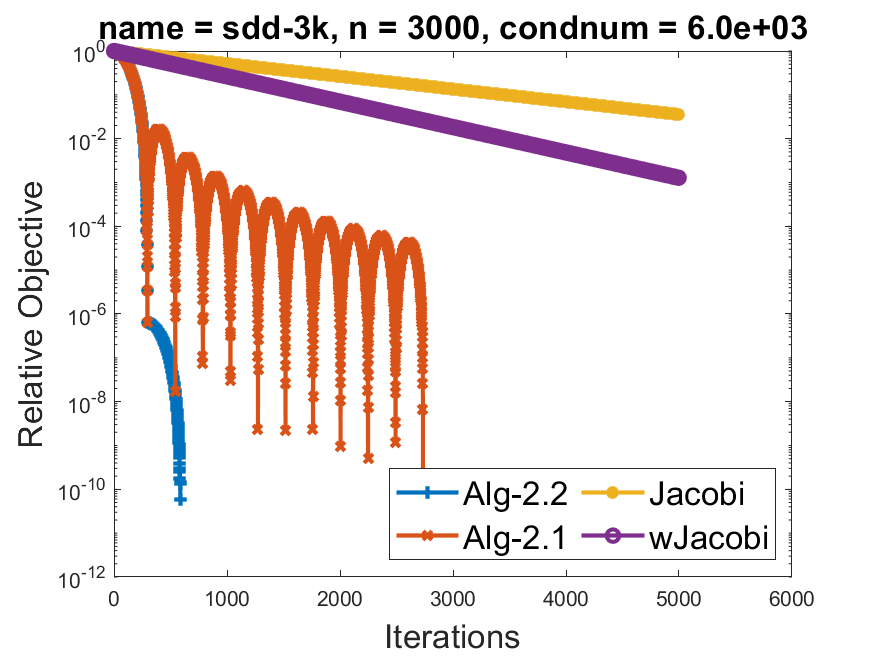}
		\subcaption{$n = 3000$}
	\end{subfigure}
	\hfill \\
	\begin{subfigure}[b]{0.3\textwidth}
		\centering
		\includegraphics[width=\textwidth]{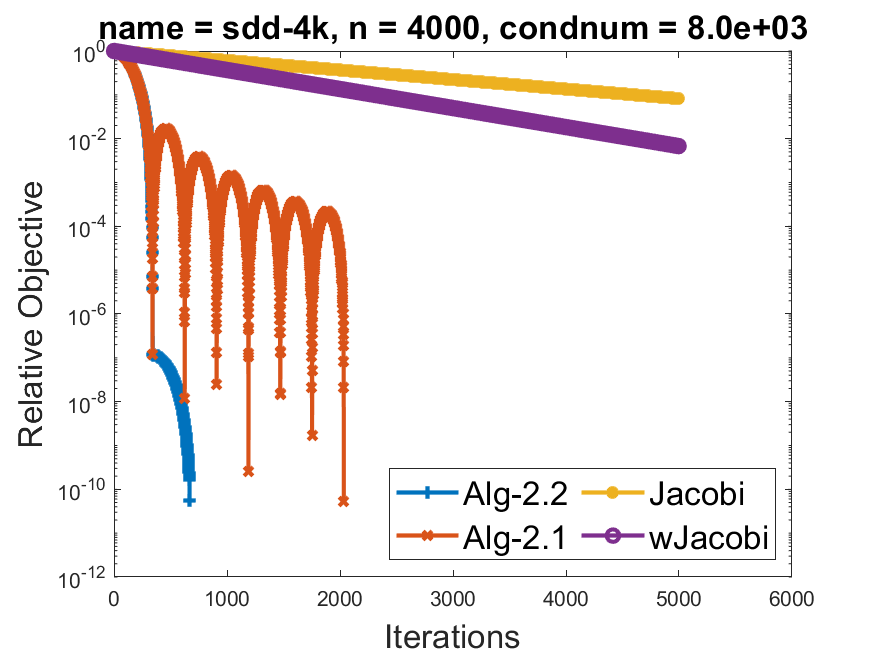}
		\subcaption{$n = 4000$}
	\end{subfigure}
	\hfill 
	\begin{subfigure}[b]{0.3\textwidth}
		\centering
		\includegraphics[width=\textwidth]{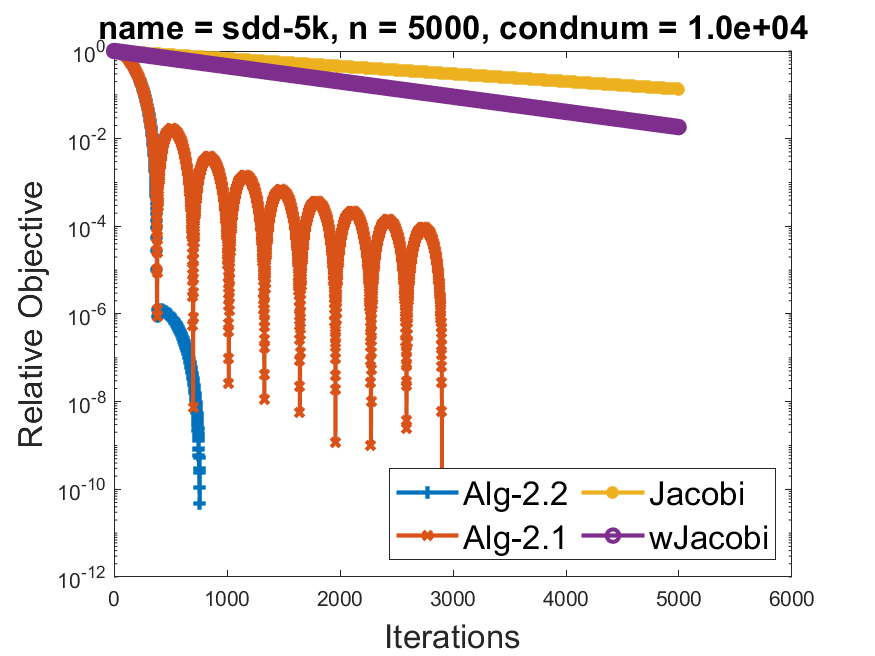}
		\subcaption{$n = 5000$}
	\end{subfigure}
	\hfill
	\begin{subfigure}[b]{0.3\textwidth}
		\centering
		\includegraphics[width=\textwidth]{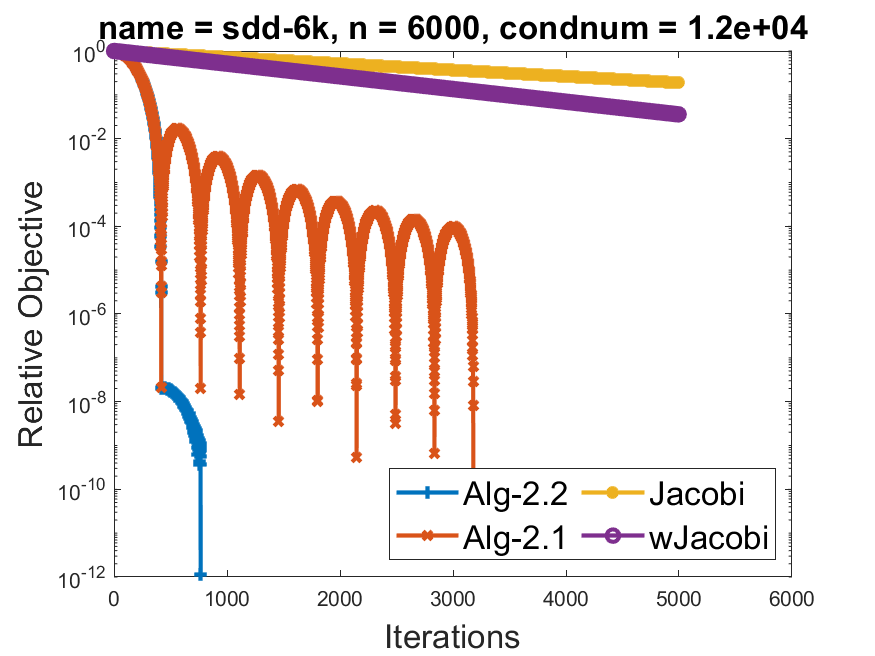}
		\subcaption{$n = 6000$}
	\end{subfigure}
	\caption{Computational results for strictly diagonally dominant systems.}
	\label{fig-sdd}
\end{figure}

From Figure \ref{fig-sdd}, we observe that both the Jacobi method and the weighted Jacobi method are convergent and admit linear convergence rates. Moreover, the weighted Jacobi method with the optimal weight has a faster convergence rate than that of the classical Jacobi method. However, both methods are not able to solve most problems to the desired accuracy {within 5000 iterations}. On the other hand, the proposed algorithms, {i.e., Algorithm \ref{alg-apg} and Algorithm \ref{alg-apg-re}} significantly outperform the other two methods and are able to solve all the problems successfully to the desired accuracy. {Furthermore, by leveraging the adapative restarting scheme, the overall performance is improved significantly.} These provide strong evidence that the Nesterov's accelerated technique equipped with the adaptive restarting starting strategy is indeed efficient. 

\subsection{Laplacian systems} \label{subsection-laplacian} 

Let $G=(V,E)$ be a simple graph with the set of nodes $V:=\{v_1,\dots, v_n\}$ and the set of edges $E=\{(i,j)\;:\; \textrm{$v_i$ is adjacent to $v_j$}, \;i\neq j\}$. The Laplacian matrix $L(G)\in \R^{n\times n}$ of $G$ is defined as 
\[
[L(G)]_{ij}:= \left\{
\begin{array}{ll}
	\textrm{deg}(v_i), & \textrm{if $i = j$}, \\
	-1, & \textrm{if $i\neq j$ and $(i,j)\in E$}, \\
	0, & \textrm{otherwise,}
\end{array}
\right. \quad 1\leq i, j\leq n,
\]
where $\textrm{deg}(v_i)$ denotes the degree of the node $v_i$, for $1\leq i \leq n$. It is well-known that the Laplacian matrix $L(G)$ is symmetric positive semidefinite and singular with $\textrm{rank}(L(G)) = n-1$. Solving Laplacian linear systems is 
a fundamental computational task with fruitful applications in a wide range of fields. 
The classical Jacobi method and the weighted Jacobi method are not guaranteed to be convergent when solving Laplacian systems. In fact, our numerical tests show that both methods are divergent empirically. Therefore, we only compare the proposed {Algorithm \ref{alg-apg-re}} with the CG method and the PCG method with a simple diagonal preconditioner. We select a set of graphs arising from real-world applications in the \texttt{SuiteSparse Matrix Collection}~\footnote{Available at \url{https://sparse.tamu.edu/}.} with various sizes. The corresponding numerical results are presented in Figure \ref{fig-laplacian}.

\begin{figure}[htb!]
	\centering
	\begin{subfigure}[b]{0.3\textwidth}
		\centering
		\includegraphics[width=\textwidth]{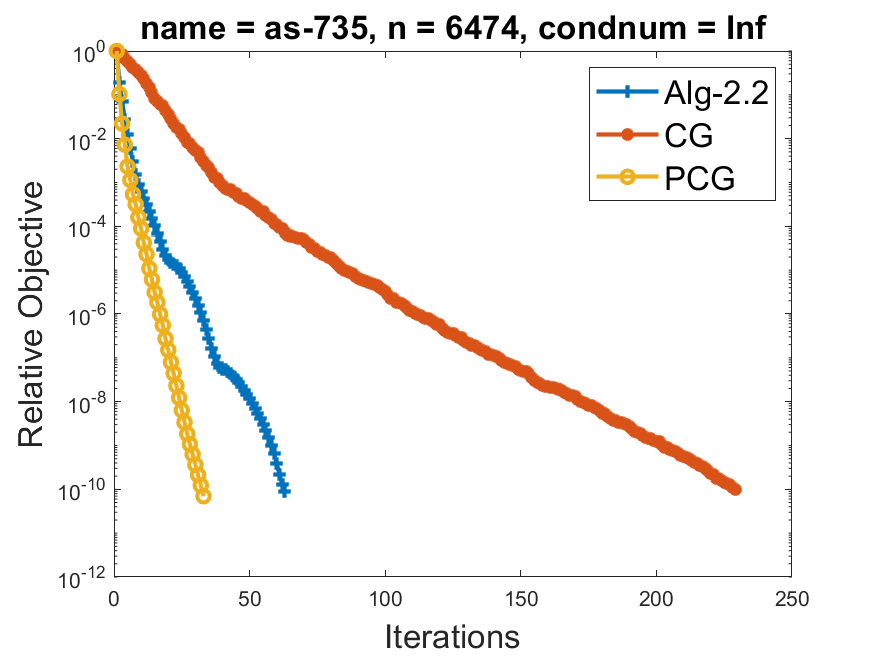}
		\subcaption{as-735}
	\end{subfigure}
	\hfill
	\begin{subfigure}[b]{0.3\textwidth}
		\centering
		\includegraphics[width=\textwidth]{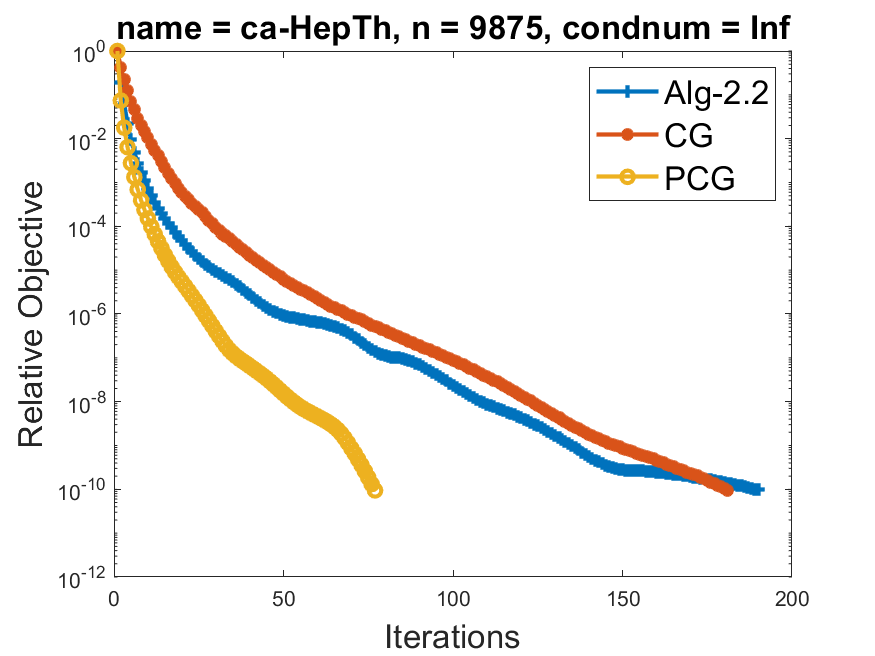}
		\subcaption{ca-HepTh}
	\end{subfigure}
	\hfill
	\begin{subfigure}[b]{0.3\textwidth}
		\centering
		\includegraphics[width=\textwidth]{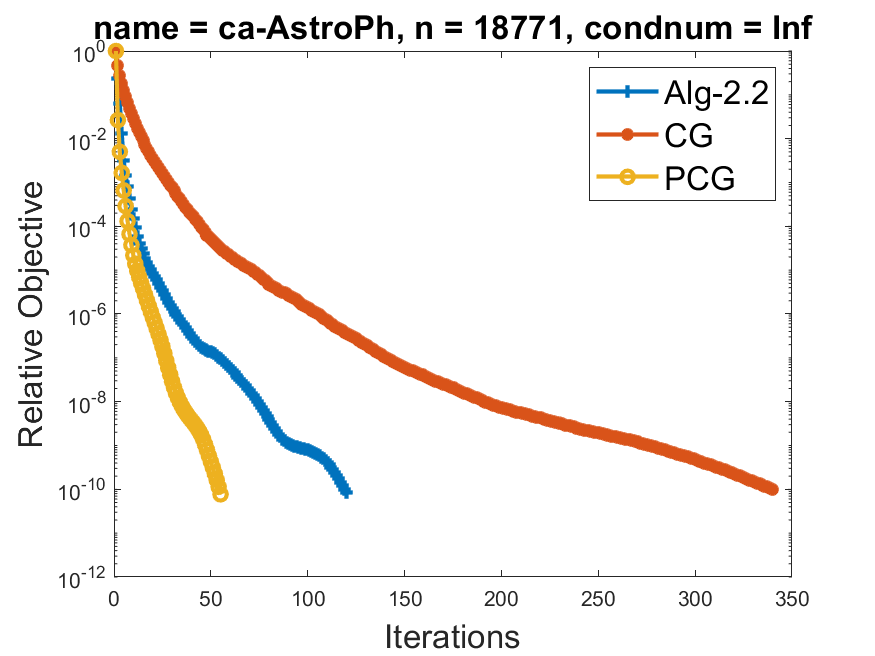}
		\subcaption{ca-AstroPh}
	\end{subfigure}
	\hfill \\
	\begin{subfigure}[b]{0.3\textwidth}
		\centering
		\includegraphics[width=\textwidth]{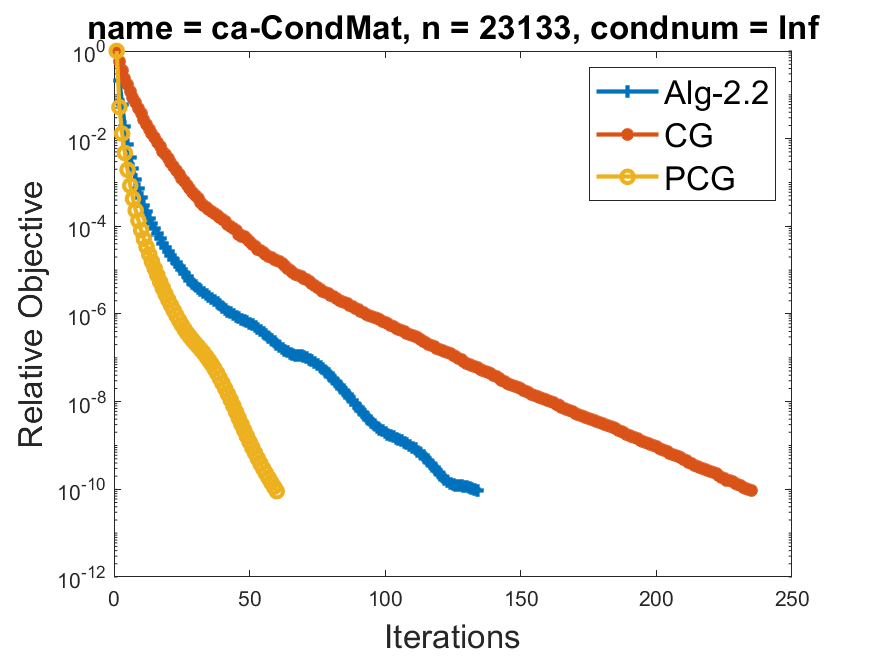}
		\subcaption{ca-CondMat}
	\end{subfigure}
	\hfill 
	\begin{subfigure}[b]{0.3\textwidth}
		\centering
		\includegraphics[width=\textwidth]{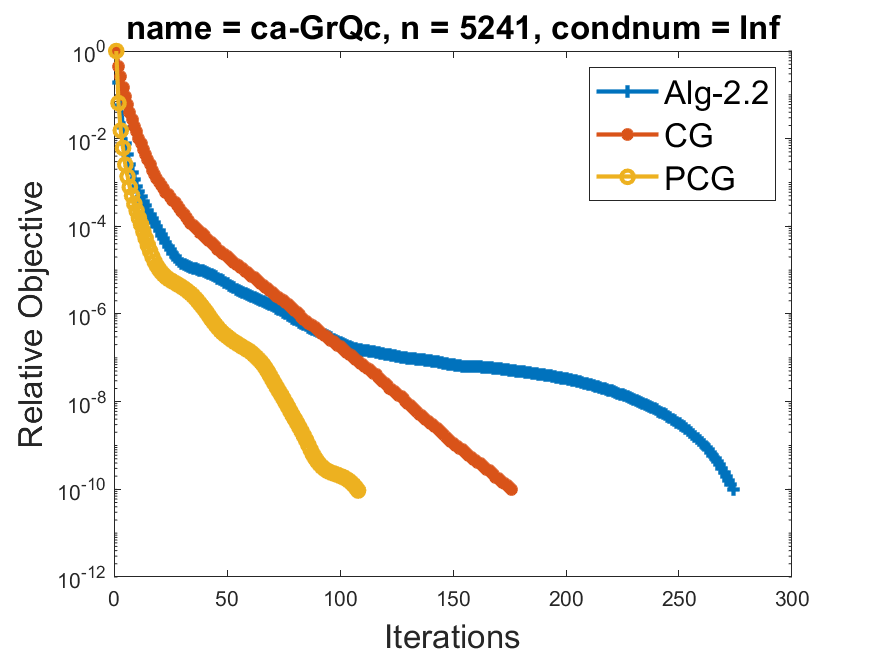}
		\subcaption{ca-GrQc}
	\end{subfigure}
	\hfill
	\begin{subfigure}[b]{0.3\textwidth}
		\centering
		\includegraphics[width=\textwidth]{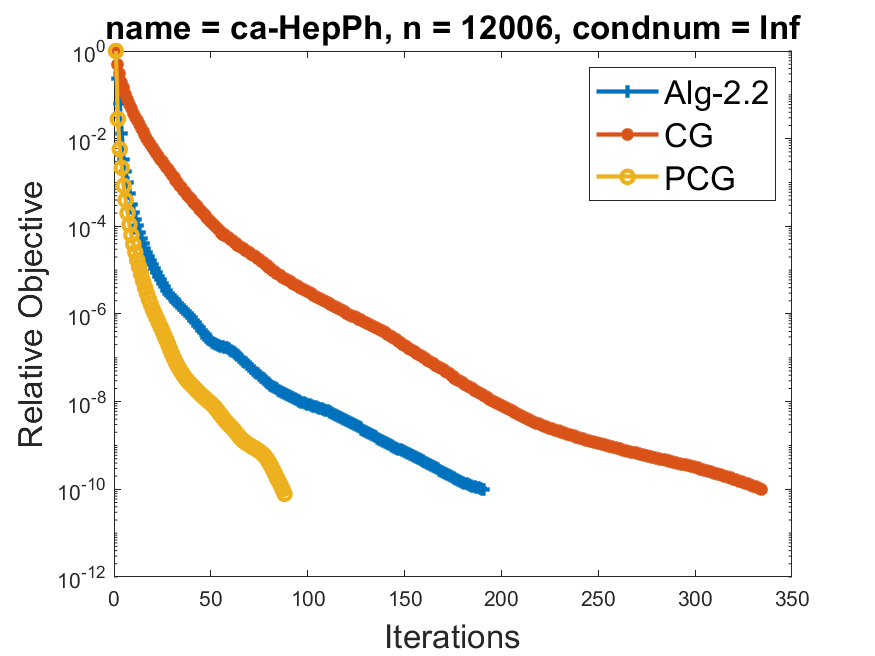}
		\subcaption{ca-HepPh}
	\end{subfigure}
	\hfill \\
	\begin{subfigure}[b]{0.3\textwidth}
		\centering
		\includegraphics[width=\textwidth]{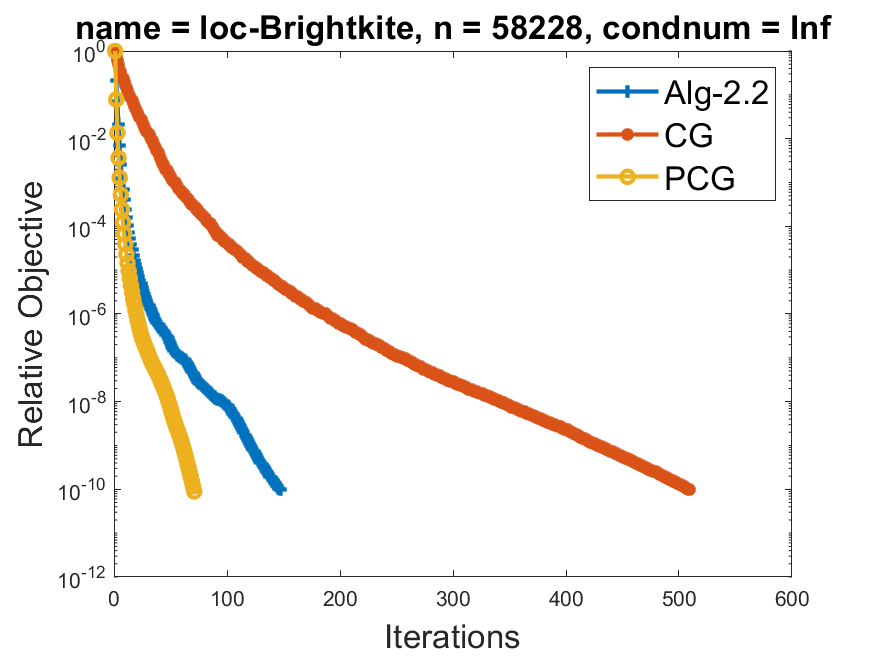}
		\subcaption{loc-Brightkite}
	\end{subfigure}
	\hfill
	\begin{subfigure}[b]{0.3\textwidth}
		\centering
		\includegraphics[width=\textwidth]{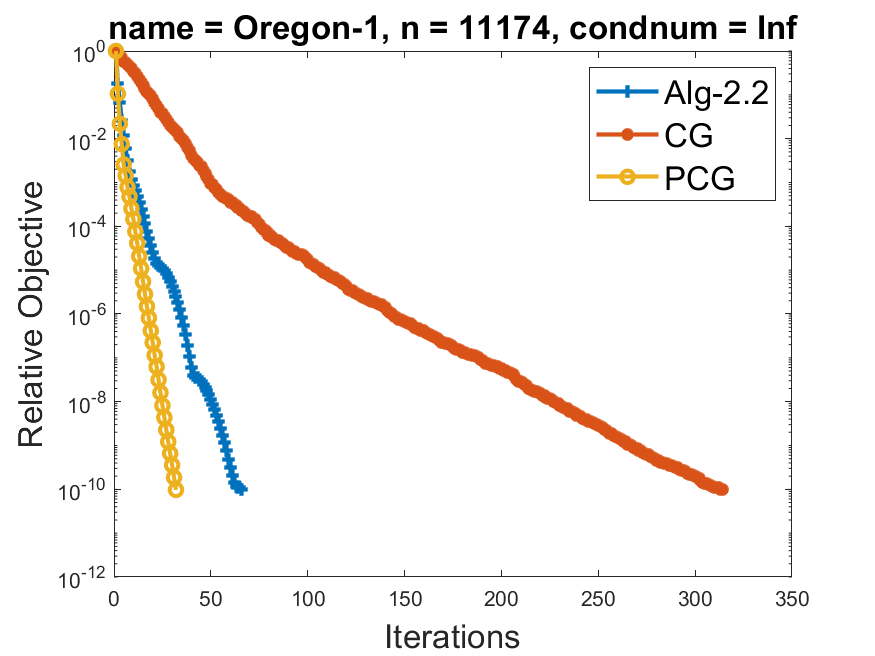}
		\subcaption{Oregon-1}
	\end{subfigure}
	\hfill
	\begin{subfigure}[b]{0.3\textwidth}
		\centering
		\includegraphics[width=\textwidth]{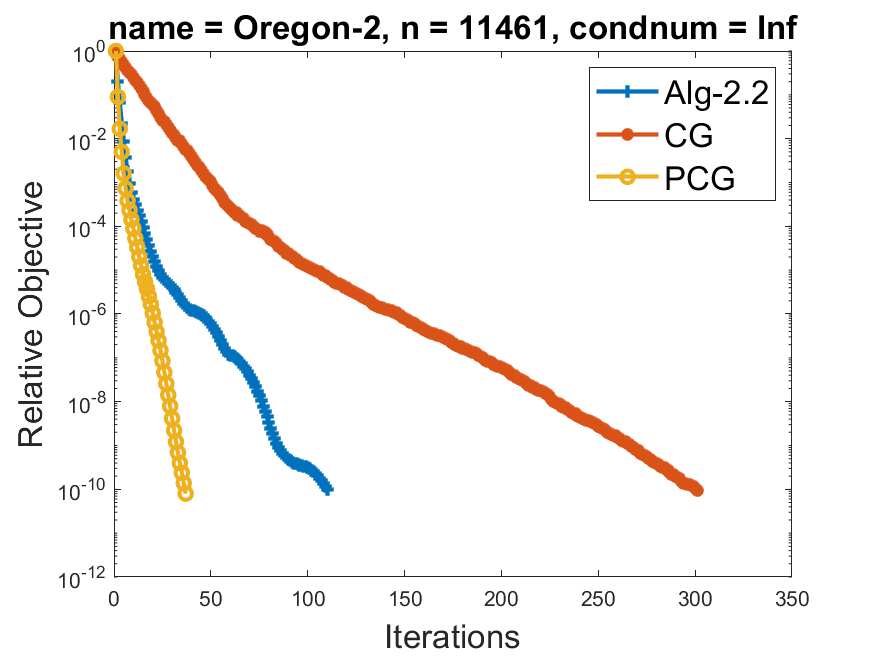}
		\subcaption{Oregon-2}
	\end{subfigure}
	\caption{Computational results for Laplacian systems.}
	\label{fig-laplacian}
\end{figure}

We can observe from Figure \ref{fig-laplacian} that all the three methods are quite efficient and are able to solve all the tested problems to the specified accuracy. We can also see that {Algorithm \ref{alg-apg-re}} and \texttt{PCG} have shown superior performance when compared to the pure CG method, i.e., \texttt{CG}. However, Algorithm \ref{alg-apg-re} performs slightly worse than the \texttt{PCG} method. In particular, the number of iterations taken by Algorithm \ref{alg-apg-re} is sometimes more than twice the number of iterations taken by \texttt{PCG}.  

\subsection{{Symmetric positive definite} systems} \label{subsection-spd}

We also test Algorithm \ref{alg-apg-re}, \texttt{CG} and \texttt{PCG} on {symmetric positive definite} systems whose coefficient matrices are also collected from the SuiteSparse Matrix Collection. These matrices are real-world data coming from different practical science and engineering problems. The numerical results presented in Figure \ref{fig-suitesparse} show that Algorithm \ref{alg-apg-re} {performs worse than} \texttt{PCG} and generally performs better than \texttt{CG}. 

{To assess the impact of the preconditioner, we also tested PCG equipped with the same diagonal preconditioner~$J$ used in our proposed method, where $J$ is defined as the vector of row (or column) absolute sums of the matrix~$Q$. Based on our observation, the resulting variant exhibits performance nearly identical to that of the vanilla CG method. For simplicity, the detailed computational results are not included in the paper. This suggests that the observed performance differences are not primarily due to the choice of preconditioner. While using the diagonal of~$Q$ as~$J$ may seem natural, it can violate the condition required by our analysis. However, problem-specific choices of~$J$ may offer further improvements in efficiency.
}

\begin{figure}[htb!]
	\centering
	\begin{subfigure}[b]{0.3\textwidth}
		\centering
		\includegraphics[width=\textwidth]{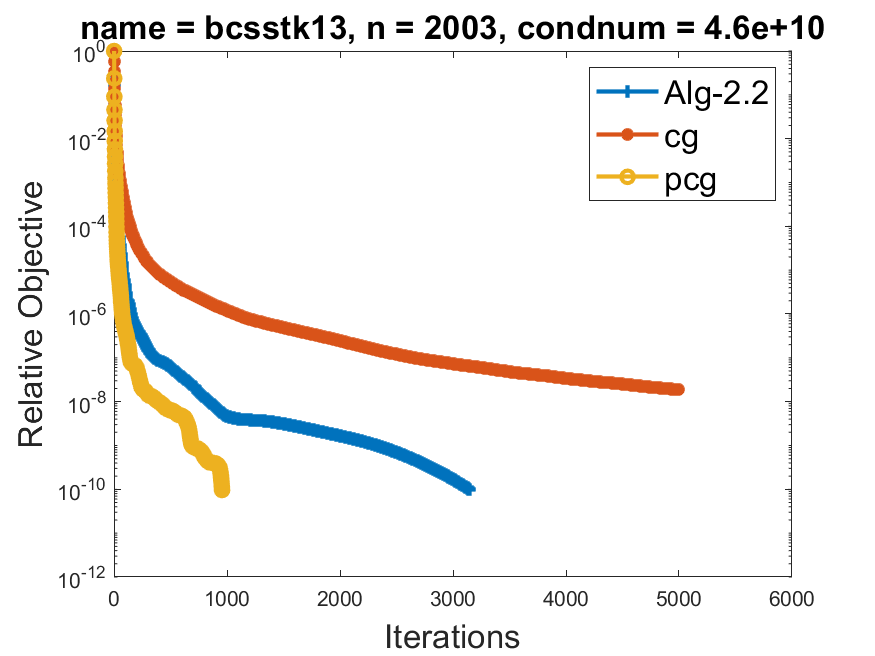}
		\subcaption{bcsstk13}
	\end{subfigure}
	\hfill 
	\begin{subfigure}[b]{0.3\textwidth}
		\centering
		\includegraphics[width=\textwidth]{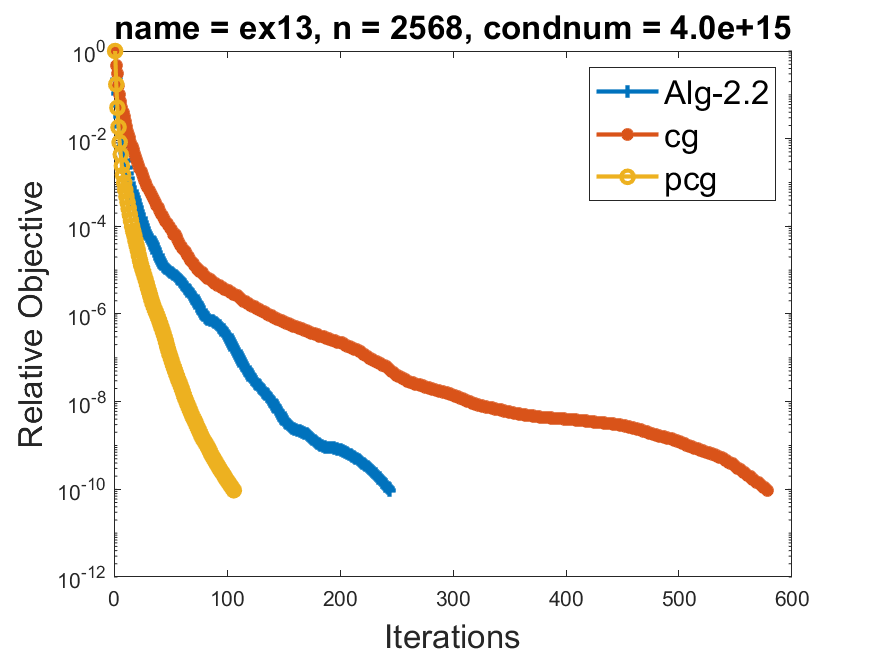}
		\subcaption{ex13}
	\end{subfigure}
	\hfill
	\begin{subfigure}[b]{0.3\textwidth}
		\centering
		\includegraphics[width=\textwidth]{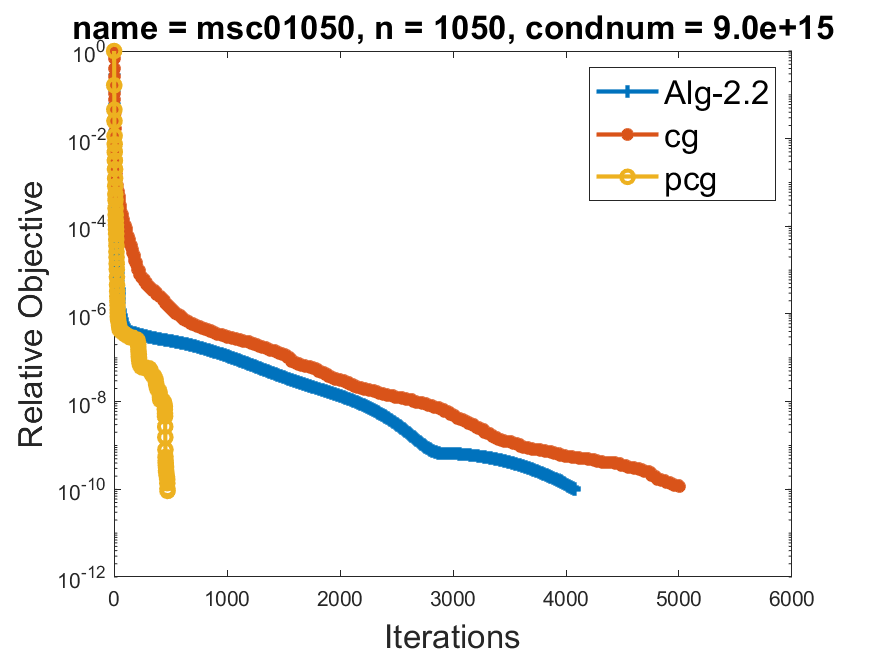}
		\subcaption{msc01050}
	\end{subfigure}
	\hfill \\
	\begin{subfigure}[b]{0.3\textwidth}
		\centering
		\includegraphics[width=\textwidth]{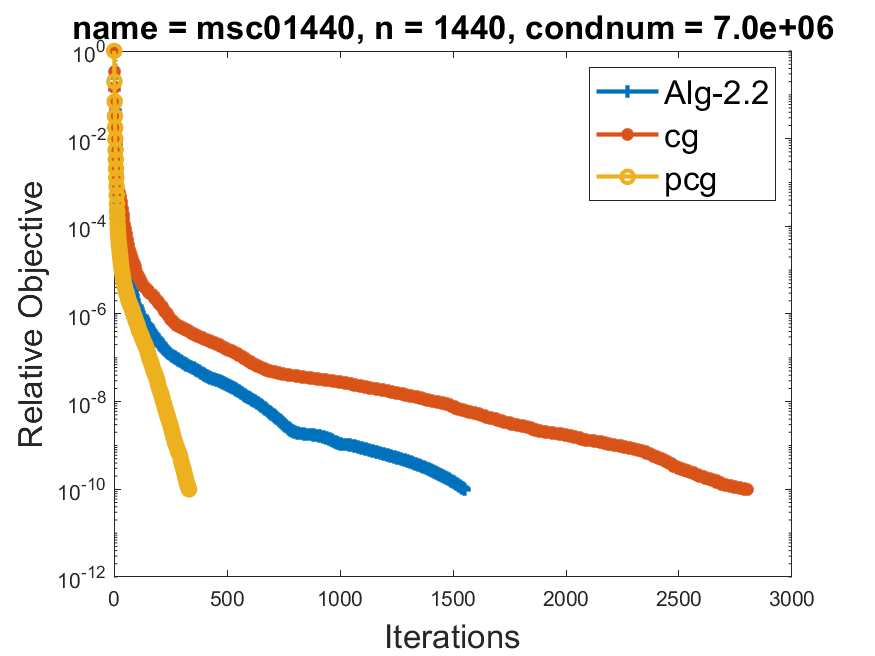}
		\subcaption{msc01440}
	\end{subfigure}
	\hfill
	\begin{subfigure}[b]{0.3\textwidth}
		\centering
		\includegraphics[width=\textwidth]{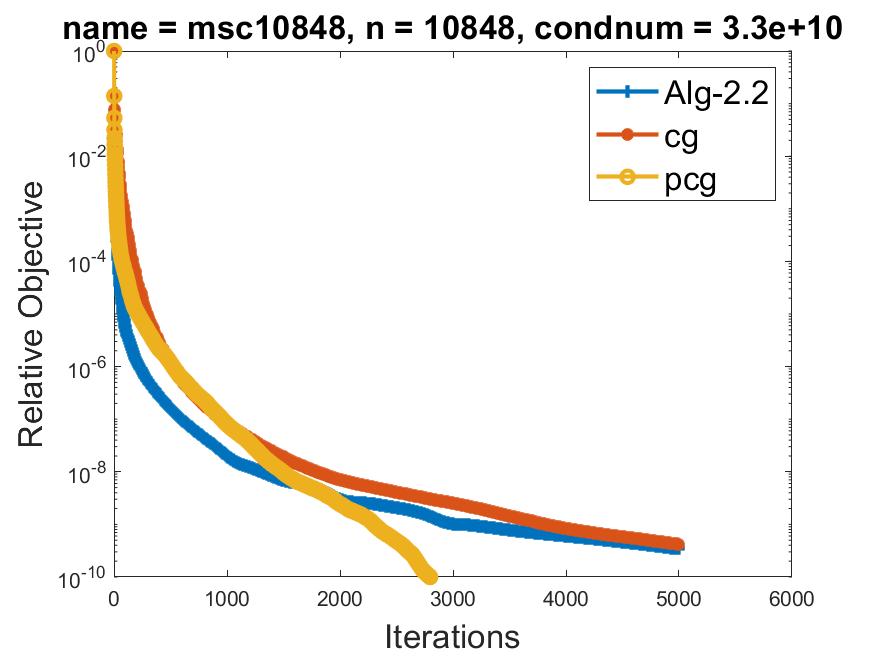}
		\subcaption{msc10848}
	\end{subfigure}
	\hfill
	\begin{subfigure}[b]{0.3\textwidth}
		\centering
		\includegraphics[width=\textwidth]{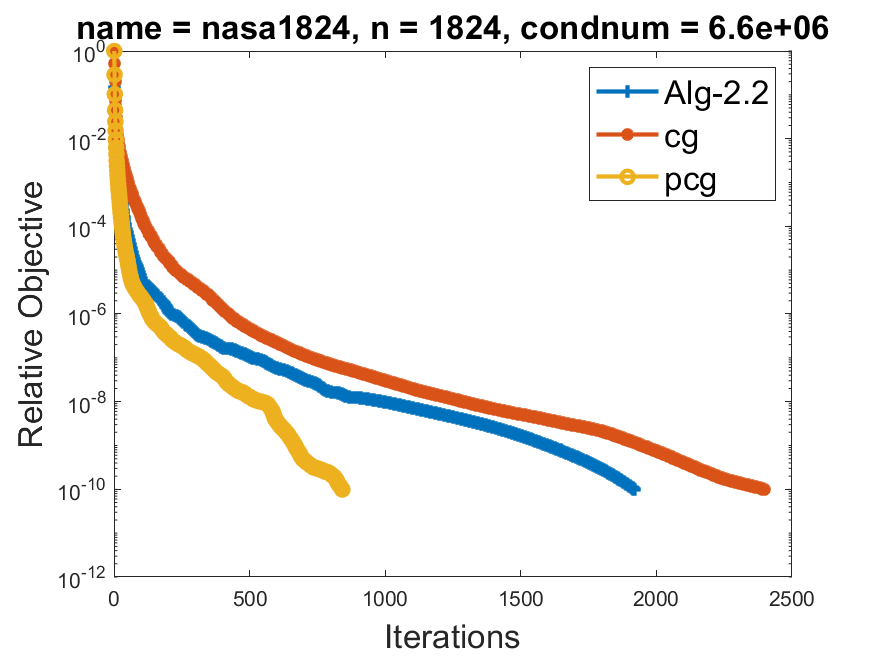}
		\subcaption{nasa1824}
	\end{subfigure} 
    \hfill \\
    \begin{subfigure}[b]{0.3\textwidth}
		\centering
		\includegraphics[width=\textwidth]{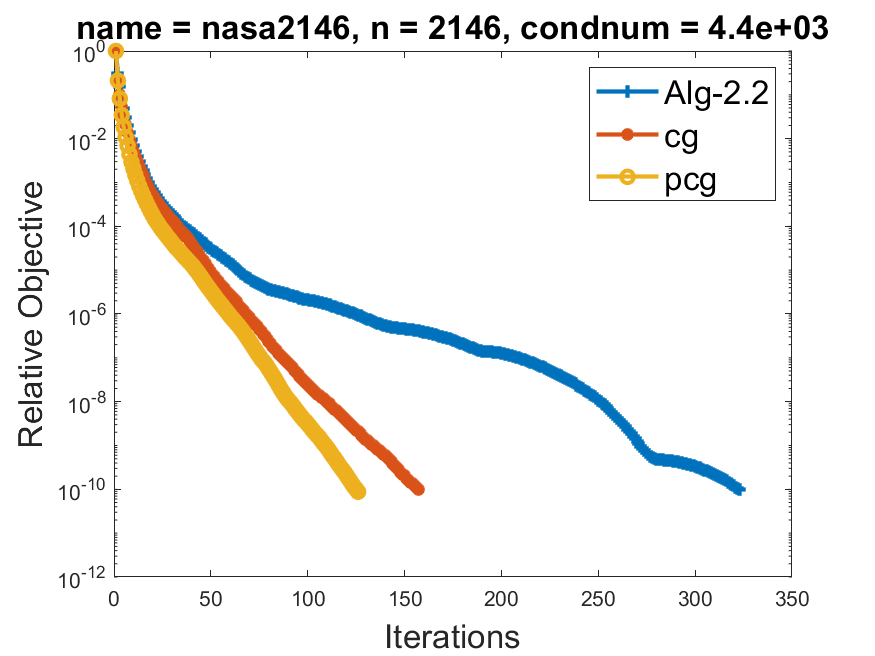}
		\subcaption{nasa2146}
	\end{subfigure}
	\hfill
	\begin{subfigure}[b]{0.3\textwidth}
		\centering
		\includegraphics[width=\textwidth]{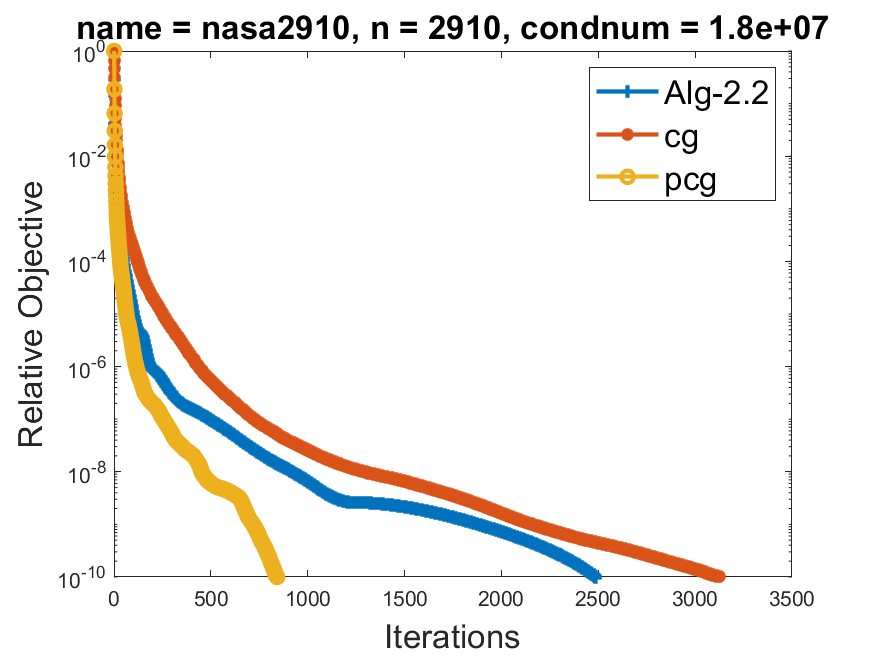}
		\subcaption{nasa2910}
	\end{subfigure}
	\hfill 
	\begin{subfigure}[b]{0.3\textwidth}
		\centering
		\includegraphics[width=\textwidth]{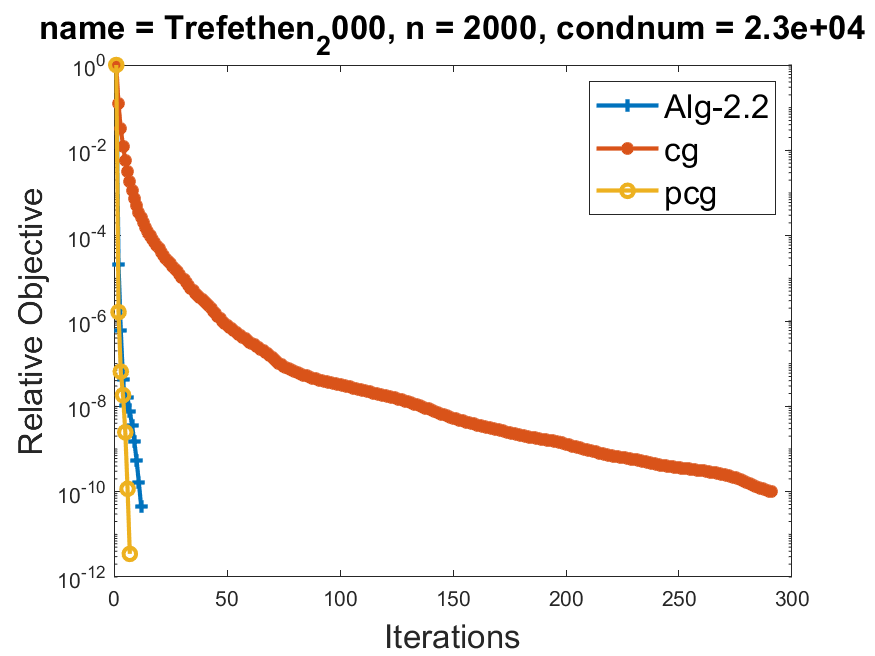}
		\subcaption{Trefethen-2000}
	\end{subfigure}
	\caption{Computational results for {symmetric positive definite} systems collected from the SuiteSparse collection.}
	\label{fig-suitesparse}
\end{figure}

\subsection{Scalability in parallel computing environments} \label{subsection-speedup}

We implement a distributed version of Algorithm \ref{alg-jacobi} using Python and petsc4py \cite{dalcinpazklercosimo2011}. We always consider large sparse matrices in parallel computing environments and assume $p$ processors are available for computation. Matrix $Q$ and vectors $x, y$ are partitioned by rows, and Matrix $J$ is constructed locally and independently. All matrices are stored in CSR format, i.e., the \textit{AIJ} format in petsc4py. The application of the inverse of $J$ in step $4$ of Algorithm \ref{alg-jacobi}
is conducted via $p$ local and independent KSP solvers \cite{freund1992iterative, van2003iterative, saad2001parallel}. The matrix-vector multiplication $Qy$ is well-studied and evaluated via a built-in method \textit{mult} in petsc4py. The parallelization of other components of Algorithm \ref{alg-jacobi} is straightforward.

We test the scalability of the distributed algorithm {with a diagonal preconditioner} on the following six sparse matrices: \texttt{G3\_circuit}, \texttt{bone010}, \texttt{apache2}, \texttt{audikw\_1}, \texttt{StocF-1465}, and \texttt{thermal2}, which are SPD matrices from the {SuiteSparse Matrix Collection}.
Various properties of these matrices used in the test are presented in Table \ref{tb:matrixproperties}. Load imbalance is defined as the ratio of the maximum number of nonzeros assigned to a processor to the average number of nonzeros in each processor: $\dfrac{p \cdot \max_{i}\{{\rm nnz}(Q^{i,:})\}}{{\rm nnz}(Q)}$.

\begin{table}[htb!]
	\centering
	\scalebox{1.0}{
		\begin{tabular}{|c|c|c|c|c|}
			\hline
			sparse matrix & n & avg degree & \#nonzeros & load imb.\\
			\hline
			\texttt{G3\_circuit} & 1.58M & 4.83 & 7.66M & 1.07 \\
			\texttt{bone010} & 987K & 48.5 & 47.9M & 1.02 \\
			\texttt{apache2} & 715K & 6.74 & 4.82M & 1.01 \\
			\texttt{audikw\_1} & 944K & 82.3 & 77.7M & 2.24 \\
			\texttt{StocF-1465} & 1.47M & 14.3 & 21.0M & 1.03 \\
			\texttt{thermal2} & 1.23M & 6.99 & 8.58M & 1.00 \\
			\hline
			
		\end{tabular}
	}
    \caption{Properties of matrices used in scalability test. The values under the ``load imb." column present the load imbalance in terms of the sparse matrix elements for 64 processors.}
	\label{tb:matrixproperties}
\end{table}

Figure \ref{fig-scalability} summarizes the scalability test results where the distributed algorithm is run for 100 iterations. {The cost of the construction of the matrix $J$ is constant regardless of the number of iterations, and the construction cost is minor when the number of iterations is large. Solving local linear equations defined by $J_{i}$ (``solve J'') and  matrix-vector multiplication (``matvec'') are scalable. The restarting technique requires a dot product (``dot'') which is known to be not scalable. Fortunately, the dot product only needs to be computed periodically but not at each every iteration. Consequently, the entire distributed algorithm (``totals'') is scalable until the dot product becomes a scalability bottleneck. Note that this does not indicate the proposed algorithm is not competitive as dot products widely present in competitors including PCG and CG. In fact, Figure \ref{fig-comppcg} shows that the proposed distributed algorithm (``p acc-jacobi'') admits excellent performance in terms of scalability, which is comparable to the parallel PCG (``KSPCG'').}

\begin{figure}[htb!]
	\begin{center}
		\begin{tabular}{ccc}
			\includegraphics[width=0.3\textwidth]{./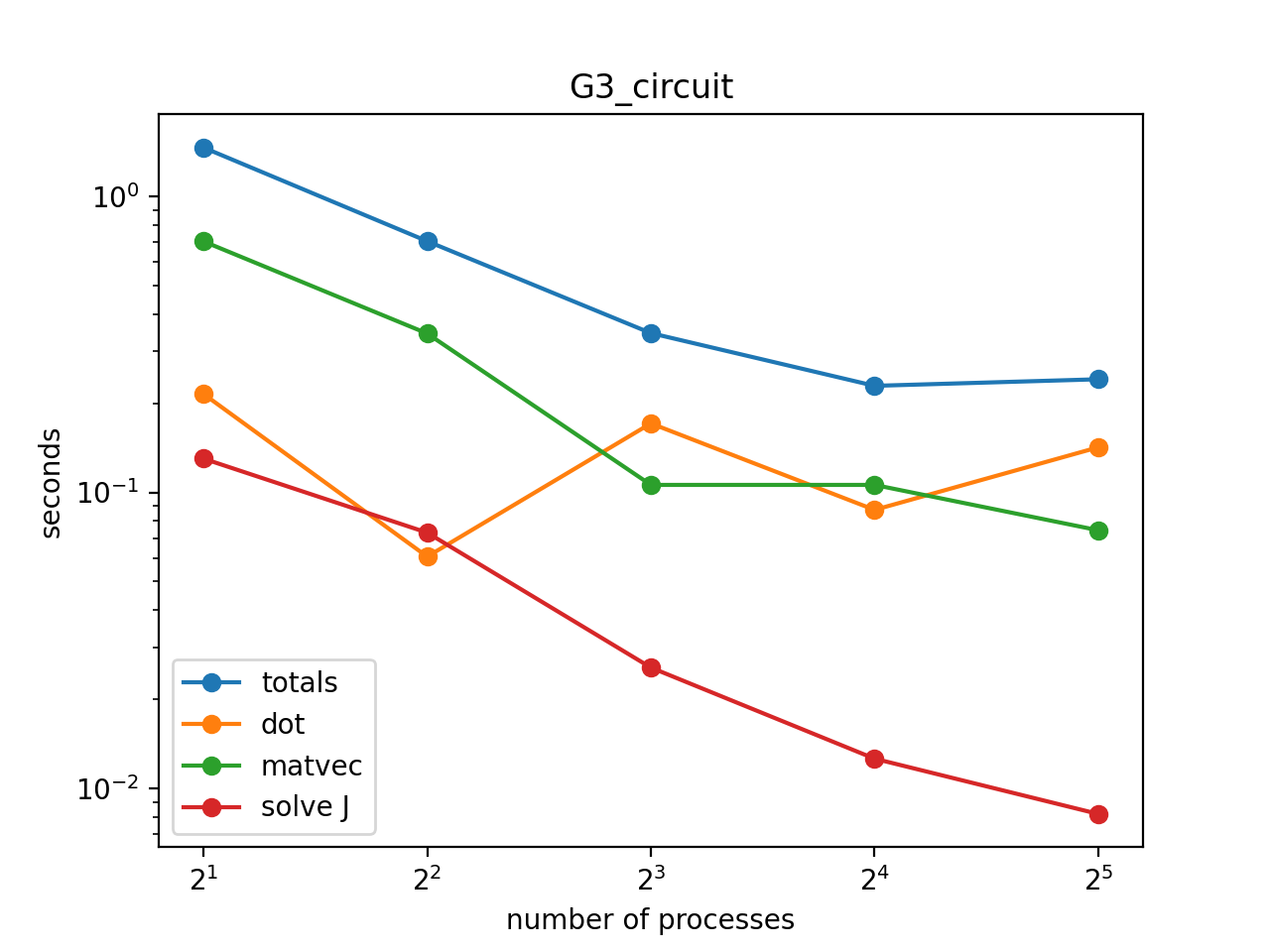}& 
			\includegraphics[width=0.3\textwidth]{./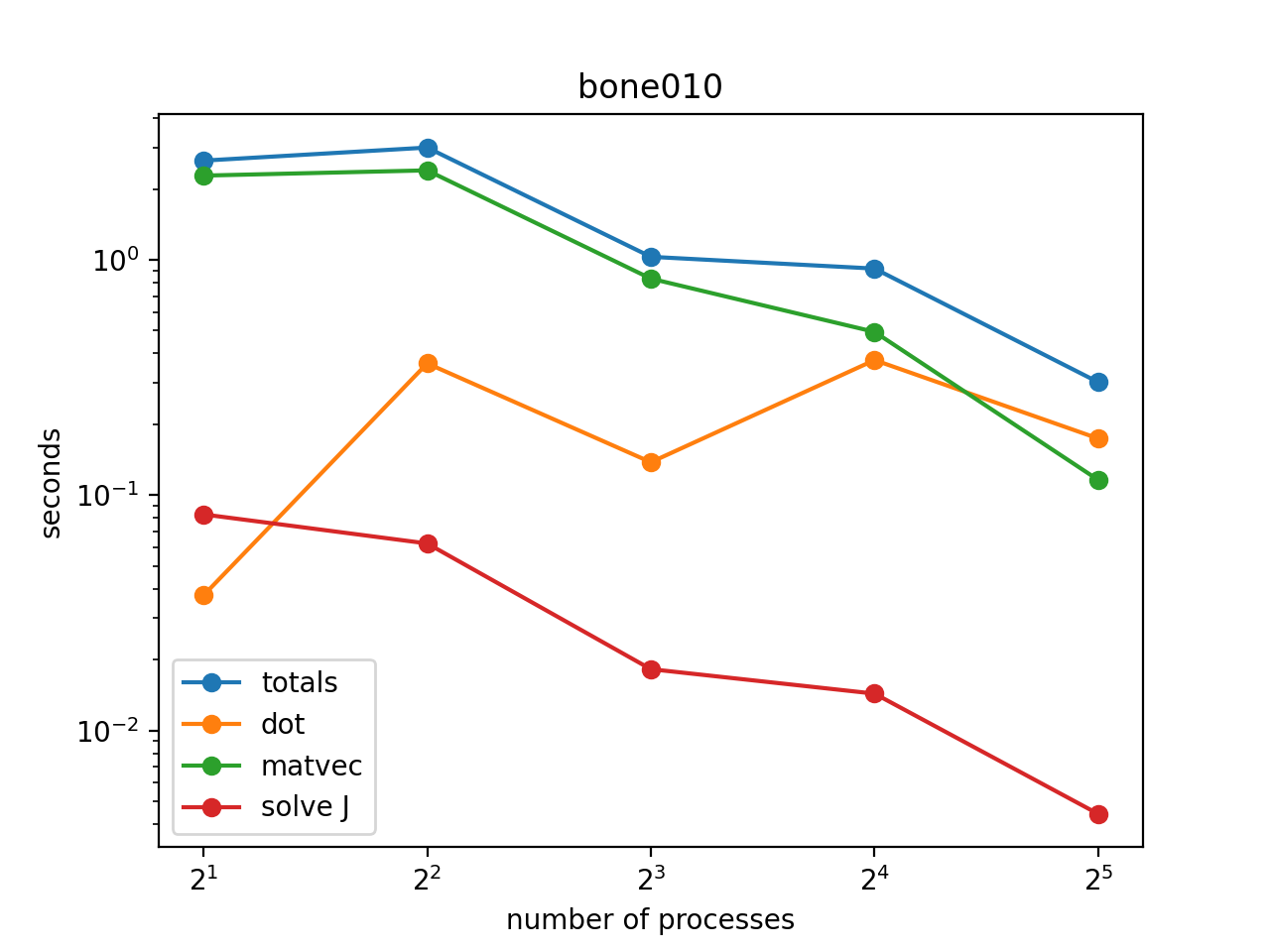} &
			\includegraphics[width=0.3\textwidth]{./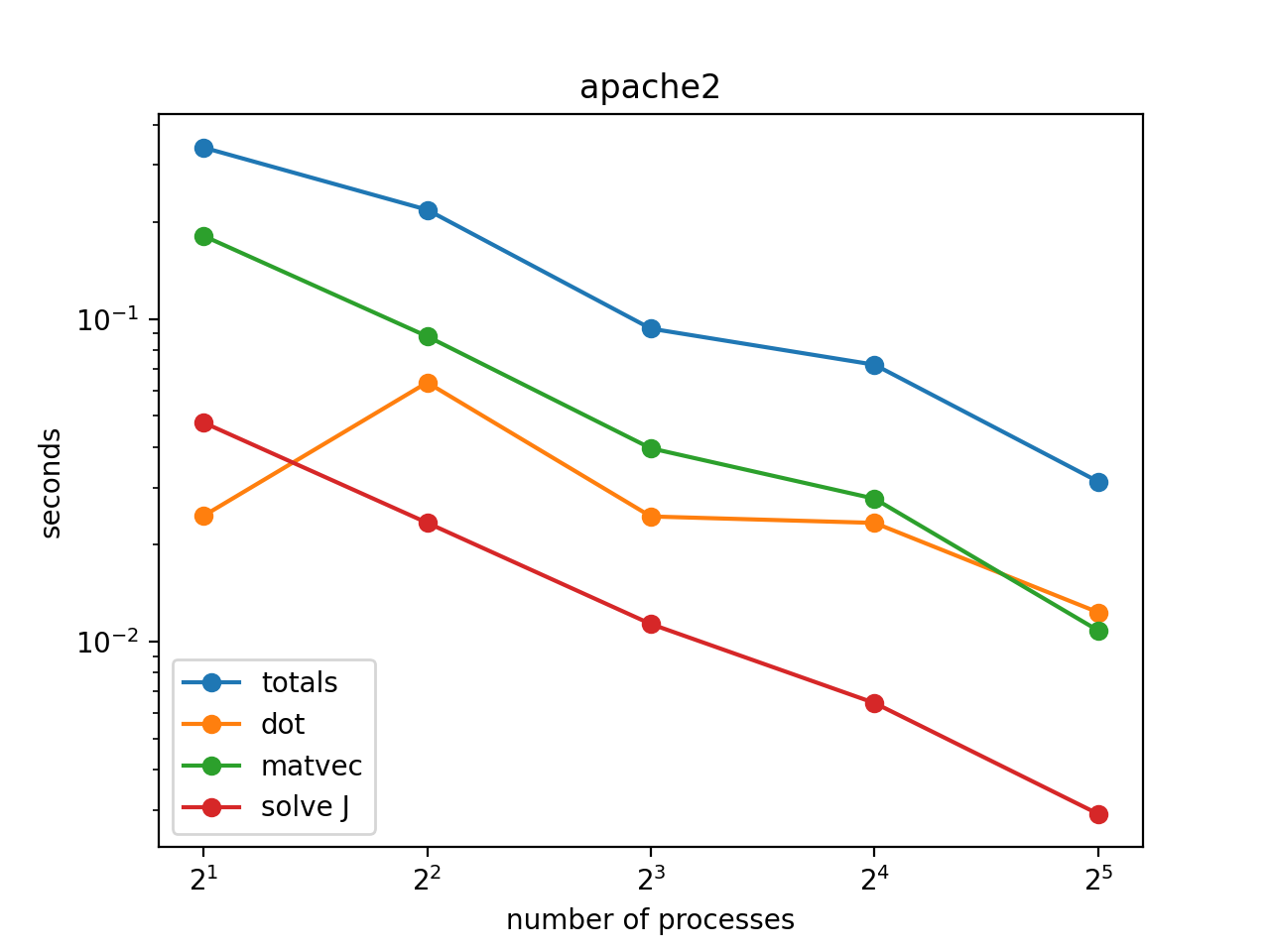} \\ 
			\includegraphics[width=0.3\textwidth]{./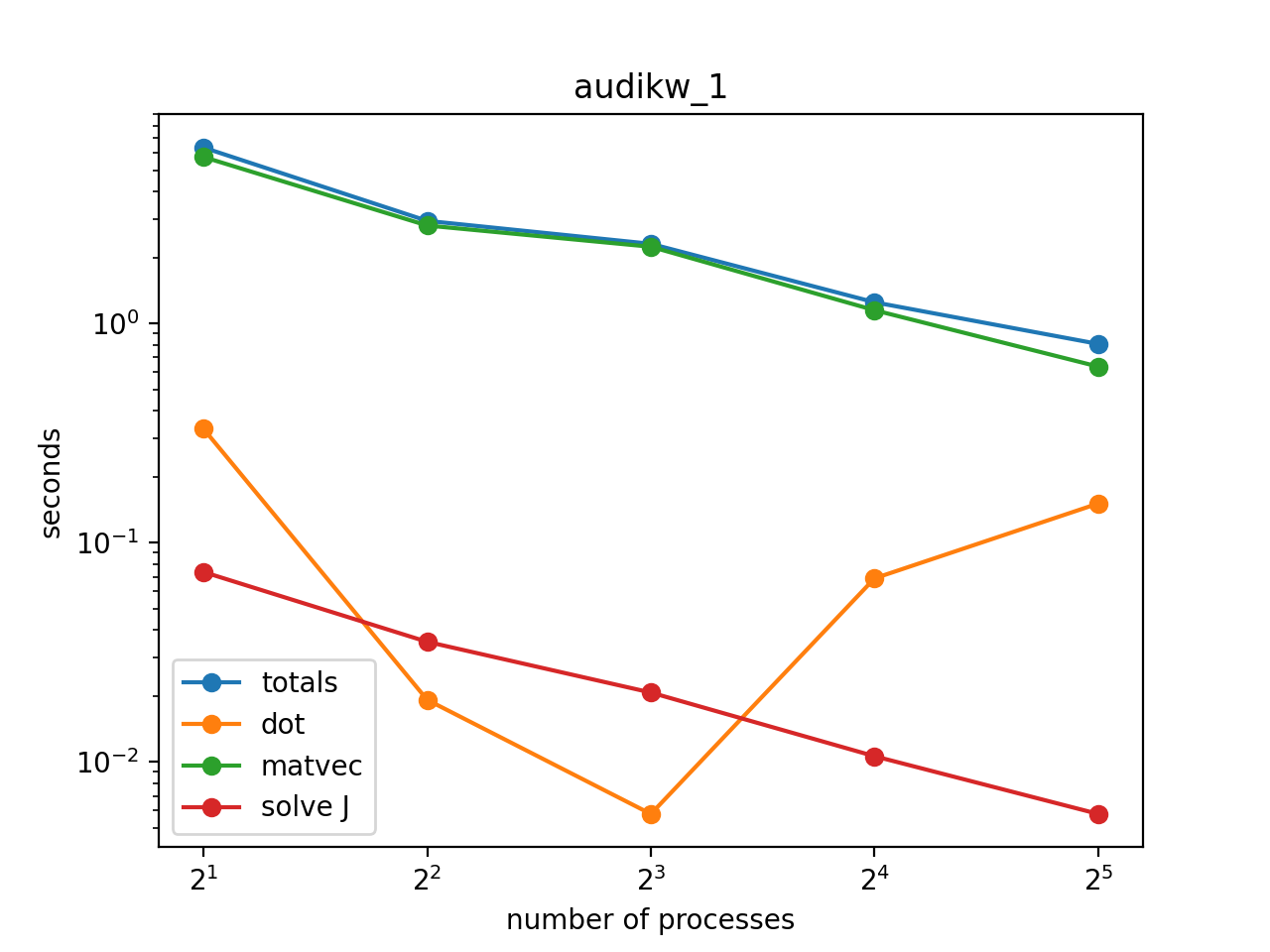} &
			\includegraphics[width=0.3\textwidth]{./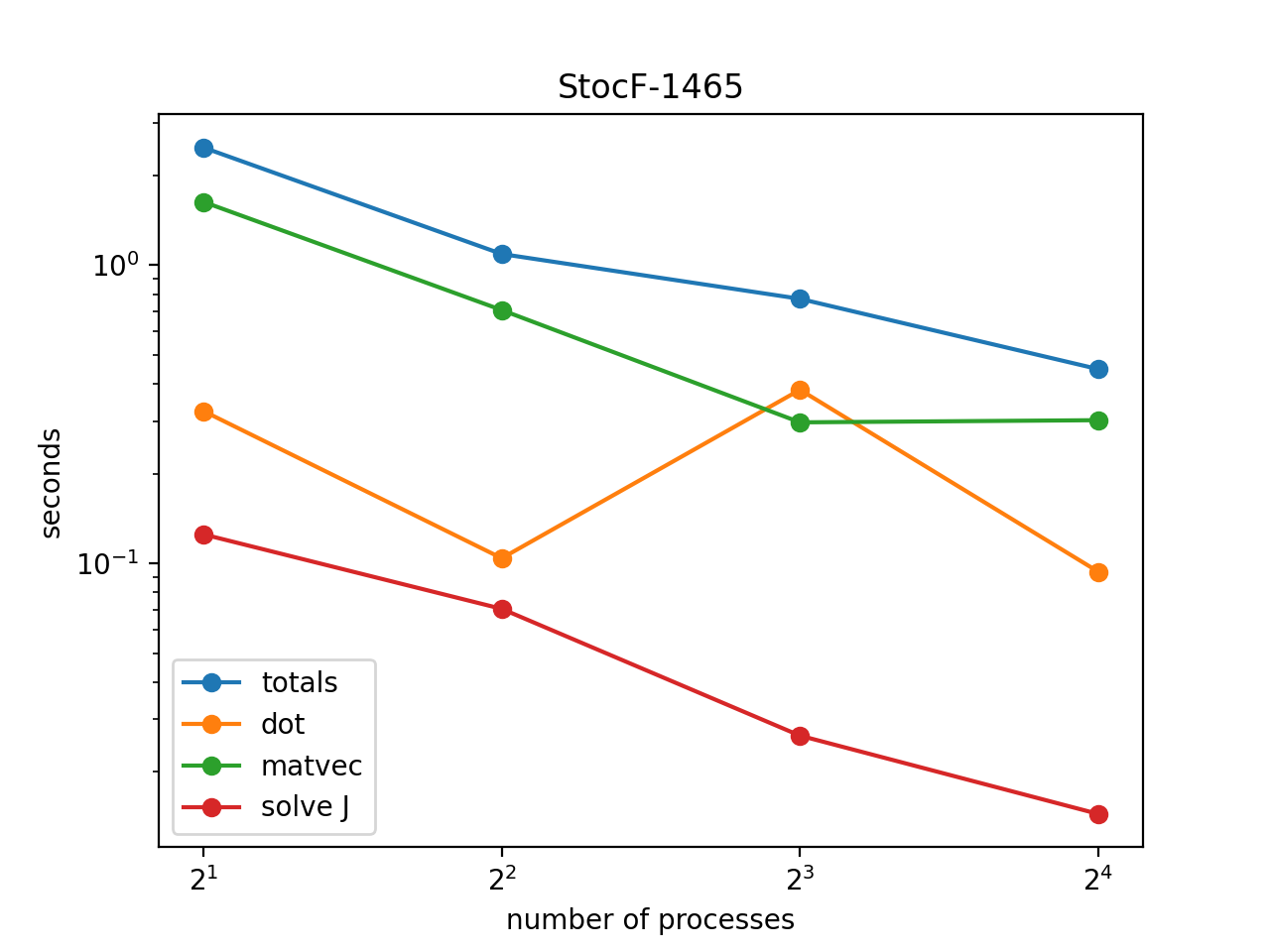}& 
			\includegraphics[width=0.3\textwidth]{./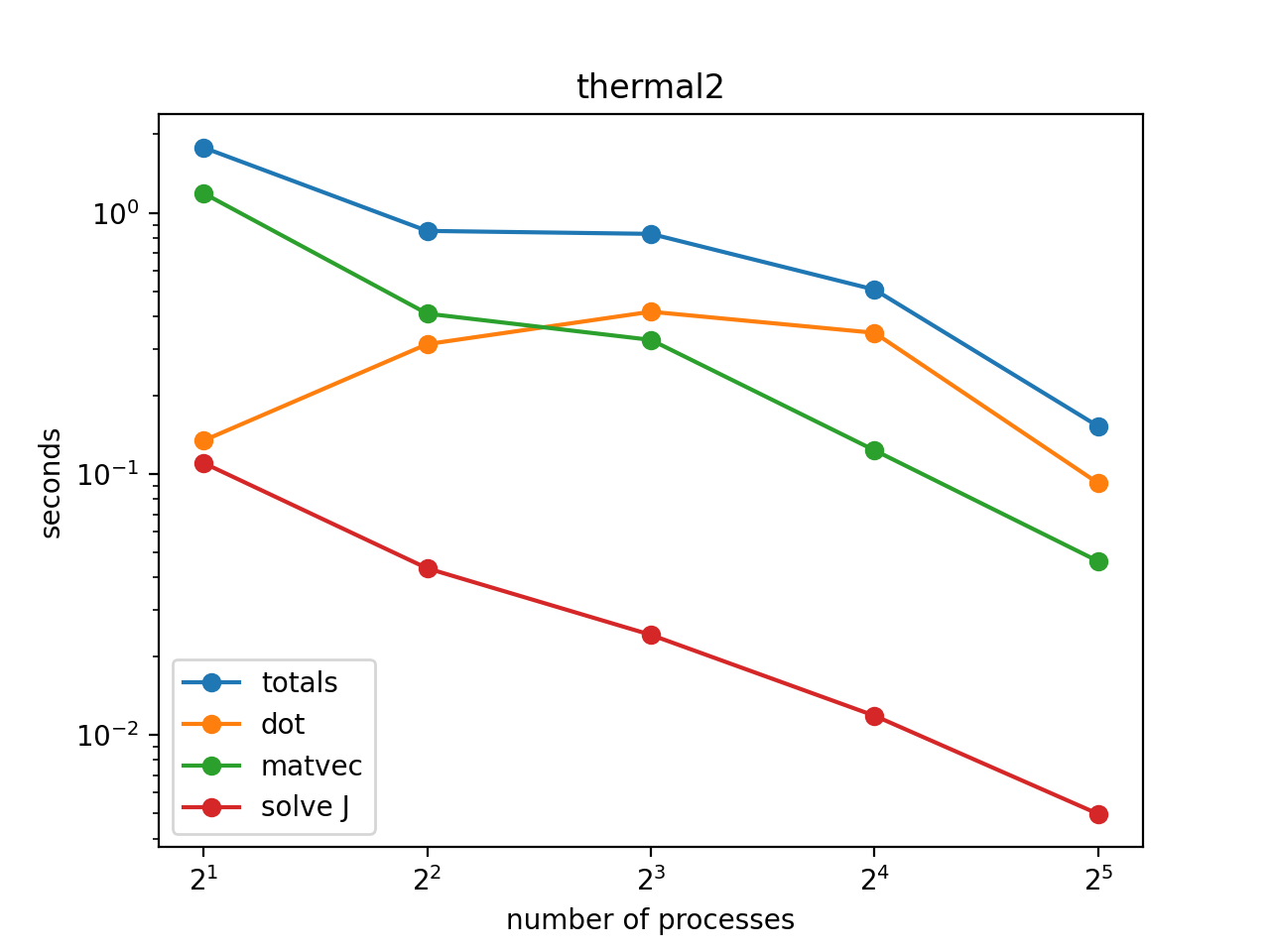} 
		\end{tabular}
	\end{center}
	\caption{Scalability of the distributed version of Algorithm \ref{alg-jacobi} against up to 32 processors. The distributed algorithm is run for $100$ iterations.}
	\label{fig-scalability}
\end{figure}

\begin{figure}[htb!]
	\begin{center}
		\begin{tabular}{ccc}
			\includegraphics[width=0.3\textwidth]{./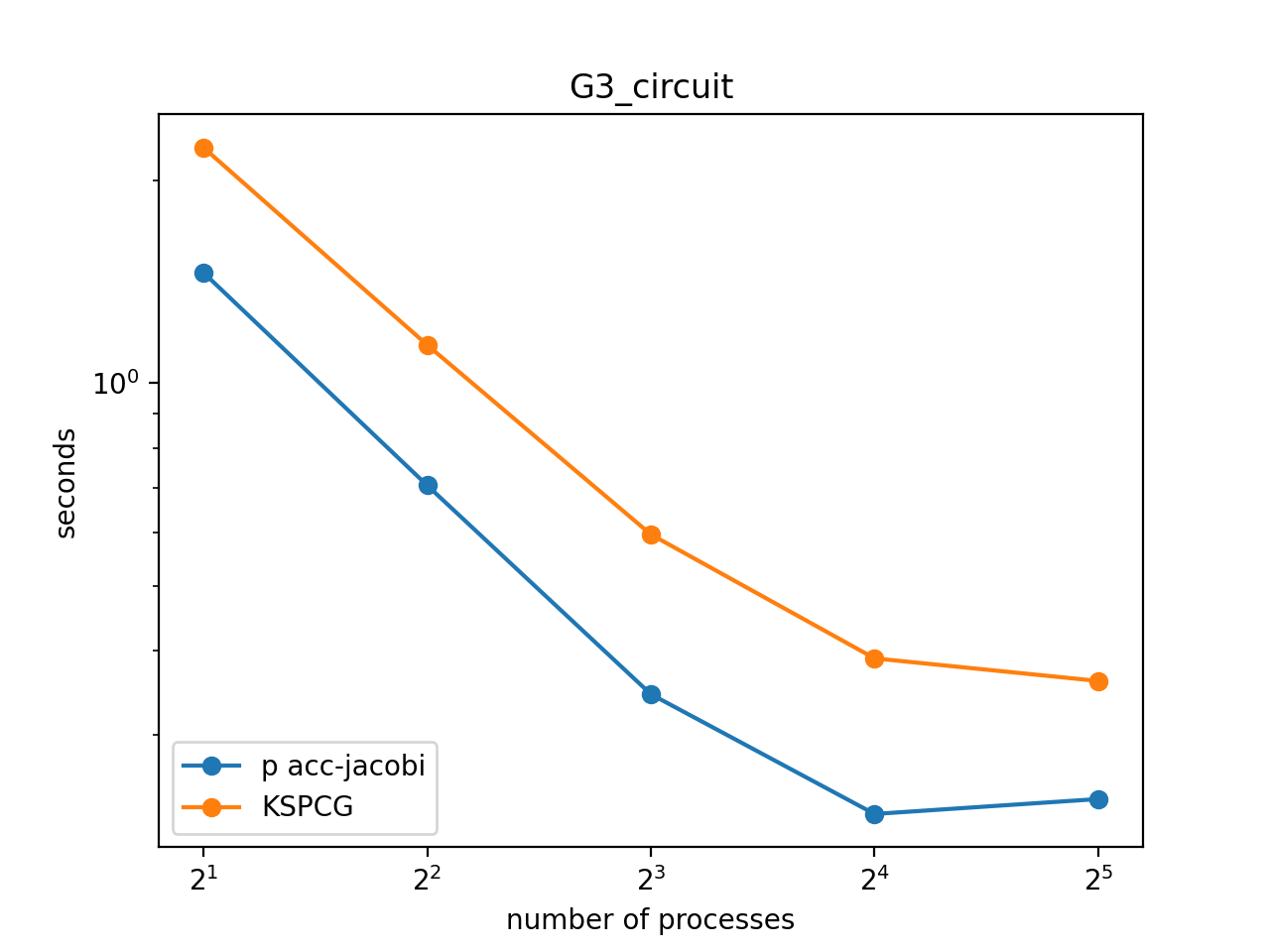}& 
			\includegraphics[width=0.3\textwidth]{./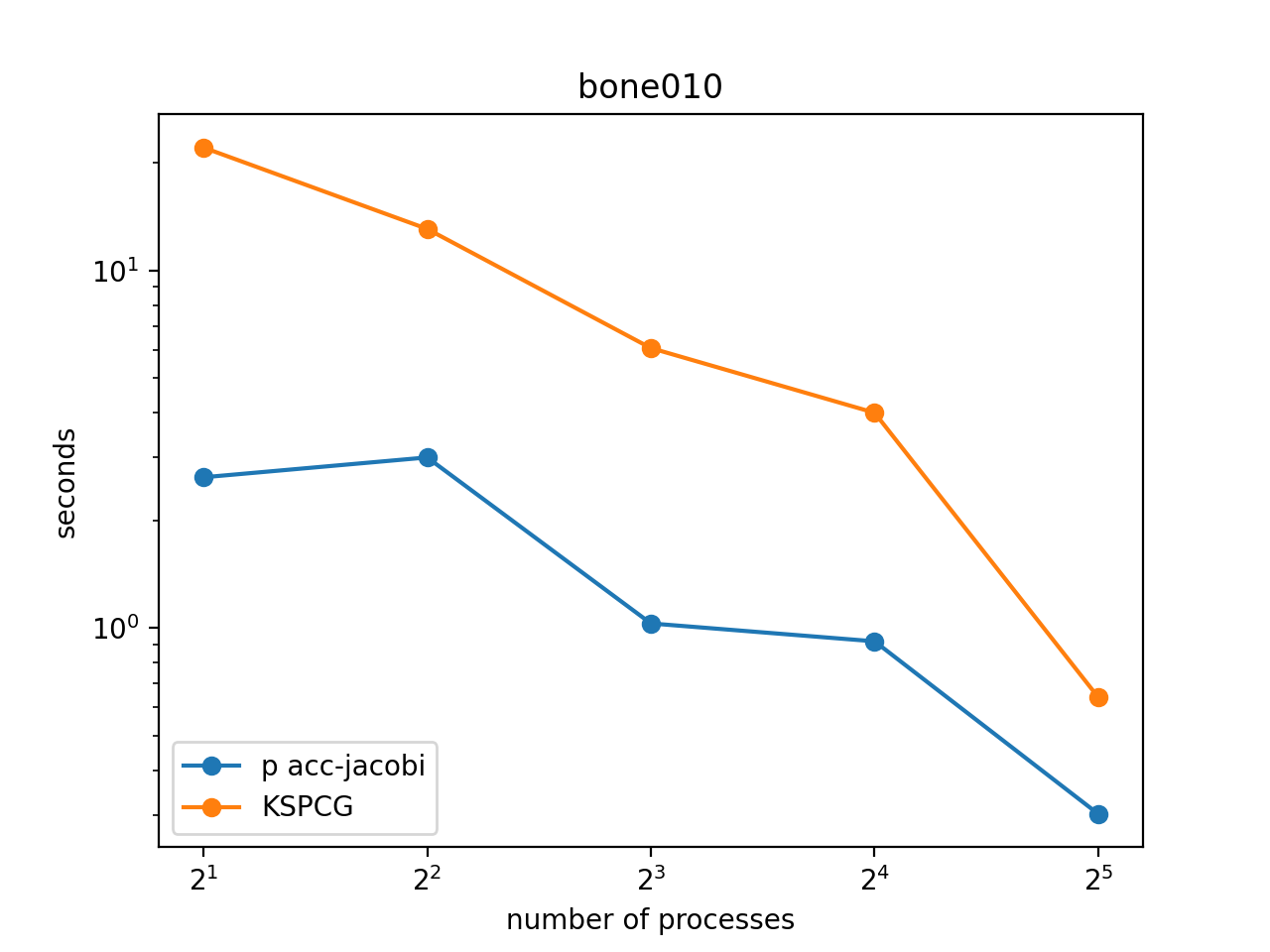} &
			\includegraphics[width=0.3\textwidth]{./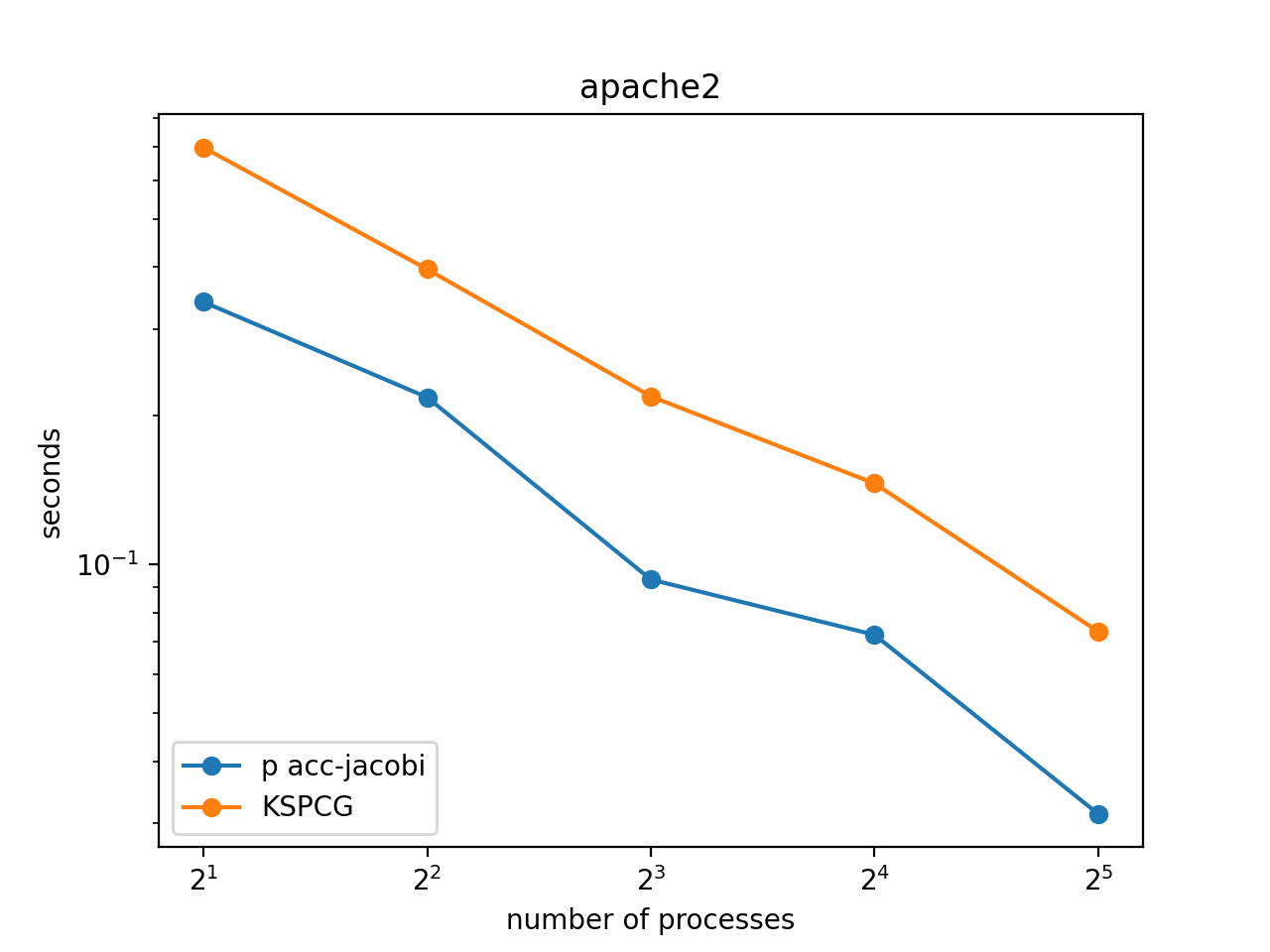} \\ 
			\includegraphics[width=0.3\textwidth]{./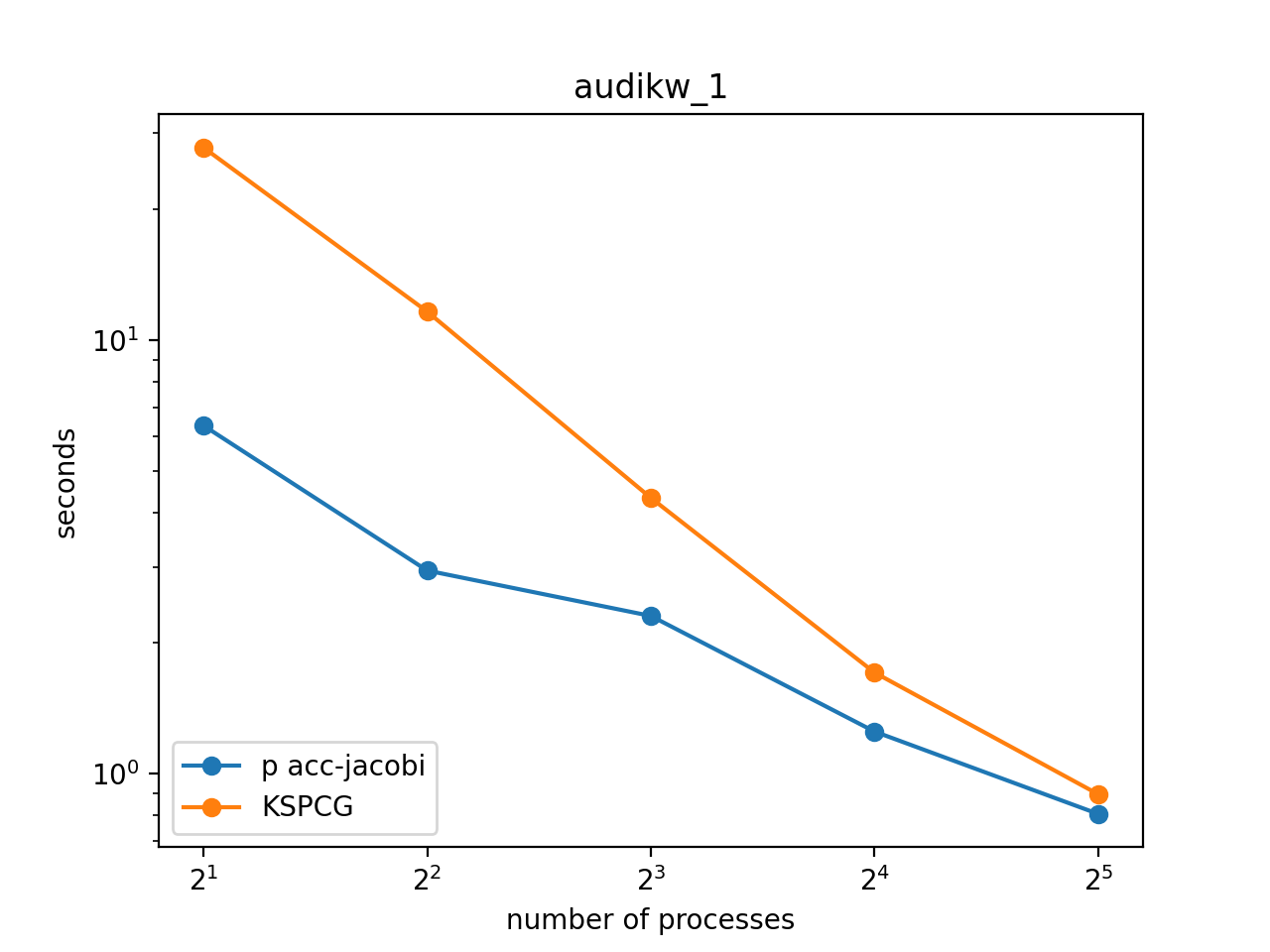} &
			\includegraphics[width=0.3\textwidth]{./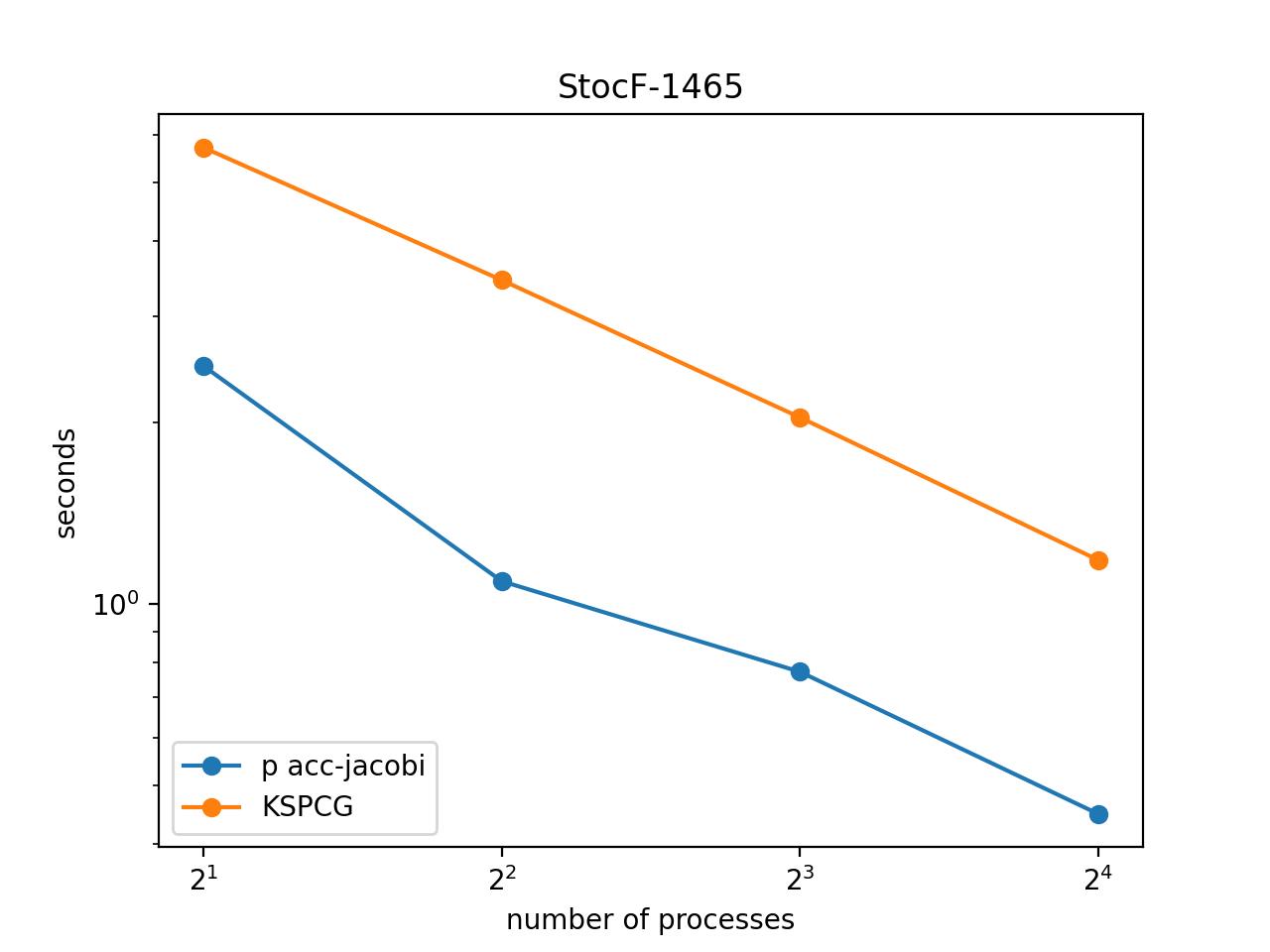}& 
			\includegraphics[width=0.3\textwidth]{./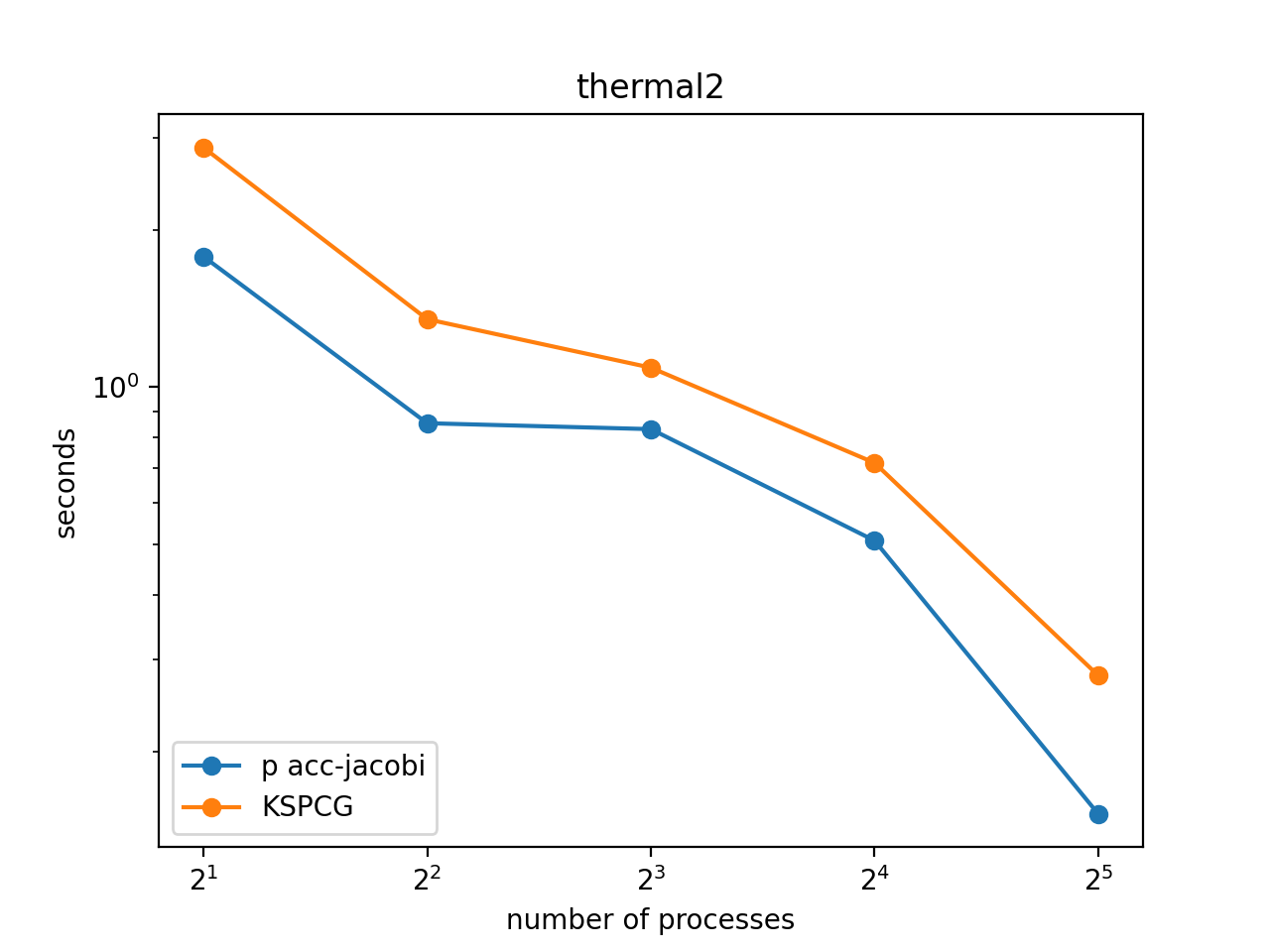} 
		\end{tabular}
	\end{center}
	\caption{Comparisons between the proposed distributed algorithm and pcg with the BJACOBI preconditioner. Both algorithms run for $100$ iterations.}
	\label{fig-comppcg}
\end{figure}

\section{Conclusions} \label{section-conclusions}

In this paper, we have considered solving large-scale symmetric positive semidefinite (SPSD) systems of linear equations via Jacobi-type iterative methods, which improve the classical Jacobi-type iterative methods by applying Nesterov's acceleration technique. The proposed method has been shown to be convergent for any consistent SPSD systems with an $O\left(\frac{1}{t^2}\right)$ convergence rate in terms of objective function values associated with the underlying linear system, provided that the proximal term in the Nesterov's framework is induced by a symmetric positive semidefinite matrix. We have also proposed and analyzed a restarted version of the former method and shown that it has an $O\left(\frac{(\log_2(t))^2}{t^2}\right)$ worst-case convergence rate, under the condition that the matrix $Q$ is positive definite. To evaluate the practical performance of the proposed methods, we have conducted extensive numerical experiments. Our numerical results indicate that the proposed method outperforms the classical Jacobi-type methods and the conjugate gradient method, and shares a comparable performance as the diagonally preconditioned conjugate gradient method. The speed-up tests also provide strong evidence demonstrating that the proposed method is highly scalable. 

\section*{Acknowledgments}
Haizhao Yang was partially supported by the US National Science Foundation under awards DMS-2244988, DMS-2206333, the Office of Naval Research Award N00014-23-1-2007, and the DARPA D24AP00325-00.

{We thank the editor and three anonymous reviewers for providing valuable suggestions, which have helped to improve the quality of the paper.}

\bibliographystyle{siamplain}
\bibliography{references}

\begin{thebibliography}{10}

\bibitem{petsc-user-ref}
{\sc S.~Balay, S.~Abhyankar, M.~F. Adams, S.~Benson, J.~Brown, P.~Brune,
  K.~Buschelman, E.~Constantinescu, L.~Dalcin, A.~Dener, V.~Eijkhout,
  J.~Faibussowitsch, W.~D. Gropp, V.~Hapla, T.~Isaac, P.~Jolivet, D.~Karpeev,
  D.~Kaushik, M.~G. Knepley, F.~Kong, S.~Kruger, D.~A. May, L.~C. McInnes,
  R.~T. Mills, L.~Mitchell, T.~Munson, J.~E. Roman, K.~Rupp, P.~Sanan,
  J.~Sarich, B.~F. Smith, S.~Zampini, H.~Zhang, H.~Zhang, and J.~Zhang}, {\em
  {PETSc/TAO} users manual}, Tech. Report ANL-21/39 - Revision 3.20, Argonne
  National Laboratory, 2023.

\bibitem{petsc-efficient}
{\sc S.~Balay, W.~D. Gropp, L.~C. McInnes, and B.~F. Smith}, {\em Efficient
  management of parallelism in object oriented numerical software libraries},
  in Modern Software Tools in Scientific Computing, E.~Arge, A.~M. Bruaset, and
  H.~P. Langtangen, eds., Birkh{\"{a}}user Press, 1997, pp.~163--202.

\bibitem{beck2009fast}
{\sc A.~Beck and M.~Teboulle}, {\em A fast iterative shrinkage-thresholding
  algorithm for linear inverse problems}, SIAM Journal on Imaging Sciences, 2
  (2009), pp.~183--202.

\bibitem{bertsekas2015parallel}
{\sc D.~Bertsekas and J.~Tsitsiklis}, {\em Parallel and Distributed
  Computation: Numerical Methods}, Athena Scientific, 2015.

\bibitem{bonnans1995family}
{\sc J.~F. Bonnans, J.~C. Gilbert, C.~Lemar{\'e}chal, and C.~A.
  Sagastiz{\'a}bal}, {\em A family of variable metric proximal methods},
  Mathematical Programming, 68 (1995), pp.~15--47.

\bibitem{boyd2004convex}
{\sc S.~P. Boyd and L.~Vandenberghe}, {\em Convex Optimization}, Cambridge
  University Press, 2004.

\bibitem{cantu2000efficient}
{\sc E.~Cantu-Paz}, {\em Efficient and accurate parallel genetic algorithms},
  vol.~1, Springer Science \& Business Media, 2000.

\bibitem{cantu1998survey}
{\sc E.~Cant{\'u}-Paz et~al.}, {\em A survey of parallel genetic algorithms},
  Calculateurs Paralleles, Reseaux et Systems Repartis, 10 (1998),
  pp.~141--171.

\bibitem{chambolle2015convergence}
{\sc A.~Chambolle and C.~H. Dossal}, {\em On the convergence of the iterates of
  ``{FISTA}''}, Journal of Optimization Theory and Applications, 166 (2015),
  p.~25.

\bibitem{chung1997spectral}
{\sc F.~R. Chung}, {\em Spectral graph theory}, vol.~92, American Mathematical
  Soc., 1997.

\bibitem{dalcinpazklercosimo2011}
{\sc L.~D. Dalcin, R.~R. Paz, P.~A. Kler, and A.~Cosimo}, {\em Parallel
  distributed computing using {P}ython}, Advances in Water Resources, 34
  (2011), pp.~1124 -- 1139.
\newblock New Computational Methods and Software Tools.

\bibitem{fogarty1991implementing}
{\sc T.~C. Fogarty and R.~Huang}, {\em Implementing the genetic algorithm on
  transputer based parallel processing systems}, in Parallel Problem Solving
  from Nature: 1st Workshop, PPSN I Dortmund, FRG, October 1--3, 1990
  Proceedings 1, Springer, 1991, pp.~145--149.

\bibitem{freund1992iterative}
{\sc R.~W. Freund, G.~H. Golub, and N.~M. Nachtigal}, {\em Iterative solution
  of linear systems}, Acta Numerica, 1 (1992), pp.~57--100.

\bibitem{golub2013matrix}
{\sc G.~H. Golub and C.~F. Van~Loan}, {\em Matrix Computations}, JHU press,
  2013.

\bibitem{greenbaum1997iterative}
{\sc A.~Greenbaum}, {\em Iterative Methods for Solving Linear Systems}, SIAM,
  1997.

\bibitem{harada2020parallel}
{\sc T.~Harada and E.~Alba}, {\em Parallel genetic algorithms: a useful
  survey}, ACM Computing Surveys (CSUR), 53 (2020), pp.~1--39.

\bibitem{hinze2008optimization}
{\sc M.~Hinze, R.~Pinnau, M.~Ulbrich, and S.~Ulbrich}, {\em Optimization with
  {PDE} constraints}, vol.~23, Springer Science \& Business Media, 2008.

\bibitem{imakura2012auto}
{\sc A.~Imakura, T.~Sakurai, K.~Sumiyoshi, and H.~Matsufuru}, {\em An
  auto-tuning technique of the weighted {J}acobi-type iteration used for
  preconditioners of {K}rylov subspace methods}, in 2012 IEEE 6th International
  Symposium on Embedded Multicore SoCs, IEEE, 2012, pp.~183--190.

\bibitem{incropera1996fundamentals}
{\sc F.~P. Incropera, D.~P. DeWitt, T.~L. Bergman, A.~S. Lavine, et~al.}, {\em
  Fundamentals of Heat and Mass Transfer}, vol.~6, Wiley New York, 1996.

\bibitem{ito2017unified}
{\sc N.~Ito, A.~Takeda, and K.-C. Toh}, {\em A unified formulation and fast
  accelerated proximal gradient method for classification}, The Journal of
  Machine Learning Research, 18 (2017), pp.~510--558.

\bibitem{jacobi1846leichtes}
{\sc C.~G.~J. Jacobi}, {\em {\"U}ber ein leichtes verfahren die in der theorie
  der s{\"a}cularst{\"o}rungen vorkommenden gleichungen numerisch
  aufzul{\"o}sen*},  (1846).

\bibitem{jensen2024nonsmooth}
{\sc B.~Jensen and T.~Valkonen}, {\em A nonsmooth primal-dual method with
  interwoven {PDE} constraint solver}, Computational Optimization and
  Applications,  (2024), pp.~1--35.

\bibitem{johnson2012numerical}
{\sc C.~Johnson}, {\em Numerical solution of partial differential equations by
  the finite element method}, Courier Corporation, 2012.

\bibitem{karczmarz1937angenaherte}
{\sc S.~Karczmarz}, {\em Angenaherte auflosung von systemen linearer
  glei-chungen}, Bull. Int. Acad. Pol. Sic. Let., Cl. Sci. Math. Nat.,  (1937),
  pp.~355--357.

\bibitem{kimmel2019extensions}
{\sc G.~J. Kimmel and A.~Glatz}, {\em Extensions and analysis of worst-case
  parameter in weighted {J}acobi's method for solving second order implicit
  {PDEs}}, Results in Applied Mathematics, 1 (2019), p.~100003.

\bibitem{lee2022escaping}
{\sc C.-p. Lee, L.~Liang, T.~Tang, and K.-C. Toh}, {\em Accelerating
  nuclear-norm regularized low-rank matrix optimization through
  {Burer-Monteiro} decomposition}, Journal of Machine Learning Research, 25
  (2024), pp.~1--52.

\bibitem{li2019block}
{\sc X.~Li, D.~Sun, and K.-C. Toh}, {\em A block symmetric {Gauss--Seidel}
  decomposition theorem for convex composite quadratic programming and its
  applications}, Mathematical Programming, 175 (2019), pp.~395--418.

\bibitem{li2005overview}
{\sc X.~S. Li}, {\em An overview of {SuperLU}: {A}lgorithms, implementation,
  and user interface}, ACM Transactions on Mathematical Software (TOMS), 31
  (2005), pp.~302--325.

\bibitem{liang2022qppal}
{\sc L.~Liang, X.~Li, D.~Sun, and K.-C. Toh}, {\em {QPPAL}: {A} two-phase
  proximal augmented {L}agrangian method for high-dimensional convex quadratic
  programming problems}, ACM Transactions on Mathematical Software (TOMS), 48
  (2022), pp.~1--27.

\bibitem{monteiro2016adaptive}
{\sc R.~D. Monteiro, C.~Ortiz, and B.~F. Svaiter}, {\em An adaptive accelerated
  first-order method for convex optimization}, Computational Optimization and
  Applications, 64 (2016), pp.~31--73.

\bibitem{nesterov2013gradient}
{\sc Y.~Nesterov}, {\em Gradient methods for minimizing composite functions},
  Mathematical Programming, 140 (2013), pp.~125--161.

\bibitem{nesterov2018lectures}
{\sc Y.~Nesterov et~al.}, {\em Lectures on Convex Optimization}, vol.~137,
  Springer, 2018.

\bibitem{ng2001spectral}
{\sc A.~Ng, M.~Jordan, and Y.~Weiss}, {\em On spectral clustering: Analysis and
  an algorithm}, Advances in Neural Information Processing Systems, 14 (2001).

\bibitem{o2015adaptive}
{\sc B.~O’donoghue and E.~Candes}, {\em Adaptive restart for accelerated
  gradient schemes}, Foundations of Computational Mathematics, 15 (2015),
  pp.~715--732.

\bibitem{pang2023spectral}
{\sc Q.~Pang and H.~Yang}, {\em Spectral clustering via orthogonalization-free
  methods}, arXiv preprint arXiv:2305.10356,  (2023).

\bibitem{pang2024distributed}
{\sc Q.~Pang and H.~Yang}, {\em A distributed block {Chebyshev-Davidson}
  algorithm for parallel spectral clustering}, Journal of Scientific Computing,
  98 (2024), p.~69.

\bibitem{saad2001parallel}
{\sc Y.~Saad}, {\em Parallel iterative methods for sparse linear systems}, in
  Studies in Computational Mathematics, vol.~8, Elsevier, 2001, pp.~423--440.

\bibitem{saad2003iterative}
{\sc Y.~Saad}, {\em Iterative Methods for Sparse Linear Systems}, SIAM, 2003.

\bibitem{sadiku2001electromagnetics}
{\sc M.~N. Sadiku}, {\em Electromagnetics}, 2001.

\bibitem{schenk2001pardiso}
{\sc O.~Schenk, K.~G{\"a}rtner, W.~Fichtner, and A.~Stricker}, {\em Pardiso: a
  high-performance serial and parallel sparse linear solver in semiconductor
  device simulation}, Future Generation Computer Systems, 18 (2001),
  pp.~69--78.

\bibitem{shi2000normalized}
{\sc J.~Shi and J.~Malik}, {\em Normalized cuts and image segmentation}, IEEE
  Transactions on Pattern Analysis and Machine Intelligence, 22 (2000),
  pp.~888--905.

\bibitem{strohmer2009randomized}
{\sc T.~Strohmer and R.~Vershynin}, {\em A randomized kaczmarz algorithm with
  exponential convergence}, Journal of Fourier Analysis and Applications, 15
  (2009), pp.~262--278.

\bibitem{su2014differential}
{\sc W.~Su, S.~Boyd, and E.~Candes}, {\em A differential equation for modeling
  {N}esterov’s accelerated gradient method: theory and insights}, Advances in
  Neural Information Processing Systems, 27 (2014).

\bibitem{tanese1987parallel}
{\sc R.~Tanese}, {\em Parallel genetic algorithm for a hypercube}, in Genetic
  algorithms and their applications: proceedings of the second International
  Conference on Genetic Algorithms: July 28-31, 1987 at the Massachusetts
  Institute of Technology, Cambridge, MA, 1987.

\bibitem{tu2022linear}
{\sc Y.~Tu, Q.~Pang, H.~Yang, and Z.~Xu}, {\em Linear-scaling selected
  inversion based on hierarchical interpolative factorization for self
  {G}reen's function for modified {P}oisson-{B}oltzmann equation in two
  dimensions}, Journal of Computational Physics, 461 (2022), p.~110893.

\bibitem{valkonen2020inertial}
{\sc T.~Valkonen}, {\em Inertial, corrected, primal-dual proximal splitting},
  SIAM Journal on Optimization, 30 (2020), pp.~1391--1420.

\bibitem{van2003iterative}
{\sc H.~A. Van~der Vorst}, {\em Iterative {K}rylov Methods for Large Linear
  Systems}, no.~13, Cambridge University Press, 2003.

\bibitem{yang2023inexact}
{\sc H.~Yang, L.~Liang, L.~Carlone, and K.-C. Toh}, {\em An inexact projected
  gradient method with rounding and lifting by nonlinear programming for
  solving rank-one semidefinite relaxation of polynomial optimization},
  Mathematical Programming, 201 (2023), pp.~409--472.

\bibitem{young2014iterative}
{\sc D.~M. Young}, {\em Iterative Solution of Large Linear Systems}, Elsevier,
  2014.

\bibitem{zhang2009cuda}
{\sc Z.~Zhang, Q.~Miao, and Y.~Wang}, {\em {CUDA}-based {J}acobi's iterative
  method}, in 2009 International Forum on Computer Science-Technology and
  Applications, vol.~1, IEEE, 2009, pp.~259--262.

\bibitem{zienkiewicz2005finite}
{\sc O.~C. Zienkiewicz and R.~L. Taylor}, {\em The finite element method for
  solid and structural mechanics}, Elsevier, 2005.

\end{thebibliography}
\end{document}